\pdfoutput=1
\documentclass[reqno]{amsart}

\usepackage{amsmath,amssymb,amsthm,verbatim, csquotes,mathrsfs,stmaryrd}
\usepackage{hyperref}
\usepackage{xcolor}
\usepackage{esint}
\usepackage{mathtools}
\usepackage{graphicx}
\usepackage{float}
\usepackage{caption}
\mathtoolsset{showonlyrefs}

\usepackage[top=3cm, bottom=4.0cm, right=2.5cm, left=2.5cm]{geometry}

\newtheorem{theorem}{Theorem}[section]
\newtheorem{lemma}[theorem]{Lemma}
\newtheorem{proposition}[theorem]{Proposition}
\newtheorem{definition}[theorem]{Definition}
\newtheorem{corollary}[theorem]{Corollary}
\newtheorem{remark}[theorem]{Remark}



\setcounter{secnumdepth}{3}
\setcounter{tocdepth}{3}

\def\om{\omega}
\def\Om{\Omega}
\def\p{\partial}

\def\de{\delta}
\def\De{\Delta}

\def\S{{\Sigma}}
\def\<{\langle}
\def\>{\rangle}
\def\div{{\rm div}}
\def\na{\nabla}

\providecommand{\abs}[1]{\lvert#1\rvert}
\providecommand{\Abs}[1]{\left\lvert#1\right\rvert}

\providecommand{\norm}[1]{\lVert#1\rVert}


\newcommand{\mbN}{\mathbb{N}}

\newcommand{\mbR}{\mathbb{R}}
\newcommand{\mbS}{\mathbb{S}}

\newcommand{\mcA}{\mathcal{A}}
\newcommand{\mcB}{\mathcal{B}}

\newcommand{\mcH}{\mathcal{H}}


\newcommand{\mfc}{\mathbf{c}}

\newcommand{\mfC}{\mathbf{C}}



\newcommand{\rd}{{\rm d}}

\newcommand{\ra}{\rightarrow}

\newcommand{\eq}[1]{\begin{equation}\begin{alignedat}{2} #1 \end{alignedat}\end{equation}}

\numberwithin{equation} {section}

\begin{document}

	
\title[Anisotropic minimal graph]{A half-space Liouville theorem for anisotropic minimal graph with free boundary}
\date{\today}

\author[Wang]{Guofang Wang}
\address[G.W]{Mathematisches Institut\\
	Universit\"at Freiburg\\
	Ernst-Zermelo-Str.1\\
	79104\\
	\newline Freiburg\\ Germany}
\email{guofang.wang@math.uni-freiburg.de}

\author[Wei]{Wei Wei}
\address[W.W]{School of Mathematics\\ Nanjing University\\ 210093\\Nanjing\\ P.R. China}
\email{wei\_wei@nju.edu.cn}

\author[Xia]{Chao Xia}
\address[C.X]{School of Mathematical Sciences\\
	Xiamen University\\
	361005, Xiamen, P.R. China}
\email{chaoxia@xmu.edu.cn}

\author[Zhang]{Xuwen Zhang}
\address[X.Z]{Mathematisches Institut\\
	Universit\"at Freiburg\\
	Ernst-Zermelo-Str.1\\
	79104\\
	\newline Freiburg\\ Germany}
\email{xuwen.zhang@math.uni-freiburg.de}

\begin{abstract}
In this paper we prove the following Liouville-type theorem: any anisotropic minimal graph with free boundary in the half-space must be flat, provided that  the graph function has at most one-sided linear growth. This extends the classical results of Bombieri-De Giorgi-Miranda \cite{BDeGM69} and Simon \cite{Simon76} to an appropriate free boundary setting.

\

\noindent {\bf MSC 2020: 53A10,  35J93, 35J25}\\
{\bf Keywords:} Anisotropic minimal graph, free boundary condition, Liouville theorem\\
\end{abstract}

\maketitle

\section{Introduction}
Let $F:\mbR^{n+1}\ra\mbR_+$ be a uniformly elliptic integrand.
Namely, $F$ is a one-homogeneous function on $\mbR^{n+1}$ which is positive and   $C^2$ on $\mbS^{n}$, satisfying that $\{F<1\}$ is uniformly convex.
For any orientable $C^2$-hypersurface $\S\subset\mbR^{n+1}$, its \textit{anisotropic area} is given by
\eq{\label{defn:anisotropic-area-hypersurface}
\mcA_F(\S)
\coloneqq\int_\S F(\nu)\rd\mcH^n,
}
where $\nu$ is a unit normal vector field of $\S$. If a hypersurface $\S$ is a critical point of $\mcA_F$, then  $\S$ is called an \textit{anisotropic minimal hypersurface} in $\mbR^{n+1}$. When $F(\xi)=F_{eucl}(\xi)\coloneqq\abs{\xi}$, $\mcA_F$ is the standard area and its critical points are minimal hypersurfaces.
The anisotropic minimal hypersurfaces have received considerable attention, not only because they are natural generalizations of the classical minimal surfaces, but also because they play essential roles in many applied sciences, for example, in the study of crystalline structure of the material, see e.g., \cite{DePDeGG18,FMP10,FM11,DeRT22,DePDeR24,DePDeRL24}.

Of particular interest is the case that $\S$ is a graph of some function $u$ over a domain $\Om\subset\mbR^n$.
More precisely, let $u$ be a $C^2$-function defined on $\Om$. Its graph given by
$\S\coloneqq\left\{(x,u(x)):x\in\Om\right\}$ is a $C^2$ embedded hypersurface in $\Om\times\mbR$.
The upwards-pointing unit normal to $\S$, viewed as a vector field defined on $\Om$, is given by
\eq{
\nu(x)
=\frac{(-Du(x),1)}{\sqrt{1+\abs{Du(x)}^2}},\quad x\in\Om,
}
where $D$ is the Euclidean gradient on $\mbR^n$.
In this case, due to the homogeneity of $F$ 
one can rewrite \eqref{defn:anisotropic-area-hypersurface} as (with orientation always given by the upwards-pointing unit normal)
\eq{\label{defn:mathscr-A}
\mathscr{A}_F(u)
=\int_{\Om}f(Du(x))\rd x
}
where
\eq{\label{defn:f}
f(y)
\coloneqq F(-y,1),\quad\forall y\in\mbR^n.
}
If $u$ is a critical point of $\mathscr{A}_F$, 
then it is direct to check that $u$ solves the Euler-Lagrange equation
\eq{\label{defn:AMSE}
{\rm div}\left(Df(Du(x))\right)=0
\quad\text{on }\Om,
}
where ${\rm div}$ denotes the divergence operator on $\mbR^n$.
Equation \eqref{defn:AMSE}, which will be referred to as the \textit{anisotropic minimal surface equation} (AMSE), is a non-uniformly elliptic equation.
In particular, when $F=F_{eucl}$, it recovers the well-known \textit{minimal surface equation} (MSE)
\eq{\label{defn:MSE}
{\rm div}\left(\frac{Du}{\sqrt{1+\abs{Du}^2}}\right)
=0.
}
When $\Omega=\mathbb{R}^n$,  the Bernstein problem asks whether a solution to MSE \eqref{defn:MSE} over $\mbR^n$ is an affine function. This has been solved affirmatively for $n\le 7$ by Bernstein \cite{Bernstein27}, Fleming\cite{Fleming62}, Almgren\cite{Almgren66}, De Giorgi \cite{DeGiorgi65} and Simons \cite{Simons68} and negatively by Bombieri-de Giorgi-Giusti \cite{BDG69}.  
When $F$ is a general elliptic integrand, the anisotropic Bernstein problem, which asks a similar problem for AMSE \eqref{defn:AMSE}, has been completely solved affirmatively for $n=2, 3$ by Jenkins \cite{Jenkins61} and Simon \cite{Simon77}, and negatively for $n\ge 4$ by
Mooney \cite{Mooney22} and Mooney-Yang \cite{MY24}.
On the other hand, a Liouville-type theorem holds for any dimensions by Bombieri-De Giorgi-Miranda \cite{BDeGM69} and Simon \cite{Simon76}, which says that a solution to AMSE \eqref{defn:MSE} over $\mbR^n$ with an additional assumption of at most one-sided linear growth must be affine.
For stable (anisotropic) Bernstein-type problems, we refer to \cite{SSY75,dCP79,F-C-S80,Pogorelov81,Lin90,White91,CM02,Winklmann05-CV, CL23, CLMS24, Mazet24,Bellettini25, LX25} for old results and recent developments.

For the case when $\Om$ has non-empty boundary, Du-Mooney-Yang-Zhu \cite{DMYZ23} recently studied a Dirichlet boundary value problem to AMSE \eqref{defn:AMSE} and proved that, for convex domain $\Om$, if in addition 
\eq{\label{defn:bdry-DMYZ}
u\mid_{\p\Om}\text{ agrees with a linear function }L,
}
then $u$ must be affine, which extends Edelen-Wang's theorem \cite{EW22} to the anisotropic setting.
Note that such $u$ arises as a critical point of \eqref{defn:mathscr-A} with a specific Dirichlet boundary condition \eqref{defn:bdry-DMYZ}.
It is interesting to see  that this result holds for any dimensions in contrast to the anisotropic Bernstein problem mentioned above.

In this paper, we are interested in AMSE \eqref{defn:AMSE} over
\eq{
\Om=\mbR^n_+=\{x\in\mbR^n:x_1>0\}
}
with a natural boundary condition, which leads to a free boundary type condition.
In view of the variational formula
\eq{
\label{vari_formula1}\frac\rd {\rd t}|_{t=0} \mathscr{A}_F(u+t\xi)=
\int_{\mbR^n_+}\left<Df(Du),D\xi\right>
=-\int_{\mbR^n_+}\div\left(Df(Du)\right)\xi -\int_{\partial \mbR^n_+} \langle Df (Du), e_1\rangle\xi ,
}
where $e_1=(1,0,\ldots,0)$ is the unit inward normal of $\mbR^n_+$ along $\p\mbR^n_+$ and $\xi$ is any $C^1$-function on $\mbR^n_+$,
the natural variational boundary condition is thus
\eq{\label{defn:anisotropic-free-bdry-Intro}
\left<Df(Du(x)),e_1\right>=0,\quad\forall x\in\p\mbR^n_+.
}
From the geometric point of view, it is easy to check that with \eqref{defn:AMSE} and \eqref{defn:anisotropic-free-bdry-Intro}, the graph $\S$ is an {anisotropic minimal hypersurface in $\mbR^{n+1}_+=\{x\in\mbR^{n+1}:x_1>0\}$}, such that along $\p\S\subset\p\mbR^{n+1}_+$, the anisotropic normal of $\S$, given by $\nu_F\coloneqq\bar DF(\nu)$, is perpendicular to the support $\p\mbR^{n+1}_+$, i.e.,
\eq{\label{eq:<nu_F,e_1>=0}
\langle \nu_F, e_1\rangle =0,
}
where   $\bar D$ is the Euclidean gradient on $\mbR^{n+1}$ and $e_1$ is the unit inward normal of $\mbR^{n+1}_+$ along $\p\mbR^{n+1}_+$.
Therefore, we call \eqref{defn:anisotropic-free-bdry-Intro} a \textit{free boundary condition in the anisotropic sense}, and  a graph satisfying  \eqref{defn:AMSE} and \eqref{defn:anisotropic-free-bdry-Intro} an {\it anisotropic minimal graph with anisotropic free boundary} (or simply with free boundary).
When $F=F_{eucl}$, it is clear that \eqref{defn:anisotropic-free-bdry-Intro} is 
\eq{\label{old_free}
\langle \nu, e_1\rangle =0.
}
The geometric meaning of \eqref{old_free} is clear: the graph $\Sigma$ is orthogonal to the support $\p \mbR^{n+1}_+$ in the usual sense. Hence it could also be posed as a good boundary condition in the study of hypersurfaces w.r.t. a general $F$. However, our results show that in the general case \eqref{eq:<nu_F,e_1>=0}
is more natural than \eqref{old_free}.

An important example is the so-called capillary minimal graph in $\mbR^{n+1}_+$, which satisfies \eqref{defn:MSE} (the isotropic \eqref{defn:AMSE}) and the classical capillary boundary condition (w.r.t some $\theta\in(0,\pi)$)
\eq{\label{defn:capillary-bdry-condition-iso}
-\frac{u_1}{\sqrt{1+\abs{Du(x)}^2}}
=\left<\nu(x),e_1\right>
=\cos\theta,\quad\forall x\in\p\mbR^n_+.
}
The study of capillary graphs over bounded domains has a long history, see for example \cite{concusfinn1, concusfinn2, Gerhardt76, simonspruck, Uralcprime}. 
Recently, capillary minimal hypersurfaces over unbounded domains have attracted much attention from mathematicians, see \cite{MP21,HS23,LZZ24,CEL25,PTV25,WWZ25}.

 A capillary minimal graph can be viewed as an anisotropic minimal graph with free boundary. To see this, choose the following uniformly elliptic integrand by subtracting a linear term from $F_{eucl}$:
\eq{
F(z)
\coloneqq\abs{z}-\cos\theta\left<z,e_1\right>,\quad
f(y)
=F(-y,1)
=\sqrt{1+\abs{y}^2}+\cos\theta\left<y,e_1\right>,\quad\forall y\in\mbR^n.
}
Then one can easily 
check that with such a choice of $F$, AMSE \eqref{defn:AMSE} is equivalent to MSE \eqref{defn:MSE}, and the capillary boundary condition \eqref{defn:capillary-bdry-condition-iso} becomes \eqref{defn:anisotropic-free-bdry-Intro}.
Hence we can transform the capillary boundary condition to the free boundary condition in the anisotropic sense.
This trick was first used by De Phillipis–Maggi \cite{DePM15}, and has subsequently been adopted as a standard tool in the study of capillary problems, see e.g., \cite{JWXZ25}. We will use it again later.
We remark that another class of capillary graphs $\S$ over $\Om\subset\mbR^n$ with boundary $\p\S=\p\Om\subset\mbR^n$ has been considered by Colombo-Mari-Rigoli \cite{CMR21}.
Compared to ours, their problem can be regarded as an overdetermined boundary problem.


Our main objective in this paper is the following half-space Liouville-type theorem for anisotropic minimal graphs with free boundary in $\mbR^{n+1}_+$.

\begin{theorem}\label{Thm:Liouville}
Let $F:\mbR^{n+1}\ra\mbR_+$ be a uniformly $C^2$-elliptic integrand,
$u$ be a $C^2$-solution to \eqref{defn:AMSE} on $\mbR^n_+$ satisfying the free boundary condition in the anisotropic sense \eqref{defn:anisotropic-free-bdry-Intro}.
If the negative part of $u$ has  at most linear growth on $\mbR^n_+$, i.e,
\eq{\label{condi:linear-growth-negative-part}
u(x)>-\beta(1+|x|), \quad \forall x\in\overline{\mbR^n_+}
}
for some $\beta\geq0$,
then $u$ must be affine.
\end{theorem}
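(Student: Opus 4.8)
The plan is to reduce the half-space free boundary problem to the classical (boundaryless) Liouville theorem of Bombieri--De Giorgi--Miranda and Simon by a reflection argument, exploiting that the free boundary condition in the anisotropic sense is precisely the condition under which the even reflection of $u$ across $\partial\mbR^n_+$ remains a solution of a (possibly different) AMSE. Concretely, given $u$ on $\mbR^n_+$ satisfying \eqref{defn:AMSE} and \eqref{defn:anisotropic-free-bdry-Intro}, define $\tilde u$ on all of $\mbR^n$ by $\tilde u(x_1,x') = u(x_1,x')$ for $x_1>0$ and $\tilde u(x_1,x') = u(-x_1,x')$ for $x_1<0$. The first step is to identify the integrand governing $\tilde u$: one checks that $f$ must be symmetrized in the first variable, i.e. the natural candidate is $\tilde f(y) = \tfrac12\bigl(f(y_1,y') + f(-y_1,y')\bigr)$, equivalently $\tilde F(z) = \tfrac12\bigl(F(z) + F(-z_1,z_2,\dots,z_{n+1})\bigr)$, which is again one-homogeneous, $C^2$ on $\mbS^n$, and — this is where uniform convexity of $\{F<1\}$ is used — still a uniformly elliptic integrand since convexity and the uniform lower bound on $\bar D^2 F|_{\mbS^n}$ are preserved under this averaging. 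One should record that $u$ itself solves the AMSE for $\tilde F$ on $\mbR^n_+$ as well: since $f$ and $\tilde f$ differ by a function that is even in $y_1$, and $u$ satisfies the free boundary condition, the Euler--Lagrange equation and the natural boundary term are unchanged; alternatively, observe $Df$ and $D\tilde f$ agree up to terms whose first component is odd in $y_1$, and \eqref{defn:anisotropic-free-bdry-Intro} says exactly that this component vanishes on $\partial\mbR^n_+$.

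The second step is to verify that $\tilde u$ is a genuine $C^2$ solution of the AMSE for $\tilde F$ across the hyperplane $\{x_1=0\}$, not merely a weak/Lipschitz one. Away from the hyperplane this is clear by construction and the symmetry $\tilde f(-y_1,y')=\tilde f(y_1,y')$ (so that $\div(D\tilde f(D\tilde u))$ on $\{x_1<0\}$ is the reflection of the same quantity on $\{x_1>0\}$). Along $\{x_1=0\}$, the free boundary condition \eqref{defn:anisotropic-free-bdry-Intro} forces $u_1=0$ there: indeed $\langle Df(Du),e_1\rangle = \partial_{y_1}f(Du)$, and since $\tilde f$ is even in $y_1$ with $\tilde f$ uniformly convex, $\partial_{y_1}\tilde f(y)=0$ holds iff $y_1=0$; combined with step one this gives $u_1=0$ on $\partial\mbR^n_+$. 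Hence $\tilde u\in C^1(\mbR^n)$, and because $u$ solves the $\tilde F$-AMSE up to the boundary while the reflected equation matches, $\tilde u$ is a weak solution of a uniformly elliptic (in the relevant region) quasilinear equation; standard elliptic regularity — or simply matching all derivatives: the odd-order $x_1$-derivatives of $u$ at $x_1=0$ that survive reflection are controlled by differentiating \eqref{defn:AMSE} and \eqref{defn:anisotropic-free-bdry-Intro} tangentially — upgrades $\tilde u$ to $C^2(\mbR^n)$.

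The third step is the growth hypothesis. The assumption \eqref{condi:linear-growth-negative-part}, $u(x) > -\beta(1+|x|)$ on $\overline{\mbR^n_+}$, reflects to $\tilde u(x) > -\beta(1+|x|)$ on all of $\mbR^n$ since $|(-x_1,x')|=|(x_1,x')|$; that is, $\tilde u$ has at most one-sided linear growth on $\mbR^n$. We are now exactly in the setting of the Liouville theorem of Bombieri--De Giorgi--Miranda \cite{BDeGM69} and Simon \cite{Simon76} for the elliptic integrand $\tilde F$: a $C^2$ solution of the AMSE on all of $\mbR^n$ whose negative part grows at most linearly must be affine. Therefore $\tilde u$ is affine, hence so is $u = \tilde u|_{\mbR^n_+}$; and affinity together with $u_1=0$ on $\{x_1=0\}$ (so the linear part has no $e_1$-component) is automatically consistent — $u(x) = \langle a, x'\rangle + b$.

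The main obstacle I expect is step two: ensuring the reflected function is $C^2$ and is a \emph{classical} solution rather than merely Lipschitz or a solution in the viscosity/distributional sense, because the AMSE is non-uniformly elliptic and a priori the reflection could create a ridge along $\{x_1=0\}$ if the free boundary condition were not exactly the right one. The point to get right is that \eqref{defn:anisotropic-free-bdry-Intro}, once $F$ is replaced by its $y_1$-symmetrization $\tilde F$, is equivalent to $u_1 = 0$ on the boundary, which is precisely the Neumann-type compatibility that makes the even extension $C^1$; the higher regularity then follows from interior elliptic estimates for the quasilinear equation (its coefficients $D^2\tilde f(D\tilde u)$ are continuous across the hyperplane because $D\tilde u$ is, and uniformly elliptic on compact sets since $|D\tilde u|$ is locally bounded). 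A secondary technical point is confirming that the averaging $F \mapsto \tilde F$ preserves \emph{uniform} ellipticity with constants depending only on those of $F$ — this is a short convexity computation but must be stated, since the cited Liouville theorem requires a genuine uniformly elliptic integrand.
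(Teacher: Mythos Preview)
Your reflection approach has a fatal gap: the free boundary condition \eqref{defn:anisotropic-free-bdry-Intro} does \emph{not} force $u_1=0$ on $\partial\mbR^n_+$, so the even extension is not even $C^1$. Take the capillary integrand $f(y)=\sqrt{1+|y|^2}+\cos\theta\, y_1$ with $\theta\neq\pi/2$: here \eqref{defn:anisotropic-free-bdry-Intro} reads $u_1/\sqrt{1+|Du|^2}=-\cos\theta\neq0$, so $u_1\neq0$ on the boundary and $\tilde u$ has a ridge along $\{x_1=0\}$. The passage through the symmetrized $\tilde f$ does not help: the assertion that ``$f$ and $\tilde f$ differ by a function that is even in $y_1$'' is wrong (the difference $\tfrac12(f(y)-f(-y_1,y'))$ is \emph{odd} in $y_1$), and more to the point it is nonlinear for general $F$, so $u$ need not solve the AMSE for $\tilde f$ at all, nor does the $f$--free boundary condition imply the $\tilde f$--free boundary condition. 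Your argument would go through only when $F$ is already symmetric in $z_1$, in which case \eqref{defn:anisotropic-free-bdry-Intro} \emph{is} the Neumann condition $u_1=0$; but the theorem covers general $F$, and the capillary case with $\theta\neq\pi/2$ is a principal motivation.

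The paper does not reflect across the boundary. It proves the gradient estimate \eqref{ineq:gradient-estimate-intro} directly on $\mbR^n_+$ by integral methods: the free boundary condition enters through the identity $g(\na_F W_f,\mu)=0$ on $\p\S$ (Lemma~\ref{Lem:bdry-tangential-property}), which kills the boundary terms in the Moser iteration for $\log W_f$, and through a trace inequality derived from the first variation formula with the anisotropic conormal $\mu_F$, which together with stability yields a Sobolev inequality on $\S$ free of boundary contributions. The Liouville theorem then follows from this gradient estimate by considering the point reflection $\tilde u(x)=-u(-x)$ on $\mbR^n_-$ (which converts the lower bound on $u$ into an upper bound on $\tilde u$, not an extension across $\p\mbR^n_+$), letting $r\to\infty$ to obtain a uniform gradient bound, and concluding via rescaling and $C^{1,\alpha}$ estimates for oblique boundary value problems.
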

Theorem \ref{Thm:Liouville} holds for any  $F$ and any dimension $n$, and we point out that the linear growth assumption is sharp, as can be inferred from the recent counterexample constructed by Mooney-Yang \cite{MY24}, which we shall discuss below (see \eqref{example:MY24-superlinear}).

Moreover, thanks to the specialty of the half-space, our Liouville-type theorem in fact holds in a more general setting, that is, for \textit{anisotropic minimal graphs with capillary boundary in $\mbR^{n+1}_+$}.
Such graphs are anisotropic minimal graphs in $\mbR^{n+1}_+$, intersecting $\p\mbR^{n+1}_+$ along their boundaries with a constant contact angle condition in the anisotropic sense as follows:
\eq{\label{defn:capillary-bdry-condition}
\left<\nu_F,e_1\right>
=-\om_0,
}
where $\om_0\in(-F(e_1),F(-e_1))$ is a given constant prescribing the contact angle.
Using again the trick by De Phillipis–Maggi \cite{DePM15}, see also \cite[Section 2.3]{JWXZ25}, this can be transformed to an anisotropic free boundary problem, and hence we obtain:
\begin{corollary}\label{Cor:Liouville-capillary-ani}
Let $F:\mbR^{n+1}\ra\mbR_+$ be a uniformly $C^2$-elliptic integrand and $\om_0\in\left(-F(e_1),F(-e_1)\right)$,
$u$ be a $C^2$-solution to \eqref{defn:AMSE} on $\mbR^n_+$, such that its graph $\S$ satisfies the capillary boundary condition in the anisotropic sense \eqref{defn:capillary-bdry-condition}.
If the negative part of $u$ has at most linear growth on $\mbR^n_+$, then $u$ must be affine.
\end{corollary}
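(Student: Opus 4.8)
I would reduce the free boundary problem on $\mbR^n_+$ to an entire problem on $\mbR^n$, prove a global gradient bound, and conclude by a Liouville argument for uniformly elliptic equations. The flatness of $\p\mbR^n_+$ is exactly what makes \eqref{defn:anisotropic-free-bdry-Intro} usable: $\langle Df(Du),e_1\rangle=0$ on $\{x_1=0\}$ is the conormal (natural Neumann) condition for $\int f(Du)$. Set $\bar u(x_1,x'):=u(\abs{x_1},x')$, $T:=\mathrm{diag}(-1,1,\dots,1)\in\mbR^{n\times n}$, $\bar f(y):=f(Ty)$. A direct computation shows that $\bar u$ solves $\div(D\bar f(D\bar u))=0$ on $\{x_1<0\}$, that $D^2\bar f(y)=T\,D^2f(Ty)\,T$ has the same ellipticity bounds as $D^2f$, and that \eqref{defn:anisotropic-free-bdry-Intro} makes \emph{both} conormals $\langle Df(Du),e_1\rangle$ and $\langle D\bar f(D\bar u),e_1\rangle$ vanish along $\{x_1=0\}$. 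Hence $\bar u$ is a global weak solution on $\mbR^n$ of $\div\,A(x,D\bar u)=0$, with $A(x,\cdot)=Df$ for $x_1>0$ and $A(x,\cdot)=D\bar f$ for $x_1<0$: this is uniformly elliptic in the gradient (with the ellipticity constants of $F$), depends on $x$ only through $\mathrm{sgn}(x_1)$, and geometrically the graph of $\bar u$ is an anisotropic minimal graph over $\mbR^n$ for the uniformly elliptic, merely $x$-measurable integrand that equals $F$ over $\{x_1>0\}$ and its mirror image over $\{x_1<0\}$, now with empty boundary.

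\textbf{Step 2 (global gradient bound).} For $u$ on $\mbR^n_+$ the interior estimate $\abs{Du(x_0)}\le C_1\exp\!\big(C_2\,\mathrm{osc}_{B_R(x_0)}u/R\big)$ (with $B_R(x_0)\subset\mbR^n_+$) is the Bombieri--De Giorgi--Miranda / Simon estimate \cite{Simon76,Simon77}. What must be added is its analogue up to the free boundary. I would obtain this either (a) directly, by running the BDGM/Korevaar argument up to $\p\mbR^n_+$ — the point being that the auxiliary subsolution built from $(1+\abs{Du}^2)^{1/2}$, or rather its anisotropic analogue on the graph, satisfies a favourable Neumann condition along $\p\S$ because of \eqref{defn:anisotropic-free-bdry-Intro} — or (b) via Step 1, by applying the interior gradient estimate for minimizers of a uniformly elliptic, $x$-measurable anisotropic perimeter to the subgraph of $\bar u$, the BDGM proof being built from density and decay estimates that should tolerate the $x$-discontinuity across the single hyperplane $\{x_1=0\}$. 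Either route yields
\[
\abs{Du(x_0)}\le C_1\exp\!\Big(C_2\,\frac{u(x_0)-\inf_{B_R(x_0)\cap\overline{\mbR^n_+}}u}{R}\Big),\qquad x_0\in\overline{\mbR^n_+},\ R>0 .
\]
Then \eqref{condi:linear-growth-negative-part} gives $\inf_{B_R(x_0)\cap\overline{\mbR^n_+}}u\ge-\beta(1+\abs{x_0}+R)$, so the exponent is at most $\big(u(x_0)+\beta(1+\abs{x_0})\big)/R+\beta\to\beta$ as $R\to\infty$; hence $\sup_{\overline{\mbR^n_+}}\abs{Du}\le C_1e^{C_2\beta}=:M<\infty$.

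\textbf{Step 3 (Liouville).} With $\abs{D\bar u}\le M$ the equation $\div\,A(x,D\bar u)=0$ is uniformly elliptic with bounded coefficients, and since $A$ is independent of $x_2,\dots,x_n$, tangential translations preserve solutions. So I would pass to the difference quotients $h^{-1}\big(\bar u(\cdot+he_j)-\bar u\big)$, $j\ge2$: these solve linear uniformly elliptic divergence-form equations, are bounded by $M$, and — by Caccioppoli together with the De Giorgi--Nash estimate — converge locally to $\p_j\bar u$, which is then a bounded entire weak solution of $\div\big(B(x)\,D\p_j\bar u\big)=0$ with $B$ uniformly elliptic. The De Giorgi--Nash oscillation decay $\mathrm{osc}_{B_\rho}\p_j\bar u\le C(\rho/R)^\alpha\,\mathrm{osc}_{B_R}\p_j\bar u\le 2CM(\rho/R)^\alpha$, letting $R\to\infty$, forces $\p_j\bar u\equiv c_j$ for $j=2,\dots,n$. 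Thus $u(x)=\langle c',x'\rangle+h(x_1)$, and \eqref{defn:AMSE} collapses to $f_{y_1y_1}\!\big(h'(x_1),c'\big)\,h''(x_1)=0$; uniform ellipticity forces $h''\equiv0$, so $u$ is affine, i.e.\ $\S$ is flat. Deducing Corollary~\ref{Cor:Liouville-capillary-ani} is then the De Phillipis--Maggi device: put $\tilde F(z):=F(z)+\om_0\langle z,e_1\rangle$; the added term is linear, so $D^2\tilde F=D^2F$ and $\tilde F$ is again a uniformly $C^2$-elliptic integrand precisely when it is positive on $\mbS^{n}$, i.e.\ exactly for $\om_0\in(-F(e_1),F(-e_1))$; moreover $\tilde f(y)=f(y)-\om_0y_1$, so $D\tilde f=Df-\om_0e_1$ leaves \eqref{defn:AMSE} unchanged while $\nu_{\tilde F}=\bar DF(\nu)+\om_0e_1=\nu_F+\om_0e_1$ turns \eqref{defn:capillary-bdry-condition} into \eqref{defn:anisotropic-free-bdry-Intro}, and Theorem~\ref{Thm:Liouville} applies to $(\tilde F,u)$.

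\textbf{Expected main obstacle.} Steps 1 and 3 are essentially bookkeeping and classical linear theory; the genuinely hard ingredient is the gradient estimate of Step 2 \emph{up to the free boundary}. The Bombieri--De Giorgi--Miranda mechanism is nonlinear and exploits the (anisotropic) minimal surface structure of the graph, and carrying it across $\p\mbR^n_+$ — whether by adapting the maximum-principle argument to the conormal condition or by checking that the perimeter-theoretic proof survives the $x$-discontinuous integrand created in Step 1 — is the crux of the matter. That this hypothesis cannot be cheapened is consistent with the sharpness of \eqref{condi:linear-growth-negative-part}, as witnessed by the Mooney--Yang example.
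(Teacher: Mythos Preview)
Your final reduction --- replacing $F$ by $\tilde F(z)=F(z)+\om_0\langle z,e_1\rangle$ so that the capillary condition \eqref{defn:capillary-bdry-condition} becomes the free-boundary condition \eqref{defn:anisotropic-free-bdry-Intro} while \eqref{defn:AMSE} is unchanged --- is precisely the De Philippis--Maggi trick by which the paper derives Corollary~\ref{Cor:Liouville-capillary-ani} from Theorem~\ref{Thm:Liouville}, and your check that $\tilde F$ is a uniformly elliptic integrand exactly on the range $\om_0\in(-F(e_1),F(-e_1))$ is correct.

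For the underlying free-boundary Liouville theorem, your plan and the paper's differ in packaging but agree on the substantive point. Your route (a) for Step~2 --- that the anisotropic analogue of $\sqrt{1+|Du|^2}$ satisfies a favourable Neumann condition on $\p\S$, so that the BDGM integral argument runs up to the boundary --- is exactly the engine of the paper. The relevant quantity is $W_f=f(Du)=F(\nu)W$; the paper shows that $g(\na_F W_f,\mu)=0$ on $\p\S$ (because $\mu$ is an anisotropic principal direction under \eqref{defn:anisotropic-free-bdry-Intro}), that $\log W_f$ is a weighted $F$-subsolution on $\S$, and then runs a De Giorgi--Nash--Moser iteration, with the Michael--Simon Sobolev inequality closed up to $\p\S$ via a trace estimate (from the anisotropic first-variation formula and a lower bound on $\langle\mu_F,-e_1\rangle$) together with the stability inequality. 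Two cautions: you write ``BDGM/Korevaar'', but these are different mechanisms, and the paper explicitly notes that maximum-principle approaches in prior capillary work incurred angle restrictions --- it is the integral method that succeeds here; and your route (b) via even reflection to an entire problem with an $x$-discontinuous integrand is genuinely nonstandard and not pursued in the paper. The paper does use a reflection, but the point reflection $\tilde u(x)=-u(-x)$, which solves the \emph{same} AMSE on $\mbR^n_-$ with the same $f$; this converts the $\sup$-form of the gradient estimate (Theorem~\ref{Thm:gradient-estimate}) into the $\inf$-form you wrote down without introducing any $x$-dependence.

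Once the gradient is globally bounded, your Step~3 --- tangential difference quotients plus the De Giorgi--Nash Liouville theorem for each $\p_j\bar u$ ($j\ge2$), then an ODE in $x_1$ --- is a valid and more elementary alternative to the paper's endgame, which instead blows down via $u_R(x)=R^{-1}u(Rx)$ and invokes uniform $C^{1,\alpha}$ estimates for quasilinear oblique boundary value problems (Lieberman--Trudinger) to freeze $Du$.
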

Applying this result to classical capillary minimal graphs in $\mbR^{n+1}_+$ (precisely, letting $F=F_{eucl}$, $\om_0=-\cos\theta$ in Corollary \ref{Cor:Liouville-capillary-ani}), we have thus shown:
\begin{corollary}\label{Cor:Liouville-capillary}
Let $\theta\in(0,\pi)$, $u$ be a $C^2$-solution to \eqref{defn:MSE} on $\mbR^n_+$, such that its graph $\S$ satisfies the capillary boundary condition \eqref{defn:capillary-bdry-condition-iso}.
If the negative part of $u$ has at most linear growth on $\mbR^n_+$, then $u$ must be affine.
\end{corollary}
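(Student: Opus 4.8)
The plan is to deduce Corollary~\ref{Cor:Liouville-capillary} from the two preceding results, so I first record the reductions and then sketch the proof of Theorem~\ref{Thm:Liouville}, which carries all the content. Corollary~\ref{Cor:Liouville-capillary} is the special case $F=F_{eucl}$, $\om_0=-\cos\theta$ of Corollary~\ref{Cor:Liouville-capillary-ani}: for $\theta\in(0,\pi)$ one has $F(e_1)=F(-e_1)=1$ and $\cos\theta\in(-1,1)$, hence $\om_0\in\left(-F(e_1),F(-e_1)\right)$; by \eqref{defn:f}, $f(y)=\sqrt{1+\abs{y}^2}$, so AMSE \eqref{defn:AMSE} coincides with MSE \eqref{defn:MSE}; and $\nu_F=\bar DF_{eucl}(\nu)=\nu$, so \eqref{defn:capillary-bdry-condition} becomes $\langle\nu,e_1\rangle=-\om_0=\cos\theta$, i.e. \eqref{defn:capillary-bdry-condition-iso}. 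The one-sided growth hypothesis is unchanged, so it suffices to prove Corollary~\ref{Cor:Liouville-capillary-ani}.

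I would reduce Corollary~\ref{Cor:Liouville-capillary-ani} to Theorem~\ref{Thm:Liouville} by the De Phillipis--Maggi device \cite{DePM15}: given $F$ and $\om_0\in\left(-F(e_1),F(-e_1)\right)$, set $\hat F(z):=F(z)+\om_0\langle z,e_1\rangle$. This is one-homogeneous, has the same Hessian as $F$ (a linear term is a null Lagrangian), and the constraint on $\om_0$ is exactly what keeps $\hat F>0$ on $\mbS^n$; so $\hat F$ is a uniformly $C^2$-elliptic integrand. Since $\hat f(y):=\hat F(-y,1)=f(y)-\om_0 y_1$ gives $D\hat f=Df-\om_0 e_1$, the AMSE for $\hat F$ is again \eqref{defn:AMSE}, while the free boundary condition $\langle D\hat f(Du),e_1\rangle=0$ becomes $\langle Df(Du),e_1\rangle=\om_0$, equivalently $\langle\nu_F,e_1\rangle=-\om_0$, that is \eqref{defn:capillary-bdry-condition}. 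This works precisely because on a half-space the inward normal $e_1$ is constant, so the capillary term is absorbed into a single, $x$-independent integrand --- the ``specialty of the half-space'' --- and Theorem~\ref{Thm:Liouville} applied to $\hat F$ gives Corollary~\ref{Cor:Liouville-capillary-ani}.

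For Theorem~\ref{Thm:Liouville} I would argue in three steps. \emph{(1) Uniform gradient bound.} Combine the interior gradient estimate for AMSE of Bombieri--De Giorgi--Miranda/Simon \cite{BDeGM69,Simon76} with a boundary gradient estimate near $\p\mbR^n_+$ that respects \eqref{defn:anisotropic-free-bdry-Intro} (in the spirit of capillary/Neumann gradient estimates, adapted to the anisotropic operator), and feed in the one-sided bound \eqref{condi:linear-growth-negative-part} letting the radius tend to infinity, to get $\sup_{\overline{\mbR^n_+}}\abs{Du}\le C(n,F,\beta)$; in particular \eqref{defn:AMSE} becomes uniformly elliptic. \emph{(2) Tangential derivatives.} With $A:=D^2f(Du)$, a bounded symmetric uniformly elliptic matrix field, each $u_{x_k}$ ($k=2,\dots,n$) solves $\div(A\,Du_{x_k})=0$ in $\mbR^n_+$, and differentiating \eqref{defn:anisotropic-free-bdry-Intro} tangentially yields the homogeneous conormal condition $\langle A\,Du_{x_k},e_1\rangle=0$ on $\p\mbR^n_+$; even reflection across $\p\mbR^n_+$, with the matching reflection of $A$, makes $u_{x_k}$ a bounded weak solution of a uniformly elliptic divergence-form equation on $\mbR^n$, hence constant $\equiv c_k$ by the Liouville theorem (Moser's Harnack inequality). \emph{(3) Normal derivative.} Now $Du=(u_{x_1},c_2,\dots,c_n)$, and \eqref{defn:anisotropic-free-bdry-Intro} reads $\langle Df(u_{x_1},c_2,\dots,c_n),e_1\rangle=0$ on $\p\mbR^n_+$; since $t\mapsto\langle Df(t,c_2,\dots,c_n),e_1\rangle$ has positive derivative $\langle D^2f\,e_1,e_1\rangle$ (uniform convexity of $\{F<1\}$), it is injective, so $u_{x_1}\equiv t_0$ on $\p\mbR^n_+$ for some constant $t_0$; as $u_{x_1}$ also solves $\div(A\,Du_{x_1})=0$ in $\mbR^n_+$, odd reflection across $\p\mbR^n_+$ makes $u_{x_1}-t_0$ a bounded weak solution on $\mbR^n$, hence identically zero. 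Therefore $Du$ is constant and $u$ is affine.

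The main obstacle is step~(1): AMSE is non-uniformly elliptic, and the a priori gradient bound up to $\p\mbR^n_+$ --- a boundary analogue of the Bombieri--De Giorgi--Miranda/Simon estimate compatible with \eqref{defn:anisotropic-free-bdry-Intro}, arranged so that \eqref{condi:linear-growth-negative-part} can be exploited as the radius tends to infinity --- is the technical heart. Once a uniform Lipschitz bound is available, steps~(2)--(3) rest only on standard facts for uniformly elliptic divergence-form equations: boundary Schauder regularity, even/odd reflection for the conormal-Neumann and Dirichlet problems, and the Liouville theorem on $\mbR^n$. I would also emphasise, as the introduction does, that it is the \emph{anisotropic} free boundary condition \eqref{defn:anisotropic-free-bdry-Intro}, and not the naive $\langle\nu,e_1\rangle=0$, that makes the linearised conormal condition in step~(2) homogeneous and hence compatible with reflection.
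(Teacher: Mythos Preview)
Your reductions (Corollary~\ref{Cor:Liouville-capillary} $\Rightarrow$ Corollary~\ref{Cor:Liouville-capillary-ani} $\Rightarrow$ Theorem~\ref{Thm:Liouville} via $\hat F=F+\om_0\langle\cdot,e_1\rangle$) are exactly what the paper does.

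From the uniform gradient bound onward your argument diverges from the paper's. The paper rescales $u_R(x)=u(Rx)/R$, observes that the equation is now uniformly elliptic with oblique boundary condition (Proposition~\ref{fF}), invokes the Lieberman--Trudinger $C^{1,\alpha}$ estimate for such problems, and lets $R\to\infty$ to force $Du\equiv Du(0)$. Your route---differentiate equation and boundary condition, note that the tangential derivatives $u_{x_k}$ ($k\ge2$) satisfy $\div(A\,Du_{x_k})=0$ with the \emph{homogeneous} conormal condition $\langle A\,Du_{x_k},e_1\rangle=0$, even-reflect across $\p\mbR^n_+$, and apply Moser's Liouville theorem for bounded solutions of divergence-form uniformly elliptic equations on $\mbR^n$---is correct and arguably more elementary, since it trades oblique boundary H\"older theory for the plain De Giorgi--Nash--Moser estimate on $\mbR^n$. (Once the tangential derivatives are constant, step~(3) even simplifies: $u=g(x_1)+\sum_{k\ge2}c_kx_k$, the equation reduces to $\partial_1\bigl(f_1(g',c_2,\dots,c_n)\bigr)=0$, and strict monotonicity in the first slot gives $g'\equiv\text{const}$; no second reflection needed.) The paper's blow-down, on the other hand, packages everything into a single off-the-shelf estimate and avoids verifying the reflection/weak-formulation details.

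One point in step~(1) you should make explicit: the BDM/Simon-type estimate \eqref{ineq:gradient-estimate-intro} involves $\sup u$, so the \emph{lower} bound \eqref{condi:linear-growth-negative-part} does not feed in directly. The paper converts it via the point-reflection $\tilde u(x)=-u(-x)$ on $\mbR^n_-$, which solves the same problem (with free boundary on $\p\mbR^n_-$) and now satisfies the \emph{upper} bound $\tilde u<\beta(1+\abs{x})$; applying Theorem~\ref{Thm:gradient-estimate} to $\tilde u$ and sending $r\to\infty$ then yields $\abs{Du(y)}=\abs{D\tilde u(-y)}\le C(n,F,\beta)$.
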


Corollary \ref{Cor:Liouville-capillary} completely removes a range restriction on $\theta$ about  Liouville-type theorems proved recently by Wang-Wei-Zhang in \cite{WWZ25}.
The key point in \cite{WWZ25} is, by utilizing the maximum principle, to obtain a local gradient estimate under certain angle conditions as follows:
\eq{
\abs{Du(0)}
\le C_1e^{1+C_2\frac{\sup_{E_r}\abs{u}}{r}+C_3\left(\frac{\sup_{E_r}\abs{u}}{r}\right)^2},\quad\forall r>0,
}
where $E_r$ is an ellipsoid-type domain designed to match the capillary boundary condition, $C_1,C_2,C_3$ are constants depending only on $n$ and $\theta$. 
A drawback of using the maximum principle is that it is very difficult to obtain the above estimate for the whole range of $\theta$.
In contrast, the crucial step in proving
Theorem \ref{Thm:Liouville} is, by using the integral method, to obtain the following improved gradient estimate, which implies the desired estimate for the whole range of $\theta$ in the capillary case.
\begin{theorem}\label{Thm:gradient-estimate}
Let $F:\mbR^{n+1}\ra\mbR_+$ be a uniformly $C^2$-elliptic integrand.
Let $u$ be a $C^2$-solution to \eqref{defn:AMSE} on $\mbR^n_+$ satisfying the free boundary condition in the anisotropic sense \eqref{defn:anisotropic-free-bdry-Intro}.
There exist positive constants $\mathscr{C}_1=\mathscr{C}_1(n,F)$, $\mathscr{C}_2=\mathscr{C}_2(n,F)$ depending only on $n$ and $F$, such that
for any $x\in\overline{\mbR^n_+}$, the gradient estimate holds:
\eq{\label{ineq:gradient-estimate-intro}
\abs{Du(x)}
\leq e^{\mathscr{C}_1+\frac{\mathscr{C}_2}r\left(\sup_{B_{r,+}(x)}u-u(x)\right)},\quad\forall r>0,
}
where $B_{r,+}(x)=B_r(x)\cap\mbR^n_+$.
\end{theorem}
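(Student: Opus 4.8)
The plan is to adapt the classical integral/Bombieri–De Giorgi–Miranda technique for interior gradient estimates of the (anisotropic) minimal surface equation to the half-space with the anisotropic free boundary condition. The natural quantity to work with is the logarithm of the anisotropic normal speed, i.e.\ $w \coloneqq \log\big(F(\nu)\sqrt{1+\abs{Du}^2}\big)$ or equivalently $\log f(Du)$, which on the graph $\S$ should be a subsolution of the linearized operator (the anisotropic Jacobi-type operator $\mcL_\S$ associated to the second variation of $\mcA_F$), with the key Simons-type inequality $\mcL_\S w \ge c\,\abs{A_F}^2 \ge c\,\abs{\na_\S w}^2$ for a dimensional constant $c>0$. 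First I would set up this machinery on the graph: write the AMSE in divergence form, differentiate, and record that $\log f(Du)$ satisfies a differential inequality of the form $\div_\S(a^{ij}\na_j w) \ge \tfrac{1}{C}\abs{\na_\S w}^2$ in the weak sense with uniformly elliptic (in the induced sense) coefficients $a^{ij}$ built from $D^2f$.

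The essential new point is the boundary behaviour. The free boundary condition $\langle Df(Du),e_1\rangle = 0$ on $\p\mbR^n_+$ says exactly that $\nu_F = \bar DF(\nu)$ is tangent to $\p\mbR^{n+1}_+$ along $\p\S$, which is the Neumann-type condition making the boundary terms in integration by parts vanish. The key step is therefore to show that, after the Michael–Simon-type reflection or directly by a doubling argument, $w$ extends to a subsolution across $\p\S$ with no distributional boundary contribution — concretely, that for all nonnegative test functions $\varphi$ on $\overline{\mbR^{n+1}_+}$ one has $\int_\S a^{ij}\na_i w\,\na_j\varphi \le -\tfrac1C\int_\S \abs{\na_\S w}^2\varphi$ with the boundary integral $\int_{\p\S}\varphi\,a^{ij}\na_i w\,\mu_j$ dropping out, because the conormal $\mu$ of $\p\S$ in $\S$ is (up to the $F$-weighting) the direction $e_1$, along which $w$ has vanishing flux by the free boundary condition. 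I expect this boundary cancellation — verifying that the anisotropic free boundary condition is precisely the one that kills the boundary term for the chosen subsolution $w$, and that $\p\S$ meets $\p\mbR^{n+1}_+$ transversally so the reflected surface is Lipschitz — to be the main obstacle and the technical heart of the argument.

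With the (reflected) subsolution in hand, I would run the De Giorgi iteration / Moser-type estimate on the minimal graph exactly as in Bombieri–De Giorgi–Miranda and Simon: using the monotonicity formula and the Michael–Simon Sobolev inequality on $\S$ (in their half-space, reflected versions, valid because $\S$ is anisotropic-minimal with free boundary and hence the doubled surface is a stationary varifold with bounded anisotropic mean curvature zero), one obtains a mean-value / sup estimate of the form
\[
\sup_{\S\cap B_{r/2}} w \le C\Big(n,F\Big)\Big(1 + \fint_{\S\cap B_r} w_+\, \rd\mcH^n\Big),
\]
and then controls the right-hand side by the height oscillation via the divergence-structure bound $\int_{\S\cap B_r} w_+ \le C\,r^{n-1}\big(\sup_{B_{r,+}(x)}u - u(x)\big) + C r^n$ (a Caccioppoli/area bound: the area of the graph over $B_{r,+}(x)$ is controlled by $r^n$ plus the height oscillation, using that $\div(Df(Du))=0$ and the free boundary condition to integrate by parts against a cutoff). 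Exponentiating $\sup w$ gives $\abs{Du(x)} \le \exp\!\big(\mathscr C_1 + \tfrac{\mathscr C_2}{r}(\sup_{B_{r,+}(x)}u - u(x))\big)$, which is \eqref{ineq:gradient-estimate-intro}. The remaining routine points — scaling to reduce to unit radius, checking that all constants depend only on $n$ and the ellipticity/$C^2$-bounds of $F$ on $\mbS^n$, and handling boundary points $x\in\p\mbR^n_+$ versus interior points uniformly via the reflected picture — I would dispatch at the end.
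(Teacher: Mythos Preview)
Your plan follows the paper's route closely: the subsolution is $w=\log W_f=\log f(Du)$, the boundary cancellation $g(\nabla_F W_f,\mu)=0$ along $\partial\Sigma$ (the paper's Lemma \ref{Lem:bdry-tangential-property}) is precisely what makes the integration by parts work without a boundary contribution, and the $L^1$ control of $w$ by the height oscillation together with a Moser iteration on $\Sigma$ proceed essentially as you describe.

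The gap is in the Sobolev/Michael--Simon step. Reflection does not work for a general anisotropic $F$: the free boundary condition is $\langle\nu_F,e_1\rangle=0$, not $\langle\nu,e_1\rangle=0$, so the doubled surface is in general neither $C^1$ across $\partial\Sigma$ nor stationary for any natural integrand, and there is no ``reflected Michael--Simon'' to invoke. The paper instead applies Michael--Simon directly on $\Sigma$ as a hypersurface with boundary, which produces two extra terms that must be absorbed. The boundary integral $\int_{\partial\Sigma}\varphi$ is handled by a trace estimate $\int_{\partial\Sigma}\varphi\le C(F)\int_\Sigma|\nabla\varphi|$ (Lemma \ref{Lem:trace}), obtained by testing the anisotropic first variation \eqref{eq:1st-variation-anisotropic} with $X=-\varphi e_1$ and using the lower bound $\langle\mu_F,-e_1\rangle\ge m_F$ (Lemma \ref{Lem:mu_F-e_1-lower-bound}). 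The mean-curvature term $\int_\Sigma\varphi|H|$ cannot be dropped either, since $H_F=0$ does \emph{not} imply $H=0$; it is instead controlled via the stability inequality \eqref{ineq:stability-ineq} for the anisotropic minimal graph. These two ingredients --- the trace estimate via the anisotropic co-normal $\mu_F$, and the use of stability to bound $|h|$ --- are the missing pieces in your plan, and they are what make the boundary-free $L^2$-Sobolev inequality (Proposition \ref{Prop:Sobolev-ineq}) go through without any reflection.
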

For the classical minimal surface equation \eqref{defn:MSE}, Bombieri-De Giorgi-Miranda \cite{BDeGM69} established the interior gradient estimates ($n=2$ already by Finn \cite{Finn54}) by using the nowadays well-known integral methods.
Precisely, they proved that
\eq{\label{esti:interior-gradient-BDM69-intro}
\abs{Du(x)}
\leq e^{C_1+\frac{C_2}r\left(\sup_{B_r(x)}u-u(x)\right)},
}
where $C_1,C_2$ are constants depending only on $n$.
An alternative proof was later shown by Trudinger \cite{Trudinger72}, which is somewhat similar to the classical potential theory.
In \cite{Simon76}, Simon obtained the Bombieri-De Giorgi-Miranda-type interior gradient estimates for a very general class of non-uniformly elliptic equations using integral method, see also \cite{LU70}.
In terms of \eqref{defn:AMSE} with respect to a general uniformly elliptic $F$, Simon's estimates can be simplified (c.f., \cite[Section 4]{Simon76}) as  \eqref{esti:interior-gradient-BDM69-intro}, 
with $C_1,C_2$ additionally depending on $F$.
Our gradient estimate \eqref{ineq:gradient-estimate-intro} can be regarded as the anisotropic free boundary analogue of the classical estimates of Bombieri–De Giorgi–Miranda \cite{BDeGM69} and Simon \cite{Simon76}.


Our results are optimal in the following sense.
 The first example of a non-flat entire minimal graph, a minimal graph defined over the  whole Euclidean space $\mbR^n$,  was constructed in \cite{BDG69}, which behaves like
\eq{
\abs{u(x)}
=O(\abs{x}^{2+O(\frac1n)}).
}
Very recently
Mooney-Yang  constructed in \cite{MY24}  a pair $(u,F)$ solving \eqref{defn:AMSE} on $\mbR^n$ ($n\geq4$) with
the following superlinear growth\eq{\label{example:MY24-superlinear}
\sup_{B_r}u\sim r^{1+\mu}.
}
for any $\mu\in(0,\frac12)$.
Due to the symmetry in their examples it is easy to see that those examples are also non-flat examples of anisotropic minimal hypersurfaces with free boundary in $\mbR^{n+1}_+=\{x_1>0\}$, satisfying \eqref{example:MY24-superlinear}.
Hence the linear growth assumption
in our Theorem \ref{Thm:Liouville} is sharp.


Recently there are a few developments on stable  capillary minimal hypersurfaces, which we want to mention here.
 Hong-Saturnino \cite{HS23} proved that a stable capillary minimal hypersurface in $\mbR^3_+$ is flat, by using the idea of Fischer-Colbrie and Schoen \cite{F-C-S80}. See also \cite{MP21,LZZ24}.
For higher dimensions,  Li-Zhou-Zhu proved in \cite{LZZ24} that a stable capillary minimal hypersurface with Euclidean volume growth in $\mbR^{n+1}_+$ is flat for $n=3, 4, 5$ provided that the capillary angle $\theta$ belongs to a certain range, by applying methods of Schoen-Simon-Yau \cite{SSY75}. 
For the anisotropic case, there are on Guo-Xia \cite{GX25} proved that a stable anisotropic capillary minimal hypersurface with Euclidean volume growth in $\mbR^3_+$ is flat. However, in  the anisotropic setting,  higher dimensional cases are still unknown. We remark that (anisotropic) capillary minimal graphs are stable (anisotropic) capillary minimal hypersurfaces.



Now we sketch our ideas of proof.
To obtain Theorem \ref{Thm:gradient-estimate} we use the classical integral methods enlightened by Bombieri-De Giorgi-Miranda \cite{BDeGM69}, Trudinger \cite{Trudinger72}, and Simon \cite{Simon76}, as mentioned above.
Note that even for the classical capillary boundary condition \eqref{defn:capillary-bdry-condition-iso}, certain difficulties arise in pursuing the rigidity results.
For example, in \cite{LZZ24}, to apply the classical Schoen-Simon-Yau technique \cite{SSY75}, Li-Zhou-Zhu analyzed the normal derivative of the length of second fundamental form along the boundary, which naturally appears when integrating by parts, see also \cite{CEL25,PTV25} for related results.
Their idea is to use a \textit{trace estimate} in virtue of the capillary boundary condition, to absorb this boundary term into the integral over $\S$.
For two dimension the rigidity result holds for any $\theta\in(0,\pi)$, but for higher dimensions, the analysis becomes delicate, which results in a certain restriction on $\theta$.

In our case, we handle this difficulty by considering the \textit{anisotropic graphical area element} (recalling \eqref{defn:mathscr-A})
\eq{
W_f(x)
\coloneqq f(Du(x)),\quad\forall x\in\overline{\mbR^n_+},
}
which reduces to the standard \textit{graphical area element} $W(x)=\sqrt{1+\abs{Du(x)}^2}$ when $F=F_{eucl}$.
In the capillary case, $W_f$ coincides with the so-called \textit{capillary graphical area element} studied in \cite{WWZ25}. One can write 
\eq{\label{eq:W-W_f-geometric}
W_f=F(\nu)\<\nu, e_{n+1}\>^{-1},
}
which has a better  geometric meaning. 
The starting point of this paper is a new observation  on $W_f$, see Lemma \ref{Lem:bdry-tangential-property}:
\begin{itemize}
    \item 
The geometric condition 
\eqref{eq:<nu_F,e_1>=0} implies the following analytic condition
\eq{\label{eq-bdry-tangential-property-intro}
g\left(\na_F W_f,\mu\right)=0,\quad \hbox{ along } \p\S,
}
where $g$ is the metric of $\S$ induced from embedding in $\mbR^{n+1}$, $\na_F=\bar D^2F(\nu)\circ\na$ is the $F$-anisotropic gradient on $\S$,  $\na$ is the gradient associated to $g$ 
and $\mu$ is the outer unit co-normal to $\p\S$ with respect to $\S$.
\end{itemize}
This property is essential for our purpose since it eliminates boundary terms when integrating by parts.
In the capillary case, this identity was known earlier (see Uraltseva \cite{Uralcprime} and Gerhardt \cite{Gerhardt76}) and can be checked by direct computations in terms of $u$ and its derivatives.
{
In the anisotropic case, we prove this identity in a more geometric way.
Precisely, by virtue of \eqref{eq:W-W_f-geometric}, we may relate the quantities in \eqref{eq-bdry-tangential-property-intro} to the anisotropic second fundamental form (denoted by $h_F$) and the anisotropic co-normal $\mu_F$ (defined by \eqref{eq_umF} below), and the identity follows from the fact that the unit co-normal $\mu$ is a principal direction of $h_F$.
}


As an intermediate step towards the claimed gradient estimate, by using the well-known De Giorgi-Nash-Moser iteration, we prove in Proposition \ref{Prop:Mean-value-ineq} a mean value inequality for $\log W_f$.
This relies on \eqref{eq-bdry-tangential-property-intro} and the following observation:
\begin{itemize}
    \item  
By the fact that $\S$ has vanishing anisotropic mean curvature, we are able to show that $\log W_f$ is 
a subharmonic function, in fact a weighted $F$-subharmonic function on $\S$, in the sense that
\eq{\label{ineq:psi-subharmonic-intro}
{\rm div}_\S(F^2(\nu)\na_F\log W_f)
\geq F^2(\nu)
g\left(\na\log W_f,\na_F\log W_f\right)\geq0,
}
where ${\rm div}_\S$ is the divergence operator on $(\S,g)$.
See Lemma \ref{Lem:diff-ineq-logW_f}.
\end{itemize}
The iteration argument begins with the ($L^2$-) Sobolev inequality on anisotropic minimal graphs with free boundary in $\mbR^{n+1}_+$.
For any non-negative compactly supported Lipschitz function $\varphi$ on $\S$, and any fixed constant $r>0$, Proposition \ref{Prop:Sobolev-ineq} gives
\eq{
\left(\int_\S{\varphi}^\frac{2n}{n-1}\rd\mcH^n\right)^\frac{n-1}n
\leq C_{n,F}\left(\frac1r\int_\S\varphi^2\rd\mcH^n+r\int_\S\abs{\na\varphi}^2\rd\mcH^n\right).
}
The crucial point is that the inequality does not involve boundary terms.
To achieve such an inequality, we use the classical Michael-Simon inequality \cite{Allard72,MS73}
\eq{\label{ineq:MS-intro}
\left(\int_\S{\varphi}^\frac{n}{n-1}\rd\mcH^n\right)^\frac{n-1}n
\leq C_{MS}\left(\int_{\p\S}\varphi\rd\mcH^{n-1}+\int_\S\abs{\na \varphi}\rd\mcH^n+\int_\S \varphi\abs{H}\rd\mcH^n\right),
}
 the stability inequality 
\eq{
    \int_\S\varphi^2\abs{h}^2\rd\mcH^n
    \leq C(F)\int_\S\abs{\na\varphi}^2\rd\mcH^n,
    }
as well as a trace estimate
\eq{\label{ineq:trace-intro}
\int_{\p\S}\varphi\rd\mcH^{n-1}\leq C(F)\int_\S\abs{\na\varphi}\rd\mcH^n.}
{Here $h$ and $H$ are the classical second fundamental form and mean curvature of $\S\subset\mbR^{n+1}$ } respectively.
The trace estimate follows from testing $X=-\varphi(x)e_1$ in the first variation formula for $\S$
(see Proposition \ref{Prop:1st-variation-aniso}): For any $X\in C^1_c(\mathbb{R}^{n+1};\mathbb{R}^{n+1})$,
\eq{
\int_\S F(\nu)\left(\bar{\rm div}X-\frac{\left<\nu,\bar D_{\bar DF(\nu)}X\right>}{F(\nu)}\right)\rd\mcH^n
=\int_\S H_F\left<X,\nu\right>\rd\mcH^n+\int_{\p\S}\left<X,\mu_F\right>\rd\mcH^{n-1},
}
where $\bar{\rm div}$ is the divergence operator on $\mbR^{n+1}$ and $\mu_F$ is given by
\eq{\label{eq_umF}
\mu_F
=\left<\nu_F,\nu\right>\mu-\left<\nu_F,\mu\right>\nu,
}
which we will call {\it anisotropic co-normal} of $\Sigma$.
A crucial observation is that the free boundary condition \eqref{defn:anisotropic-free-bdry-Intro} implies $\langle \mu_F, -e_1 \rangle$ has a positive lower bound, precisely,
\eq{
\left<\mu_F,-e_1\right>
\geq \min_{\mbS^n}F.
}

Using these ingredients, combined with standard tools from geometric measure theory, we obtain the mean value inequality for $\log W_f$. This is the key step in proving the gradient estimate of Theorem \ref{Thm:gradient-estimate}, which in turn leads to the Liouville theorem (Theorem \ref{Thm:Liouville}).
We emphasize that our approach relies crucially on the variational structure of the anisotropic free boundary problem, and therefore does not apply to anisotropic minimal graphs with the standard free boundary (homogeneous Neumann) condition \eqref{old_free}.

\

\noindent{\it The rest of  the paper  is organized as follows.}
In Section \ref{Sec:2} we first collect some useful facts on elliptic integrand and anisotropic geometry.
Then we prove several geometric facts for anisotropic minimal hypersurfaces/graphs with free boundary (in $\mbR^{n+1}_+$).
In Section \ref{Sec:3} we establish $L^1$-integral estimates for $\log W_f$.
In Section \ref{Sec:4} we prove the mean value inequality.
In Section \ref{Sec:5} we prove Theorem \ref{Thm:gradient-estimate} and Theorem \ref{Thm:Liouville}.

\

\noindent{\it Acknowledgments.}
This work was carried out while
W. Wei was visiting University of Freiburg supported  by the Alexander von Humboldt research fellowship. She would like to thank Institute of Mathematics, University of Freiburg for its  hospitality. 
She is also
partially supported by NSFC (Grant No. 12201288, 12571218, 11771204).
C. Xia is supported by NSFC (Grant No. 12271449, 12126102) and the Natural Science Foundation of Fujian Province of China (Grant No. 2024J011008).

\section{Preliminaries}\label{Sec:2}

Let $\bar D, \bar{\rm div},\left<\cdot,\cdot\right>$ denote the Euclidean gradient, divergence, and scalar product on $\mbR^{n+1}$; and $D,\,\, {\rm div}$ denote the Euclidean gradient, divergence on $\mbR^n$.
We use $B_r(x)$ to denote the Euclidean ball of radius $r$ and center $x\in\mbR^n$ in $\mbR^n$.
For $p\in\mbR^{n+1}$, we use $B^{n+1}_r(p)$ to denote the Euclidean ball of radius $r$ and center $p$ in $\mbR^{n+1}$.
We denote by ${x_1,\ldots,x_{n+1}}$ the Cartesian coordinates of $\mathbb{R}^{n+1}$, and by $e_i=(0,\ldots,0,1,0,\ldots,0)$ the standard basis vector with $1$ in the $i$-th position, for $i=1,\ldots,n+1$.

For a $C^2$-function $s$ on $\mbR^n$, we adopt the conventions $s_i=\frac{\p s}{\p x_i}$, $s_{ij}=\frac{\p^2s}{\p x_i\p x_j}$ for each $1\leq i,j\leq n$.
Note that \eqref{defn:AMSE} can be written as
\eq{\label{defn:AMSE-f_ij-u_ij}
f_{ij}(Du(x))u_{ij}(x)=0\text{ in }\mbR^n_+
}
with the Einstein convention.

\subsection{Elliptic integrand}
Let $F:\mbR^{n+1}\ra\mbR_+$ be a uniformly elliptic integrand.
Precisely, $F\in C^2(\mbR^{n+1}\setminus\{0\})$ and is positive, one-homogeneous function on $\mbR^{n+1}$ such that $\bar D^2 F^2(z)$ is positive definite for all $z\ne 0$.
We record the following useful properties of $F$:
\begin{itemize}
    \item For any $z\in\mbR^{n+1}\setminus\{0\}$,
    \eq{\label{eq:<DF(z),z>=F(z)}
    \left<\bar DF(z),z\right>=F(z).
    }
    \item If we denote the minimum and the maximum values of $F$ on $\mbS^n$ by
\eq{
m_F
\coloneqq\min_{\mbS^n}F,\quad
M_F
\coloneqq\max_{\mbS^n}F,
}
then we have
\eq{\label{ineq:bar-D-F-min-max}
m_F
\leq\Abs{\bar DF(z)}
\leq M_F,
}
where $\bar D F=(\frac{\partial F}{\partial z_1}, \cdots,\frac{\partial F}{\partial z_{n+1}} )$.
    \item There exist positive constants 
\eq{\label{defn:lambda-Lambda_F}
\Lambda_F
\coloneqq &\sup_{z\in\mbS^n,V\in z^\perp\setminus\{0\}}\frac{\frac{\p^2F}{\p z_\alpha\p z_\beta}(z)V^\alpha V^\beta}{\abs{V}^2},\\
\lambda_F
\coloneqq &\inf_{z\in\mbS^n,V\in z^\perp\setminus\{0\}}\frac{\frac{\p^2F}{\p z_\alpha\p z_\beta}(z)V^\alpha V^\beta}{\abs{V}^2}
>0.
}
\end{itemize}

Note that AMSE \eqref{defn:AMSE} is uniformly elliptic when $Du$ is bounded.
For completeness we include the proof.
\begin{proposition}\label{fF} Let $f(p)=F(-p, 1), p\in \mbR^n$.  Assume that $p$ is uniformly bounded, say $|p|\le C$. Then $D^2 f$ is uniformly elliptic. More precisely, 
$$  \frac{1}{(1+C^2)^2}\lambda_F |\xi|^2\le D^2f|_p(\xi,\xi)\le (1+C^2)\Lambda_F |\xi|^2.$$
\end{proposition}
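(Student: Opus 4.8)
The plan is to relate $D^2 f$ at a point $p$ to the restriction of $\bar D^2 F$ to the hyperplane orthogonal to the direction $z = (-p,1) \in \mbR^{n+1}$, and then use the bounds $\lambda_F, \Lambda_F$ from \eqref{defn:lambda-Lambda_F} together with the homogeneity of $F$. The key algebraic fact is that for a one-homogeneous function $F$, $\bar D^2 F(z)$ is degenerate precisely in the radial direction $z$, so its ``effective'' ellipticity is governed by its action on $z^\perp$; and the graph map $p \mapsto (-p,1)$ is, up to the scaling factor coming from homogeneity, an isometry onto its image relative to the relevant metrics.

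First I would write things concretely. Set $z = z(p) = (-p,1)$, so $|z|^2 = 1+|p|^2 \le 1 + C^2$. By definition $f(p) = F(z(p))$, and the chain rule gives $f_i(p) = -F_{z_i}(z)$ (derivative in the first $n$ slots of $F$) and hence $f_{ij}(p) = F_{z_i z_j}(z)$, i.e. $D^2 f|_p(\xi,\xi) = \bar D^2 F(z)(\tilde\xi, \tilde\xi)$ where $\tilde\xi = (\xi, 0) \in \mbR^{n+1}$. So the statement reduces to bounding $\bar D^2 F(z)(\tilde\xi,\tilde\xi)$ from above and below by multiples of $|\xi|^2 = |\tilde\xi|^2$, where $z$ ranges over vectors with last coordinate $1$ and $|z| \le \sqrt{1+C^2}$.

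Next I would handle the homogeneity scaling. Since $F$ is one-homogeneous, $\bar D^2 F$ is $(-1)$-homogeneous: $\bar D^2 F(z) = |z|^{-1}\, \bar D^2 F(z/|z|)$. Writing $\hat z = z/|z| \in \mbS^n$, we get $D^2 f|_p(\xi,\xi) = |z|^{-1} \bar D^2 F(\hat z)(\tilde\xi,\tilde\xi)$. Now decompose $\tilde\xi = \tilde\xi^\top + \tilde\xi^\perp$ where $\tilde\xi^\top$ is the component along $\hat z$ and $\tilde\xi^\perp \in \hat z^\perp$. Because $F$ is one-homogeneous, Euler's relation differentiated twice gives $\bar D^2 F(\hat z)\,\hat z = 0$, so $\bar D^2 F(\hat z)(\tilde\xi,\tilde\xi) = \bar D^2 F(\hat z)(\tilde\xi^\perp, \tilde\xi^\perp)$, and by \eqref{defn:lambda-Lambda_F} this lies between $\lambda_F |\tilde\xi^\perp|^2$ and $\Lambda_F |\tilde\xi^\perp|^2$. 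For the upper bound, $|\tilde\xi^\perp| \le |\tilde\xi| = |\xi|$ and $|z|^{-1} \le 1$, giving $D^2 f|_p(\xi,\xi) \le \Lambda_F |\xi|^2$; I would then note $(1+C^2)\Lambda_F$ is certainly an upper bound (the paper's stated constant is not sharp, which is fine). For the lower bound I need a lower estimate on $|\tilde\xi^\perp|^2$: the angle between $\tilde\xi = (\xi,0)$ and $\hat z = (-p,1)/|z|$ is bounded away from $0$ and $\pi$ because the last coordinate of $\hat z$ is $1/|z| \ge 1/\sqrt{1+C^2}$ while $\tilde\xi$ has last coordinate $0$; quantitatively, $|\langle \tilde\xi, \hat z\rangle| \le |p|\,|\xi|/|z| \le C|\xi|/1$ hmm — better: $|\tilde\xi^\top|^2 = \langle \tilde\xi,\hat z\rangle^2 = (\langle \xi, -p\rangle)^2/|z|^2 \le |\xi|^2 |p|^2 / |z|^2 = |\xi|^2(|z|^2-1)/|z|^2$, so $|\tilde\xi^\perp|^2 = |\xi|^2 - |\tilde\xi^\top|^2 \ge |\xi|^2/|z|^2$. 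Combined with $|z|^{-1} \le 1$ and $|z|^{-2} \ge (1+C^2)^{-1}$, this yields $D^2 f|_p(\xi,\xi) \ge \lambda_F |z|^{-1} |\tilde\xi^\perp|^2 \ge \lambda_F |z|^{-3}|\xi|^2 \ge \lambda_F (1+C^2)^{-3/2}|\xi|^2$, which is even a bit stronger than the claimed $(1+C^2)^{-2}\lambda_F|\xi|^2$; I would just record the weaker clean bound the paper states.

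The main obstacle — really the only subtle point — is pinning down the lower bound on the transverse component $|\tilde\xi^\perp|^2$, i.e. making precise that the graph direction $(\xi,0)$ cannot be too close to the degenerate radial direction $(-p,1)/|z|$ of $\bar D^2 F$; everything else is the chain rule plus homogeneity of $F$ and its derivatives. Once that geometric separation is quantified in terms of $|z|^2 = 1+|p|^2$, the two-sided bound drops out immediately, and the constants can be relaxed to the form stated in the proposition.
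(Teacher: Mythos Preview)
Your proof is correct and in fact yields sharper constants than the paper: you obtain the upper bound $\Lambda_F|\xi|^2$ and the lower bound $(1+C^2)^{-3/2}\lambda_F|\xi|^2$, both of which imply the statement as written. The underlying idea is the same as the paper's --- reduce $D^2 f|_p$ to $\bar D^2 F|_{(-p,1)}$ via the chain rule, exploit that $\bar D^2 F(z)$ annihilates the radial direction $z$, and apply the ellipticity bounds \eqref{defn:lambda-Lambda_F} on $z^\perp$ --- but the execution differs. The paper chooses an explicit (non-orthonormal) basis $v_i=e_i+p_ie_{n+1}$ of $z^\perp$, rewrites each $e_i$ in this basis, and then estimates the resulting matrix entries; you instead rescale to the unit sphere using $(-1)$-homogeneity of $\bar D^2 F$ and orthogonally project $\tilde\xi=(\xi,0)$ onto $\hat z^\perp$, bounding $|\tilde\xi^\perp|^2\ge|\xi|^2/|z|^2$ directly. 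Your route is more streamlined and keeps tighter track of the powers of $|z|=\sqrt{1+|p|^2}$, which is why your constants are better; the paper's basis computation is more hands-on but slightly looser.
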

\begin{proof}
Recall that $\{e_i\}_{i=1,\cdots, n+1}$, is the standard unit basis vector of $\mbR^{n+1}$.
We put 
\begin{eqnarray*}
&&v_i=e_i+p_ie_{n+1}, i=1,\cdots, n, \\ &&v_{n+1}=-\sum\limits_{i=1}^n p_ie_i+e_{n+1}.
\end{eqnarray*}
Then $v_i\perp v_{n+1}$ for all $ i=1,\cdots, n$.
By simple computation, we get
\begin{eqnarray*}
&&e_i=\sum_{k=1}^n\left(\delta_{ik}-\frac{p_ip_k}{1+|p|^2}\right)v_k -p_iv_{n+1}, \quad i=1,\cdots, n,
\\ &&e_{n+1}=\frac{1}{1+|p|^2}\sum\limits_{i=1}^n p_i v_i+v_{n+1}.
\end{eqnarray*}
By $1$-homogeneity of $F$, $$\bar D^2F|_{(-p, 1)}(v_{n+1},\cdot)=0.$$
Hence for $i, j=1,\cdots, n$, we have
\begin{eqnarray*}
D^2f|_p(e_i, e_j)&=&\bar{D}^2F|_{(-p, 1)}(e_i,e_j)
\\&=&\sum_{k, l=1}^n \left(\delta_{ik}-\frac{p_ip_k}{1+|p|^2}\right)\left(\delta_{jl}-\frac{p_jp_l}{1+|p|^2}\right)\bar{D}^2F|_{(-p, 1)}(v_k, v_l)
\\&=&\sum_{k, l=1}^n \left(\delta_{ik}-\frac{p_ip_k}{1+|p|^2}\right)\left(\delta_{jl}-\frac{p_jp_l}{1+|p|^2}\right) \sqrt{1+p_k^2}\sqrt{1+p_l^2}\bar{D}^2F|_{(-p, 1)}\left(\frac{v_k}{|v_k|}, \frac{v_l}{|v_l|}\right).
\end{eqnarray*}
Recall that $$\lambda_F\delta_{kl}\le \bar{D}^2F|_{(-p, 1)}\left(\frac{v_k}{|v_k|}, \frac{v_l}{|v_l|}\right)\le \Lambda_F\delta_{kl}.$$
On the other hand,  if $|p|\le C$, then
$$ \frac{1}{1+C^2}\delta_{ik}\le\delta_{ik}-\frac{p_ip_k}{1+|p|^2}\le \delta_{ik}.$$ 
It follows that
\begin{eqnarray*}
  \frac{1}{(1+C^2)^2}\lambda_F|\xi|^2\le \sum_{i,j=1}^nD^2f|_p(e_i, e_j)\xi_i\xi_j\le (1+C^2)\Lambda_F|\xi|^2.
\end{eqnarray*}
\end{proof}

\subsection{Anisotropic free boundary minimal hypersurfaces}\label{Sec:2-free-bdry-minimal}
We first collect some well-known facts.
Consider any $C^2$-hypersurface embedded in $\mbR^{n+1}_+$ with a chosen unit normal vector field along $\S$, denoted by $\nu$.
Suppose that $\S$ has non-empty boundary $\p\S$, which lies entirely in $\p\mbR^{n+1}_+$, so that $\S$ meets $\p\mbR^{n+1}_+$ transversally along $\p\S$.
We denote by $\mu$  the outer unit co-normal to $\p\S$ with respect to $\S$, and then by $\bar\nu$ the outer unit co-normal to $\p\S$ in $\p\mbR^{n+1}_+$, such that (noting that $-e_1$ is the outer unit normal to $\p\mbR^{n+1}_+$) the bases $\{\nu,\mu\}$ and $\{\bar\nu,-e_1\}$ have the same orientation in the normal bundle of $\p\S$ (as a co-diminsion $2$ submanifold in $\mbR^{n+1}$).
Hence it is easy to see the following relations
\eq{\label{eq:mu-relation}
\mu=&-\left<\nu,-e_1\right>\bar\nu+\left<\mu,-e_1\right>(-e_1),\\
\nu=&\left<\mu,-e_1\right>\bar\nu+\left<\nu,-e_1\right>(-e_1).
}
Equivalently,
\eq{\label{eq:bar-nu-relation}
\bar\nu=&-\left<\nu,-e_1\right>\mu+\left<\mu,-e_1\right>\nu,\\
-e_1=&\left<\mu,-e_1\right>\mu+\left<\nu,-e_1\right>\nu.
}

Let $g$ denote the metric on $\Sigma$ induced from embedding in $\mbR^{n+1}$.
Denote by $\na$ the  Levi-Civita connection and  by ${\rm div}_\S$ the  divergence operator on  $(\S,g)$.
We use $h$ to denote the second fundamental form of $(\S,g)\subset\mbR^{n+1}$, i.e.,
\eq{
h(X,Y)
=\left<\bar D_X\nu,Y\right>,\quad\forall X,Y\in T\S.
}

Let $\nu_F=\bar DF(\nu)$ be the \textit{anisotropic normal} of $\S$ and 
$A_F=\bar D^2F(\nu)$, where $\nu$ is the upwards pointing unit normal along $\S$.
Since $F$ is 1-homogeneous, it is well-known that
\eq{
\bar D^2F(\nu)(\cdot,\nu)=0,
}
and hence $A_F$ can be identified as a positive definite symmetric $2$-tensor on $\S$.
Hence, we define $h_F\coloneqq A_F\circ h$ as the \textit{anisotropic second fundamental form} on $\S$; and $H_F\coloneqq{\rm tr}_g(h_F)$ as the \textit{anisotropic mean curvature}.
Note also that $h_F$ satisfies
\eq{\label{defn:h_F}
h_F(X,Y)
=\left<\bar D_X\nu_F,Y\right>
=(A_F\circ h)(X,Y),\quad\forall X,Y\in T\S.
}

Let $A^{-1}_F$ denote the inverse of $A_F$, the \textit{$F$-anisotropic metric} on $\S$ is then defined as
\eq{
g_F(X,Y)
=g(A^{-1}_FX,Y),\quad X,Y\in T\S.
}
The \textit{$F$-anisotropic gradient} of a function $\phi$ defined on $\S$ is given by
\eq{
\na_F\phi
=A_F\nabla \phi
=\bar D^2F(\nu)\na\phi,
}
so that the length of $\na_F\phi$ under the $F$-anisotropic metric is given by 
\eq{
g_F\left(\na_F\phi,\na_F\phi\right)
=g(\na\phi,\na_F\phi)=\langle \na \phi, \bar{D}^2 F(\nu) \na \phi\rangle.
}
The \textit{$F$-Laplacian} is then given by
\eq{
\De_F\phi
={\rm div}_\S(\na_F\phi)
={\rm div}_\S\left(\bar D^2F(\nu)\na\phi\right).
}

\begin{definition}\label{Defn:anisotropic-free-bdry}
\normalfont
We call $\S$ an anisotropic minimal hypersurface if $H_F=0$ on $\S$.
We say that $\S$ has free boundary in the anisotropic sense in $\mbR^{n+1}_+$, if $\left<\nu_F,e_1\right>=0$ on $\p\S\subset\p\mbR^{n+1}_+$.
\end{definition}

\begin{lemma}[{\cite[Proposition 4.1]{GX25}}]\label{Lem:principal-direction-mu}
Let $\S$ be a hypersurface having free boundary in the anisotropic sense in $\mbR^{n+1}_+$, then the outer unit co-normal $\mu$ is an anisotropic principal direction of $\p\S$ in $\S$, i.e., $h_F(\mu,\tau)=0$ for any $\tau\in T(\p\S)$. 
\end{lemma}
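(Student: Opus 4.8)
The plan is to prove the Lemma by a pointwise computation along $\p\S$, combining the free boundary condition $\langle\nu_F,e_1\rangle=0$ with the structure of $A_F$ and the Codazzi-type symmetry of $h_F$. First I would fix a point $p\in\p\S$ and choose an adapted orthonormal frame: pick $\tau\in T_p(\p\S)$ arbitrary, so that $\{\tau$'s spanning $T_p(\p\S),\,\mu\}$ is an orthonormal basis of $T_p\S$, and recall from \eqref{eq:mu-relation}--\eqref{eq:bar-nu-relation} the decomposition of $e_1$ (equivalently $-e_1$) into its $\mu$- and $\nu$-components, together with the fact that $\bar\nu\in T(\p\mbR^{n+1}_+)$ and $T(\p\S)\subset T(\p\mbR^{n+1}_+)$.

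The key step is to rewrite $h_F(\mu,\tau)$ using \eqref{defn:h_F}: $h_F(\mu,\tau)=\langle\bar D_\mu\nu_F,\tau\rangle$. Since $\nu_F=\bar DF(\nu)$ and $\langle\nu_F,e_1\rangle\equiv0$ \emph{along all of} $\p\S$, differentiating this identity in the direction $\tau\in T(\p\S)$ gives $\langle\bar D_\tau\nu_F,e_1\rangle=0$ (as $e_1$ is constant). By symmetry of the anisotropic second fundamental form — $h_F$ is a symmetric bilinear form on $T\S$, which follows from $h_F=A_F\circ h$ with $A_F=\bar D^2F(\nu)$ symmetric and $h$ symmetric, though one must be slightly careful since the composition of two symmetric tensors need not be symmetric; here the correct statement is that $h_F$ is self-adjoint with respect to $g_F$, equivalently $\langle\bar D_X\nu_F,Y\rangle=\langle\bar D_Y\nu_F,X\rangle$ — one gets $h_F(\mu,\tau)=h_F(\tau,\mu)=\langle\bar D_\tau\nu_F,\mu\rangle$. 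Now expand $\mu$ via \eqref{eq:mu-relation}: $\mu=-\langle\nu,-e_1\rangle\bar\nu+\langle\mu,-e_1\rangle(-e_1)$. Thus $\langle\bar D_\tau\nu_F,\mu\rangle=-\langle\nu,-e_1\rangle\langle\bar D_\tau\nu_F,\bar\nu\rangle+\langle\mu,-e_1\rangle\langle\bar D_\tau\nu_F,-e_1\rangle$. The second term vanishes by the differentiated free boundary condition just derived. For the first term, note $\bar\nu\in T(\p\mbR^{n+1}_+)$ and $\nu_F$ restricted to $\p\S$ lies in $\p\mbR^{n+1}_+$ (since $\langle\nu_F,e_1\rangle=0$); one wants to argue $\langle\bar D_\tau\nu_F,\bar\nu\rangle=0$ as well. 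This is where the anisotropic normal being tangent to the wall, combined with $\tau$ tangent to $\p\S$, should force the derivative to have no $\bar\nu$-component either — the cleanest route is to observe that $\nu_F$ along $\p\S$ is a unit-length-irrelevant vector field valued in the wall, and $h_F(\mu,\tau)$ measures precisely the $\mu$-component of its variation.

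Alternatively, and perhaps more robustly, I would follow the reference \cite{GX25} directly: express everything in terms of the anisotropic co-normal $\mu_F$ from \eqref{eq_umF}, namely $\mu_F=\langle\nu_F,\nu\rangle\mu-\langle\nu_F,\mu\rangle\nu$, and show that the free boundary condition makes $\mu_F$ proportional to $\bar\nu$ modulo the $(-e_1)$-direction, so that $\mu$ diagonalizes the shape operator restricted to the boundary. Concretely: $h_F(\mu,\tau)=\langle A_F(\bar D_\mu\nu),\tau\rangle=\langle\bar D_\mu\nu,A_F\tau\rangle$; using the Weingarten-type relation on the wall $\p\mbR^{n+1}_+$ (which is totally geodesic and flat) and the condition $\langle A_F\nu_{\text{dummy}},\cdot\rangle$ structure, one reduces to showing a single scalar identity. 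The main obstacle, as I see it, is correctly handling the non-self-adjointness subtlety of $h_F=A_F\circ h$ with respect to the round metric $g$ versus the anisotropic metric $g_F$: one must phrase the "symmetry" as $g_F$-self-adjointness, i.e. $g_F(h_F X,Y)=g(hX,Y)$ is symmetric, and then the statement "$\mu$ is an anisotropic principal direction" should be interpreted as $h_F\mu\parallel\mu$ in the appropriate sense, equivalently $g(h_F\mu,\tau)=g_F(h_F\mu,A_F^{-1}\tau)$ vanishing for all $\tau\in T(\p\S)$. Getting this bookkeeping right, and verifying that the differentiated boundary condition kills exactly the surviving term, is the crux; the rest is linear algebra in the $2$-dimensional normal bundle of $\p\S$ using \eqref{eq:mu-relation}--\eqref{eq:bar-nu-relation}.
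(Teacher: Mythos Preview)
Your core idea---differentiate the free boundary condition $\langle\nu_F,e_1\rangle=0$ along $\tau\in T(\p\S)$ to get $\langle\bar D_\tau\nu_F,e_1\rangle=0$---is exactly the paper's. But you then take a detour that leaves a genuine gap: you decompose $\mu$ into its $\bar\nu$- and $(-e_1)$-components via \eqref{eq:mu-relation} and are left with the term $\langle\bar D_\tau\nu_F,\bar\nu\rangle$, which you cannot kill (and your heuristic ``$\nu_F$ is wall-valued so its tangential derivative has no $\bar\nu$-component'' is not a proof---$\bar\nu$ is tangent to the wall, not normal to it).

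The paper does the opposite decomposition, and this is the missing step in your argument: write $e_1=\langle\mu,e_1\rangle\mu+\langle\nu,e_1\rangle\nu$ from \eqref{eq:bar-nu-relation} and plug into $\langle\bar D_\tau\nu_F,e_1\rangle=0$. The $\nu$-term drops out for free because $\bar D_\tau\nu_F=\bar D^2F(\nu)[\bar D_\tau\nu]$ and $\bar D^2F(\nu)(\cdot,\nu)=0$ by one-homogeneity, so $\bar D_\tau\nu_F\in T\S$. This yields $\langle\mu,e_1\rangle\,h_F(\tau,\mu)=0$, and transversality ($\langle\mu,e_1\rangle\neq0$) finishes it in one line. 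Your lengthy discussion of the $g$- versus $g_F$-self-adjointness of $h_F$, and the alternative route through $\mu_F$, are unnecessary once you use this observation; the whole proof is three lines.
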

\begin{proof}
Using \eqref{eq:bar-nu-relation} we compute
\eq{
0
=\na_\tau\left<\nu_F,e_1\right>
=\left<\bar D_\tau\nu_F,e_1\right>
=&\left<\bar D_\tau\nu_F,\left<\mu,e_1\right>\mu+\left<\nu,e_1\right>\nu\right>\\
=&\left<\mu,e_1\right>\left<\bar D_\tau\nu_F,\mu\right>+\left<\nu,e_1\right>\left<\bar D_\tau\nu_F,\nu\right>
=\left<\mu,e_1\right>h_F\left(\mu,\tau\right).
}
Since $\S$ meets $\p\mbR^{n+1}_+$ transversely, we have $\left<\mu,e_1\right>\neq0$ on $\p\S$, the assertion then follows.
\end{proof}

However, on some occasions it is not easy to use $\mu$, but more convenient to use the following modified one.

\begin{definition}
\normalfont
Let $\S$ be an embedded hypersurface in $\mbR^{n+1}$ with non-empty boundary $\p\S$,
define the \textit{anisotropic co-normal} along $\p\S$ as
\eq{
\mu_F
\coloneqq\left<\nu_F,\nu\right>\mu-\left<\nu_F,\mu\right>\nu,
}
which clearly lies in the normal bundle of $\p\S$.
\end{definition}
It was observed recently in \cite{GX25} that by the definitions of $\mu_F,\nu_F$, and the relation \eqref{eq:bar-nu-relation},  free boundary in the anisotropic sense can be characterized in terms of $\mu_F$ as follows:
\begin{lemma}[{\cite[Proposition 3.2]{GX25}}]
Let $\S$ be an embedded hypersurface in $\mbR^{n+1}_+$ with non-empty boundary $\p\S$ lying in $\p\mbR^{n+1}_+$, such that $\S$ meets $\p\mbR^{n+1}_+$ transversally.
Then
along $\p\S$, one has
\eq{\label{eq:mu_F-relation}
\left<\mu_F,\bar\nu\right>
=&-\left<\nu_F,-e_1\right>,\\
\left<\mu_F,-e_1\right>
=&\left<\nu_F,\bar\nu\right>.
}
\end{lemma}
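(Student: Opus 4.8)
The final statement to prove is the lemma attributed to \cite[Proposition 3.2]{GX25}, namely the relations
\eq{
\left<\mu_F,\bar\nu\right>=-\left<\nu_F,-e_1\right>,\qquad
\left<\mu_F,-e_1\right>=\left<\nu_F,\bar\nu\right>
}
along $\p\S$, where $\mu_F=\left<\nu_F,\nu\right>\mu-\left<\nu_F,\mu\right>\nu$.

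\textbf{Approach.} The plan is a direct linear-algebra computation in the two-dimensional normal bundle of $\p\S$ inside $\mbR^{n+1}$, using the two orthonormal bases $\{\nu,\mu\}$ and $\{\bar\nu,-e_1\}$ that span this plane and share the same orientation. Since $\nu_F=\bar DF(\nu)$ need not lie in this normal plane, the first point to record is that only the component of $\nu_F$ in $\mathrm{span}\{\nu,\mu\}$ matters for the stated pairings, because $\mu_F$ by construction lies in that plane; concretely $\left<\mu_F,W\right>=\left<\nu_F,\nu\right>\left<\mu,W\right>-\left<\nu_F,\mu\right>\left<\nu,W\right>$ for any $W$, so one needs only $\left<\nu_F,\nu\right>$, $\left<\nu_F,\mu\right>$ and the four inner products among $\{\nu,\mu\}$ and $\{\bar\nu,-e_1\}$.

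\textbf{Key steps, in order.} First I would substitute $\bar\nu=-\left<\nu,-e_1\right>\mu+\left<\mu,-e_1\right>\nu$ from \eqref{eq:bar-nu-relation} into $\left<\mu_F,\bar\nu\right>$ and expand, obtaining
\eq{
\left<\mu_F,\bar\nu\right>
=\left<\nu_F,\nu\right>\bigl(-\left<\nu,-e_1\right>\bigr)-\left<\nu_F,\mu\right>\left<\mu,-e_1\right>,
}
using $\left<\mu,\mu\right>=\left<\nu,\nu\right>=1$ and $\left<\mu,\nu\right>=0$. Then I would observe that the right-hand side is exactly $-\left<\nu_F,\left<\mu,-e_1\right>\mu+\left<\nu,-e_1\right>\nu\right>$, and by the second line of \eqref{eq:bar-nu-relation} the bracket equals $-e_1$; hence $\left<\mu_F,\bar\nu\right>=-\left<\nu_F,-e_1\right>$, which is the first identity. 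For the second, substitute $-e_1=\left<\mu,-e_1\right>\mu+\left<\nu,-e_1\right>\nu$ into $\left<\mu_F,-e_1\right>$, expand using orthonormality of $\{\nu,\mu\}$ to get $\left<\nu_F,\nu\right>\left<\mu,-e_1\right>-\left<\nu_F,\mu\right>\left<\nu,-e_1\right>$, and recognize this as $\left<\nu_F,-\left<\nu,-e_1\right>\mu+\left<\mu,-e_1\right>\nu\right>=\left<\nu_F,\bar\nu\right>$ by the first line of \eqref{eq:bar-nu-relation}. This proves the second identity. Throughout one uses that $\S$ meets $\p\mbR^{n+1}_+$ transversally only implicitly, to guarantee that $\{\nu,\mu\}$ and $\{\bar\nu,-e_1\}$ are genuinely two distinct orthonormal frames of the same plane; no nondegeneracy beyond that is needed.

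\textbf{Main obstacle.} There is essentially no analytic obstacle here — the content is purely the bookkeeping of change of orthonormal frame in a $2$-plane, together with the algebraic definition of $\mu_F$. The one place to be careful is the sign/orientation convention: one must use the specific forms of the relations \eqref{eq:mu-relation} and \eqref{eq:bar-nu-relation} as stated (which fix the orientation of $\{\nu,\mu\}$ relative to $\{\bar\nu,-e_1\}$), since a sign error there propagates directly into the claimed identities. Once those relations are taken as given, both identities fall out of two one-line substitutions.
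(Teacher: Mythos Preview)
Your proposal is correct. The paper does not include its own proof of this lemma; it is stated as a citation of \cite[Proposition 3.2]{GX25} and used without further argument. Your direct computation using the change-of-basis relations \eqref{eq:bar-nu-relation} is exactly the natural proof, and the point you flag about $\nu_F$ possibly having a tangential component is handled correctly: since $-e_1$ and $\bar\nu$ both lie in ${\rm span}\{\nu,\mu\}$, only the $\nu$- and $\mu$-components of $\nu_F$ enter the pairings, so the identities follow from the two one-line substitutions you describe.
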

From its definition, it is clear that $\mu_F$ always lies in the normal space of $\p \S$, which is spanned by $\mu$ and $\nu$, while in general the anisotropic normal $\nu_F$ does not. Thus it is more convenient to use $\mu_F$ instead of $\nu_F$ in the anisotropic setting.
\begin{lemma}\label{Lem:mu_F-perpendicular}
A hypersurface has free boundary in the anisotropic sense in $\p\mbR^{n+1}_+$ if and only if 
\eq{
\mu_F\perp\p\mbR^{n+1}_+.
}
\end{lemma}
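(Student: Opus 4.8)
The statement is a direct consequence of the relation \eqref{eq:mu_F-relation} together with a bookkeeping of which vector spaces are involved, so the strategy is simply to reduce ``$\mu_F\perp\p\mbR^{n+1}_+$'' to a single scalar equation and match it with the defining condition of free boundary in the anisotropic sense. First I would recall that, by its very definition $\mu_F=\left<\nu_F,\nu\right>\mu-\left<\nu_F,\mu\right>\nu$, the vector $\mu_F$ lies in the normal bundle $N(\p\S)$ of $\p\S$ viewed as a codimension-$2$ submanifold of $\mbR^{n+1}$; this plane is spanned by the orthonormal pair $\{\nu,\mu\}$, equivalently (by \eqref{eq:mu-relation}--\eqref{eq:bar-nu-relation}) by the orthonormal pair $\{\bar\nu,-e_1\}$.

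Next I would unwind what ``$\mu_F\perp\p\mbR^{n+1}_+$'' means. Along $\p\S$ one has the orthogonal splitting $T_p(\p\mbR^{n+1}_+)=T_p(\p\S)\oplus\mbR\,\bar\nu$ for $p\in\p\S$. Since $\mu_F\in N(\p\S)$, it is automatically orthogonal to every vector in $T_p(\p\S)$; hence $\mu_F\perp\p\mbR^{n+1}_+$ if and only if, in addition, $\left<\mu_F,\bar\nu\right>=0$. Thus the geometric perpendicularity condition collapses to the single scalar identity $\left<\mu_F,\bar\nu\right>=0$.

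Finally I would invoke \eqref{eq:mu_F-relation}, which gives $\left<\mu_F,\bar\nu\right>=-\left<\nu_F,-e_1\right>=\left<\nu_F,e_1\right>$ along $\p\S$. Combining the two reductions, $\mu_F\perp\p\mbR^{n+1}_+$ holds along $\p\S$ if and only if $\left<\nu_F,e_1\right>=0$ along $\p\S$, which by Definition \ref{Defn:anisotropic-free-bdry} is precisely the statement that $\S$ has free boundary in the anisotropic sense in $\mbR^{n+1}_+$. There is no real analytic obstacle here; the only point requiring a little care is the observation that $\mu_F$ is \emph{a priori} tangent-orthogonal to $\p\S$, so that the whole perpendicularity requirement reduces to the one component along $\bar\nu$ — once this is noted, the lemma follows immediately from the already-established relation \eqref{eq:mu_F-relation}.
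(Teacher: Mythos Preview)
Your proposal is correct and follows essentially the same approach as the paper: both arguments use that $\mu_F$ lies in the normal space of $\p\S$ (so perpendicularity to $\p\mbR^{n+1}_+$ reduces to $\left<\mu_F,\bar\nu\right>=0$) and then invoke \eqref{eq:mu_F-relation} to identify this with the free boundary condition $\left<\nu_F,e_1\right>=0$. Your version is somewhat more explicit in spelling out the splitting $T_p(\p\mbR^{n+1}_+)=T_p(\p\S)\oplus\mbR\,\bar\nu$, but the content is the same.
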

\begin{proof}
It follows 
from \eqref{eq:mu_F-relation} and the free boundary condition \eqref{eq:<nu_F,e_1>=0} that 
$\left<\mu_F,\bar\nu\right>=0$. On the other hand, $\mu_F$ lies in the normal space of $\p\S$. The assertion follows from the above two facts.
\end{proof}
Similar to the classical co-normal $\mu$, the anisotropic co-normal $\mu_F$ naturally appears in integration by parts over hypersurfaces with boundary, particularly in the first variation of the anisotropic surface energy; see \cite{Rosales23,GX25}.
\begin{proposition}\label{Prop:1st-variation-aniso}
Let $\S$ be a $C^2$-hypersurface embedded in $\mbR^{n+1}$ with possibly non-empty boundary $\p\S$, and choose one of the unit normal vector fields along $\S$, denoted by $\nu$.
For any $X\in C^1_c(\mbR^{n+1};\mbR^{n+1})$, there holds
\eq{\label{eq:1st-variation-anisotropic}
\int_\S F(\nu)
{\rm div}_{\S,F}X\rd\mcH^n
=\int_\S H_F\left<X,\nu\right>\rd\mcH^n+\int_{\p\S}\left<X,\mu_F\right>\rd\mcH^{n-1},
}
where
\eq{
{\rm div}_{\S,F}X
\coloneqq
\bar{\rm div}X-\frac{\left<\nu,\bar D_{\bar DF(\nu)}X\right>}{F(\nu)}
}
is called the $F$-divergence of $X$ with respect to $\S$.
\end{proposition}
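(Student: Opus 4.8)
The plan is to deduce \eqref{eq:1st-variation-anisotropic} from the ordinary divergence theorem on the Riemannian manifold-with-boundary $(\S,g)$, after absorbing the anisotropy into a suitably chosen \emph{tangential} vector field. One could equivalently compute $\frac{\rd}{\rd t}\big|_{t=0}\mcA_F(\Phi_t(\S))$ along the flow $\Phi_t$ generated by $X$, but the divergence-theorem route is cleaner and entirely self-contained.

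Given $X\in C^1_c(\mbR^{n+1};\mbR^{n+1})$, I would introduce the vector field along $\S$
\[
Y\coloneqq F(\nu)X-\left<X,\nu\right>\nu_F ,
\]
which is $C^1$ with $\spt Y\subset\spt X$ compact (here $\nu\in C^1$ since $\S$ is $C^2$, and $F\in C^2$). Two elementary observations make $Y$ the right object. First, $\left<Y,\nu\right>=F(\nu)\left<X,\nu\right>-\left<X,\nu\right>\left<\nu_F,\nu\right>=0$ by the Euler relation \eqref{eq:<DF(z),z>=F(z)}, so $Y$ is tangent to $\S$. Second, along $\p\S$, using $\left<\nu,\mu\right>=0$ and the definition \eqref{eq_umF} of $\mu_F$, one computes $\left<Y,\mu\right>=F(\nu)\left<X,\mu\right>-\left<X,\nu\right>\left<\nu_F,\mu\right>=\left<X,\mu_F\right>$. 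Hence, once the pointwise identity
\[
{\rm div}_\S Y=F(\nu)\,{\rm div}_{\S,F}X-H_F\left<X,\nu\right>\quad\text{on }\S \qquad (\star)
\]
is established, integrating over $\S$ and applying the divergence theorem to the compactly supported tangential field $Y$ yields $\int_\S\bigl(F(\nu)\,{\rm div}_{\S,F}X-H_F\left<X,\nu\right>\bigr)\,\rd\mcH^n=\int_{\p\S}\left<Y,\mu\right>\,\rd\mcH^{n-1}=\int_{\p\S}\left<X,\mu_F\right>\,\rd\mcH^{n-1}$, which is precisely \eqref{eq:1st-variation-anisotropic}.

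It remains to prove $(\star)$, and this is the only genuinely computational point. Fixing a local orthonormal frame $e_1,\dots,e_n$ of $T\S$, I would expand ${\rm div}_\S Y=\sum_i\left<\bar D_{e_i}Y,e_i\right>$ via the Leibniz rule and sort the resulting terms into three groups. Group (i): the term $F(\nu)\sum_i\left<\bar D_{e_i}X,e_i\right>$ together with the piece $-\sum_i\left<\nu_F,e_i\right>\left<\nu,\bar D_{e_i}X\right>$ coming from $\bar D_{e_i}\bigl(\left<X,\nu\right>\bigr)\nu_F$; using $\bar{\rm div}X=\sum_i\left<\bar D_{e_i}X,e_i\right>+\left<\bar D_\nu X,\nu\right>$ and decomposing $\nu_F$ into its tangential part $\sum_i\left<\nu_F,e_i\right>e_i$ and its normal part $F(\nu)\nu$ (again by \eqref{eq:<DF(z),z>=F(z)}), this group reassembles exactly into $F(\nu)\,{\rm div}_{\S,F}X$. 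Group (ii): the term $-\left<X,\nu\right>\sum_i\left<\bar D_{e_i}\nu_F,e_i\right>=-\left<X,\nu\right>{\rm tr}_g h_F=-H_F\left<X,\nu\right>$, by \eqref{defn:h_F}. Group (iii): the two Weingarten-type terms $\sum_i e_i\bigl(F(\nu)\bigr)\left<X,e_i\right>$ and $-\sum_{i,k}h_{ik}\left<\nu_F,e_i\right>\left<X,e_k\right>$, which cancel because $e_i\bigl(F(\nu)\bigr)=\left<\nu_F,\bar D_{e_i}\nu\right>=\sum_k h_{ik}\left<\nu_F,e_k\right>$ and $h$ is symmetric. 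This last cancellation of curvature terms is the main (indeed only) delicate point; everything else is routine bookkeeping, and once $(\star)$ holds the identity \eqref{eq:1st-variation-anisotropic} follows as described above.
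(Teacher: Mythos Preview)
Your proof is correct and follows essentially the same route as the paper. Both arguments introduce the identical tangential field $Y=F(\nu)X-\left<X,\nu\right>\nu_F$ (the paper writes the coefficient as $\left<\nu_F,\nu\right>$, which equals $F(\nu)$), establish the same pointwise identity ${\rm div}_\S Y=F(\nu)\,{\rm div}_{\S,F}X-H_F\left<X,\nu\right>$, and then integrate; the only difference is organizational—your frame expansion with Groups (i)--(iii) replaces the paper's slightly more abstract manipulation via the auxiliary term $I=h(X^T,\nu_F^T)$—but the decisive cancellation through the symmetry of $h$ is the same in both.
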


\begin{proof}
We include the proof for completeness.

Denote by $\{\tau_1(x),\ldots\tau_n(x)\}$ an orthonormal basis of $T_x\S$, for any $X\in C^1_c(\mbR^{n+1},\mbR^{n+1})$, we observe that the vector field
\eq{
Y\coloneqq\left<\nu_F,\nu\right>X-\left<\nu,X\right>\nu_F
}
is tangential along $\S$, since
\eq{
\left<\left<\nu_F,\nu\right>X-\left<\nu,X\right>\nu_F,\nu\right>
=F(\nu)\left<X,\nu\right>-F(\nu)\left<\nu,X\right>=0.
}
Moreover, we claim that the $F$-divergence of $X$ with respect to $\S$ is exactly given by
\eq{
F(\nu){\rm div}_{\S,F}(X)
={\rm div}_\S Y+\left<\nu,X\right>{\rm div}_\S\nu_F.
}
To see this, we rewrite the RHS as
\eq{\label{eq:integral-formu-RHS}
\left<\nu_F,\nu\right>{\rm div}_\S X+{\rm div}_\S\left(-\left<\nu,X\right>\nu_F\right)+\underbrace{{\rm div}_\S\left(\left<\nu_F,\nu\right>X\right)-\left<\nu_F,\nu\right>{\rm div}_\S X}_{\coloneqq I}+\left<\nu,X\right>{\rm div}_\S\nu_F.
}
Note that by symmetry of the classical second fundamental form $h$, we find
\eq{
I=\sum_{i=1}^n\left<\nu_F,\bar D_{\tau_i}\nu\right>\left<X,\tau_i\right>
=h(X^T,\nu_F^T)
=h(\nu_F^T,X^T)
=\sum_{i=1}^n\left<\bar D_{\tau_i}\nu,X\right>\left<\nu_F,\tau_i\right>.
}
One can also find that
\eq{
\left<\nu_F,-\sum_{i=1}^n\left<\nu,\bar D_{\tau_i}X\right>\tau_i\right>
=&\sum_{i=1}^n\left\{-\left<\bar D_{\tau_i}\left(\left<\nu,X\right>\nu_F\right),\tau_i\right>+\left<\bar D_{\tau_i}\nu,X\right>\left<\nu_F,\tau_i\right>+\left<\nu,X\right>\left<\bar D_{\tau_i}\nu_F,\tau_i\right>\right\}\\
=&{\rm div}_\S\left(-\left<\nu,X\right>\nu_F\right)+\sum_{i=1}^n\left<\bar D_{\tau_i}\nu,X\right>\left<\nu_F,\tau_i\right>+\left<\nu,X\right>{{\rm div}_\S\left(\nu_F\right)}.
}
Hence, plugging the above two formulas into \eqref{eq:integral-formu-RHS}, the RHS can be further rewritten as
\eq{
&\left<\nu_F,\nu\right>{\rm div}_\S X-\sum_{i=1}^n\left<\nu,\bar D_{\tau_i}X\right>\left<\nu_F,\tau_i\right>\\
=&\left<\nu_F,\nu\right>\bar{\rm div}X-\left<\nu,\bar D_{\left<\nu_F,\nu\right>\nu}X\right>-\sum_{i=1}^n\left<\nu,\bar D_{\left<\nu_F,\tau_i\right>\tau_i}X\right>\\
=&\left<\nu_F,\nu\right>\bar{\rm div}X-\left<\nu, \bar D_{\nu_F}X\right>
=F(\nu){\rm div}_{\S,F}(X),
}
proving the claimed fact.

Integrating ${\rm div}_\S Y+\left<\nu,X\right>{\rm div}_\S\nu_F$ over $\S$, then using \eqref{defn:h_F} and the tangential divergence theorem, we deduce
\eq{
\int_\S{\rm div}_\S Y+\left<\nu,X\right>{\rm div}_\S\nu_F\rd\mcH^n
=&\int_\S H_F\left<X,\nu\right>\rd\mcH^n+\int_{\p\S}\left<\left<\nu_F,\nu\right>X-\left<\nu,X\right>\nu_F,\mu\right>\rd\mcH^{n-1}\\
=&\int_\S H_F\left<\nu,X\right>\rd\mcH^n+\int_{\p\S}\left<X,\underbrace{\left<\nu_F,\nu\right>\mu-\left<\nu_F,\mu\right>\nu}_{=\mu_F}\right>\rd\mcH^{n-1},
}
which completes the proof with the claim.
\end{proof}

\subsection{Anisotropic minimal graph}\label{Sec:2-ani-minimal-graph}
Let $u$ be a $C^2$-function on $\mbR^n_+$.
Using a standard calibration argument, one sees that for $u$ being a critical point of $\mathscr{A}_F$ (recalling \eqref{defn:mathscr-A}), its graph $\S$ is a \textit{$F$-stable anisotropic minimal hypersurface with free boundary} in $\mbR^{n+1}_+$, i.e.,
\begin{enumerate}
    \item (minimal) its anisotropic mean curvature satisfies $H_F\equiv0$;
    \item (free boundary) along the boundary $\p\S$, the anisotropic normal $\nu_F$ satisfies
    \eq{
    \left<\nu_F,e_1\right>=0,\quad\text{on }\p\S\subset\p\mbR^{n+1}_+;
    }
    \item (stable) $\S$ is $F$-stable, in the sense that for any $\phi\in C^2_c(\S)$, the stability inequality holds (c.f., \cite[Proposition 3.9]{GX25}):
    \eq{\label{ineq:stability-GX25-1}
    -\int_\S\phi J_F\phi\rd\mcH^n+\int_{\p\S}\left(g\left(\na_F\phi,\mu\right)-q_F\phi\right)\phi
    \geq0,
    }
    where $J_F$ is the \textit{$F$-Jacobi operator} given by
    \eq{
    J_F\phi
    \coloneqq\De_F\phi+{\rm tr}_g\left(A_Fh^2\right)\phi,
    }
    and
    \eq{
    q_F\phi
    =\frac{\left<\nu_F,\mu\right>}{F(\nu)}h_F(\mu,\mu)\phi.
    }
\end{enumerate}
The fact that $\S$ has $H_F\equiv0$ leads to the following well-known facts (c.f., \cite[Theorem 3.1]{Winklmann05-ArchMath}):
\begin{lemma}
Let $u$ be a $C^2$-solution to \eqref{defn:AMSE} on $\mbR^n_+$ (namely, $\S$ has $H_F\equiv0$), then the 
unit normal $\nu$ along $\S$ satisfies
\eq{\label{eq:Jacobi-eqn-minimal-graph}
\De_F\nu
=-{\rm tr}_g\left(A_Fh^2\right)\nu.
}
And the function $F(\nu(x))$ satisfies
\eq{\label{eq:De_F-msF}
\De_F\left(F(\nu)\right)
={\rm tr}_g(h^2_F)-{\rm tr}_g(A_Fh^2)F(\nu).
}
\end{lemma}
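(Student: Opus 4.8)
The plan is to prove both identities by a direct tensorial computation at a fixed point $p\in\S$, working in a local orthonormal frame $\{e_1,\dots,e_n\}$ of $T\S$ that is geodesic at $p$; this is the usual Simons-type argument adapted to the anisotropic weight (cf.\ \cite[Theorem 3.1]{Winklmann05-ArchMath}). The standing ingredients are the Weingarten relation $\bar D_{e_i}\nu=\sum_j h_{ij}e_j$, the Gauss formula $\bar D_{e_i}e_j=\na_{e_i}e_j-h_{ij}\nu$, the Codazzi equation in flat ambient space (full symmetry of $\na h$: $\na_ih_{jk}=\na_jh_{ik}=\na_kh_{ij}$), and the facts that $A_F=\bar D^2F(\nu)$ is symmetric, annihilates $\nu$ (hence maps $T\S$ into $T\S$), and that $\bar D^3F(\nu)$ is a fully symmetric $3$-tensor. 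Since $\De_F$ is linear, \eqref{eq:Jacobi-eqn-minimal-graph} may be verified component by component, so in both cases the computation is with scalar functions.

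The key preliminary step is the formula for the covariant derivative of $A_F$ along $\S$: at $p$, $\na_{e_l}A_F(e_i,e_j)=\bar D^3F(\nu)(e_i,e_j,\bar D_{e_l}\nu)=\sum_m h_{lm}\,\bar D^3F(\nu)(e_i,e_j,e_m)$. The one subtlety here is that differentiating $A_F(e_i,e_j)=\bar D^2F(\nu)(e_i,e_j)$ a priori also produces two terms carrying the normal components of $\bar D_{e_l}e_i$ and $\bar D_{e_l}e_j$, i.e.\ terms of the form $\bar D^2F(\nu)(\nu,\cdot)$, which vanish by $1$-homogeneity of $F$. Combining this with Codazzi, the next step is the identity ${\rm div}_\S(A_F\circ h)=\na H_F$ (as a $1$-form on $\S$): expanding the divergence, the two terms that are cubic in $h$ — one from $\na A_F$ contracted with $h$, one from rewriting the contraction of $A_F$ with $\na h$ via Codazzi — cancel by the full symmetry of $\bar D^3F(\nu)$, and what remains is $\na H_F$, which vanishes since $\S$ is anisotropic minimal.

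With these in hand, \eqref{eq:Jacobi-eqn-minimal-graph} follows by expanding $\De_F\nu={\rm div}_\S(A_F\na\nu)$ componentwise: the contraction picking up the normal part of the Gauss formula yields the normal contribution $-\,{\rm tr}_g(A_Fh^2)\,\nu$, while the tangential part is exactly the $1$-form $\na H_F$ of the previous step, hence zero. For \eqref{eq:De_F-msF} I would put $w:=F(\nu)$ and observe, from $\na_iw=\langle\nu_F,\bar D_{e_i}\nu\rangle=\sum_j h_{ij}\langle\nu_F,e_j\rangle$ together with the splitting $\nu_F=\bar DF(\nu)=\nu_F^\top+w\,\nu$ provided by \eqref{eq:<DF(z),z>=F(z)} ($\nu_F^\top$ the tangential part of $\nu_F$), that $\na w$ is the Weingarten image of $\nu_F^\top$, so $\na_F w=A_F\na w$. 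Taking ${\rm div}_\S$ and using the Leibniz rule splits $\De_F w$ into two terms: in the first, the derivative falls on $A_F$ and the Weingarten map and produces $({\rm div}_\S(A_F\circ h))(\nu_F^\top)=0$ by the previous step; in the second the derivative falls on $\nu_F^\top$, and from $\nu_F^\top=\nu_F-w\,\nu$ together with the Weingarten and Gauss formulas one gets $\na_{e_i}\nu_F^\top=(A_F-w\,{\rm Id})\bar D_{e_i}\nu$. Contracting this against $A_F\circ h$ and identifying the resulting traces via $h_F$ (as in \eqref{defn:h_F}) gives precisely ${\rm tr}_g(h_F^2)-w\,{\rm tr}_g(A_Fh^2)$.

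The main difficulty I anticipate is bookkeeping rather than conceptual: keeping index placement consistent — the operator underlying $A_F\circ h$ and the bilinear form $h_F$ encode the same data but are not individually symmetric, so one must track which trace appears — and checking that the two cubic-in-$h$ contractions genuinely cancel, which is the single place where the full third-order symmetry of $F$ is used in an essential way. Everything else is routine once the geodesic frame and the Weingarten, Gauss, and Codazzi formulas are set up.
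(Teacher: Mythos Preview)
The paper does not actually prove this lemma: it simply records the two identities as ``well-known facts'' with a reference to \cite[Theorem~3.1]{Winklmann05-ArchMath}, and then proceeds directly to the stability inequality. So there is no proof in the paper to compare against.

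Your proposed argument is correct and is precisely the standard Simons-type computation that the cited reference carries out. The only place worth a second look is the cancellation you flag: writing $T_{abc}=\bar D^3F(\nu)(e_a,e_b,e_c)$ and expanding, the difference ${\rm div}_\S(A_F h)_j-\nabla_jH_F$ reduces to $\sum_{abc}(h_{ab}h_{cj}-h_{ja}h_{bc})T_{abc}$, which vanishes after symmetrizing over $(a,b,c)$ thanks to the full symmetry of $T$ \emph{and} the symmetry of $h$; both are needed, so your description of where the cancellation comes from is accurate. The second identity then follows exactly as you outline, with $\nabla_{e_k}\nu_F^\top=(A_F-w\,{\rm Id})\bar D_{e_k}\nu$ yielding the two traces ${\rm tr}_g(h_F^2)$ and $-w\,{\rm tr}_g(A_Fh^2)$ upon contraction (the first via cyclicity of the trace, since $h_F=hA_F$ and $A_Fh$ have the same trace of squares). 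Your caveat about $h_F$ not being symmetric is well placed but causes no trouble here.
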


Replacing $\phi$ by $F(\nu)\phi$ in the stability inequality \eqref{ineq:stability-GX25-1} yields the following (c.f., \cite[eqn. (5.4)]{GX25}):

\begin{lemma}
Let $u$ be a $C^2$-solution to \eqref{defn:AMSE} on $\mbR^n_+$ satisfying the free boundary condition in the anisotropic sense \eqref{defn:anisotropic-free-bdry-Intro}.
There exists a positive constant $C(F)$ depending only on $F$, such that
for any $\phi\in C^2_c(\S)$,
\eq{\label{ineq:stability-ineq}
    \int_\S\phi^2\abs{h}^2\rd\mcH^n
    \leq C(F)\int_\S\abs{\na\phi}^2\rd\mcH^n.
    }
\end{lemma}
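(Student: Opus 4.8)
The plan is to insert the test function $\psi\coloneqq F(\nu)\phi$ into the stability inequality \eqref{ineq:stability-GX25-1}, use the Jacobi-type identity \eqref{eq:De_F-msF} for $F(\nu)$, and check that all boundary contributions cancel; the estimate then follows from uniform ellipticity. I first note that, by Proposition \ref{fF}, equation \eqref{defn:AMSE} is uniformly elliptic along a $C^2$-solution, so $u$ --- hence $\nu$ and the function $F(\nu)$ on $\S$ --- are smooth, which makes $\psi\in C^2_c(\S)$ an admissible competitor. Expanding the $F$-Jacobi operator on $\psi$ by the product rule for $\De_F$ and substituting \eqref{eq:De_F-msF}, the term ${\rm tr}_g(A_Fh^2)$ drops out and one gets
\[
J_F\psi = F(\nu)\De_F\phi + \phi\,{\rm tr}_g(h^2_F) + 2\,g\!\left(\na F(\nu),\na_F\phi\right).
\]
Multiplying by $-\psi=-F(\nu)\phi$, integrating over $\S$, and integrating by parts once in the term containing $F^2(\nu)\phi\De_F\phi$, the cross terms $2F(\nu)\phi\, g(\na F(\nu),\na_F\phi)$ cancel, leaving
\[
-\int_\S\psi J_F\psi\,\rd\mcH^n=\int_\S F^2(\nu)\,g(\na\phi,\na_F\phi)\,\rd\mcH^n-\int_\S F(\nu)\phi^2\,{\rm tr}_g(h^2_F)\,\rd\mcH^n-\int_{\p\S}F^2(\nu)\phi\,g(\na_F\phi,\mu)\,\rd\mcH^{n-1}.
\]

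Next I would handle the boundary integrand of \eqref{ineq:stability-GX25-1}. Writing out $g(\na_F\psi,\mu)$ and $q_F\psi$ with $\psi=F(\nu)\phi$, that integrand equals $F^2(\nu)\phi\,g(\na_F\phi,\mu)+F(\nu)\phi^2\big(g(\na_F F(\nu),\mu)-\<\nu_F,\mu\>h_F(\mu,\mu)\big)$, whose first summand cancels the boundary term produced above. It then remains to show that the bracket $g(\na_F F(\nu),\mu)-\<\nu_F,\mu\>h_F(\mu,\mu)$ vanishes along $\p\S$. Since $\na F(\nu)$ is the shape operator applied to the tangential projection $\nu_F^T\coloneqq\nu_F-F(\nu)\nu$, one has $g(\na_F F(\nu),\mu)=h_F(\nu_F^T,\mu)$; decomposing $\nu_F^T=\tau_0+\<\nu_F,\mu\>\mu$ with $\tau_0\in T(\p\S)$ (using $\<\nu_F^T,\mu\>=\<\nu_F,\mu\>$) gives $h_F(\nu_F^T,\mu)=h_F(\tau_0,\mu)+\<\nu_F,\mu\>h_F(\mu,\mu)$, so the bracket reduces to $h_F(\tau_0,\mu)=\<\bar D_{\tau_0}\nu_F,\mu\>$, which is exactly the quantity shown to vanish in the proof of Lemma \ref{Lem:principal-direction-mu}, since $\tau_0$ is tangent to $\p\S$. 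Hence the full boundary contribution is zero, and \eqref{ineq:stability-GX25-1} collapses to the clean interior inequality
\[
\int_\S F(\nu)\phi^2\,{\rm tr}_g(h^2_F)\,\rd\mcH^n\leq\int_\S F^2(\nu)\,g(\na\phi,\na_F\phi)\,\rd\mcH^n.
\]

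Finally I would invoke uniform ellipticity: $m_F\le F(\nu)\le M_F$ on $\S$ (from the one-homogeneity of $F$), $g(\na\phi,\na_F\phi)=\<\na\phi,\bar D^2F(\nu)\na\phi\>\le\Lambda_F\abs{\na\phi}^2$ by \eqref{defn:lambda-Lambda_F}, and the elementary bound ${\rm tr}_g(h^2_F)\ge\lambda_F^2\abs{h}^2$ --- valid because $\bar D^2F(\nu)\ge\lambda_F\,{\rm id}$ on $T\S$, even though it need not commute with $h$ (one writes ${\rm tr}_g(h^2_F)=\lVert\bar D^2F(\nu)^{1/2}\,h\,\bar D^2F(\nu)^{1/2}\rVert^2$ and uses $\lVert M N M\rVert\ge \sigma_{\min}(M)^2\lVert N\rVert$). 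Combining, $m_F\lambda_F^2\int_\S\phi^2\abs{h}^2\le M_F^2\Lambda_F\int_\S\abs{\na\phi}^2$, i.e.\ the assertion with $C(F)=M_F^2\Lambda_F/(m_F\lambda_F^2)$. The main obstacle is the boundary bookkeeping in the second paragraph: the three boundary integrals --- from integrating $\De_F\phi$ by parts, from $g(\na_F\psi,\mu)$, and from the curvature term $q_F\psi$ --- must be seen to reorganize into the single term $h_F(\tau_0,\mu)$, which is precisely where the anisotropic free boundary condition enters through Lemma \ref{Lem:principal-direction-mu}; secondary technical points are the admissibility of $\psi=F(\nu)\phi$ (the a priori smoothness of $u$) and the linear-algebra estimate ${\rm tr}_g(h^2_F)\ge\lambda_F^2\abs{h}^2$.
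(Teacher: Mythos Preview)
Your proposal is correct and follows essentially the same route as the paper: substitute $\psi=F(\nu)\phi$ into the stability inequality \eqref{ineq:stability-GX25-1}, use \eqref{eq:De_F-msF} to eliminate ${\rm tr}_g(A_Fh^2)$, verify that the three boundary contributions combine into $h_F(\tau_0,\mu)$ which vanishes by Lemma \ref{Lem:principal-direction-mu}, and finish by uniform ellipticity. The only cosmetic differences are that the paper integrates by parts via $\De_F(\phi^2)$ rather than directly on $F^2(\nu)\phi\De_F\phi$, and that you supply the explicit constant $\lambda_F^2$ in the bound ${\rm tr}_g(h_F^2)\geq\lambda_F^2\abs{h}^2$ (with a clean Frobenius-norm justification) where the paper just writes $C(F)$.
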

\begin{proof}
By \eqref{eq:De_F-msF} one finds
\eq{
J_F(F(\nu))
={\rm tr}_g(h^2_F).
}
By Lemma \ref{Lem:principal-direction-mu} one computes
    \eq{\label{boundary1}
    \left<\na_F\left(F(\nu)\right),\mu\right>
    =h_F\left(\nu_F,\mu\right)
    =q_F\left(F(\nu)\right)\quad\text{on }\p\S.
    }
Using these facts, and
replace $\phi$ by $F(\nu)\phi$ in the stability inequality \eqref{ineq:stability-GX25-1}, we find: the term involving $J_F$ is
\eq{
&-\int_\S F(\nu)\phi J_F(F(\nu)\phi)\rd\mcH^n\\
=&-\int_\S F(\nu)\phi\left[\phi J_F(F(\nu))+2g\left(\na\phi,\na_F\left(F(\nu)\right)\right)+F(\nu)\De_F\phi\right]\rd\mcH^n\\
=&-\int_\S F(\nu)\phi^2{\rm tr}_g(h^2_F)+\frac12g\left(\na(F^2(\nu)),\na_F\phi^2\right)+F^2(\nu)\phi\De_F\phi\rd\mcH^n\\
=&-\int_\S F(\nu)\phi^2{\rm tr}_g(h^2_F)+F^2(\nu)\phi\De_F\phi-\frac12F^2(\nu)\De_F(\phi^2)\rd\mcH^n-\frac12\int_{\p\S}F^2(\nu)g\left(\na_F\phi^2,\mu\right)\rd\mcH^{n-1}\\
=&-\int_\S F(\nu)\phi^2{\rm tr}_g(h^2_F)-F^2(\nu)g\left(\na\phi,\na_F\phi\right)\rd\mcH^n-\int_{\p\S}F^2(\nu)\phi g\left(\na_F\phi,\mu\right)\rd\mcH^{n-1};
}
while  by \eqref{boundary1} the term involving $q_F$ is
\eq{
&\int_{\p\S}F(\nu)\phi\bigg(g\left(\na_F(F(\nu)\phi),\mu\right)-q_FF(\nu)\phi\bigg)\rd\mcH^{n-1}\\
=&\int_{\p\S}F(\nu)\phi\bigg(\phi g\big(\na_F(F(\nu)),\mu\big)-q_FF(\nu)\phi+F(\nu)g\left(\na_F\phi,\mu\right)\bigg)\rd\mcH^{n-1}\\
=&\int_{\p\S}F^2(\nu)\phi g\left(\na_F\phi,\mu\right)\rd\mcH^{n-1}.
}
Combining, the boundary terms cancel out and we get
\eq{
m_F\int_\S\phi^2{\rm tr}_g(h^2_F)\rd\mcH^n
\leq\int_\S F(\nu)\phi^2{\rm tr}_g(h^2_F)\rd\mcH^n
\leq\int_\S F^2(\nu)g\left(\na\phi,\na_F\phi\right)\rd\mcH^n
\leq M^2_F\int_\S g\left(\na\phi,\na_F\phi\right)\rd\mcH^n.
}
Hence, by \eqref{defn:lambda-Lambda_F},
\eq{
\int_\S\phi^2\abs{h}^2\rd\mcH^n
    {\leq} C(F)\int_\S\phi^2{\rm tr}_g\left(h_F^2\right)\rd\mcH^n
    \leq C(F)\int_\S\abs{\na\phi}^2\rd\mcH^n.
}
\end{proof}

We consider two kinds of area elements, graphical area element $W$ and anisotropic graphical area element $W_f$, which are comparable:
\begin{lemma}
At each $x\in\overline{\mbR^n_+}$, one has
\eq{\label{ineq:comparable}
M_F^{-1}W_f\leq W\leq m_F^{-1}W_f.
}
\end{lemma}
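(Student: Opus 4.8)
The plan is to reduce the claimed two-sided bound to the elementary inequality $m_F\le F(\nu)\le M_F$ on the unit sphere, using only the $1$-homogeneity of $F$. First I would recall the defining expressions: at a point $x\in\overline{\mbR^n_+}$ we have $W(x)=\sqrt{1+\abs{Du(x)}^2}$, $W_f(x)=f(Du(x))=F(-Du(x),1)$, and the upward unit normal is $\nu(x)=(-Du(x),1)/W(x)$, so in particular $\left<\nu(x),e_{n+1}\right>=W(x)^{-1}$.

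The single computational step is to factor out the length of the vector $(-Du(x),1)$. Since $(-Du(x),1)=W(x)\,\nu(x)$ and $F$ is positively $1$-homogeneous,
\eq{
W_f(x)=F\bigl(-Du(x),1\bigr)=F\bigl(W(x)\,\nu(x)\bigr)=W(x)\,F(\nu(x)),
}
which is precisely the geometric identity \eqref{eq:W-W_f-geometric} written out in graphical coordinates. Now $\nu(x)\in\mbS^n$, so by the very definitions of $m_F$ and $M_F$ we have $m_F\le F(\nu(x))\le M_F$, and hence $m_F\,W(x)\le W_f(x)\le M_F\,W(x)$. Dividing through (note $m_F>0$ by uniform ellipticity) gives $M_F^{-1}W_f(x)\le W(x)\le m_F^{-1}W_f(x)$, which is \eqref{ineq:comparable}.

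There is essentially no obstacle here: the statement is a direct bookkeeping consequence of homogeneity together with the definitions of $m_F,M_F$; the only point worth noting is that $m_F>0$ so that the final division is legitimate, and that the estimate holds pointwise on all of $\overline{\mbR^n_+}$, including along $\p\mbR^n_+$, since $Du$ is assumed $C^2$ up to the boundary.
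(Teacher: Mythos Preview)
Your proposal is correct and follows exactly the same approach as the paper: both derive the identity $W_f(x)=F(\nu(x))W(x)$ from the $1$-homogeneity of $F$ and then use $m_F\le F(\nu)\le M_F$ to conclude. The only difference is that you spell out the intermediate observations a bit more explicitly.
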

\begin{proof}
By the definition of $f$, the one-homogeneity of $F$, we find
\eq{\label{eq:W_f(x)}
W_f(x)
=f(Du(x))
=F(-Du(x),1)
=F(\nu(x))W(x),
}
the assertion then follows.
\end{proof}
We need the following two important ingredients, the first is due to
$H_F\equiv0$:
\begin{lemma}\label{Lem:diff-ineq-logW_f}
Let $u$ be a $C^2$-solution to \eqref{defn:AMSE} on $\mbR^n_+$, then on $\S$, there holds:
\eq{
\De_F\log W_f+2g\left(\na\log F(\nu),\na_F\log W_f\right)
=&\frac{{\rm tr}_g(h^2_F)}{F(\nu)}
+g\left(\na\log W_f,\na_F\log W_f\right) \\
\geq & g\left(\na\log W_f,\na_F\log W_f\right)
\geq 0.
}
\end{lemma}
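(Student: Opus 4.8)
The plan is to work entirely on the graph $\Sigma$, viewed as a hypersurface with $H_F\equiv 0$, and to derive the differential identity pointwise via a Bochner-type computation for the function $W_f = F(\nu)W$. The starting point is the two identities already recorded in the excerpt: the Jacobi-type equation $\De_F\nu = -{\rm tr}_g(A_Fh^2)\nu$ from \eqref{eq:Jacobi-eqn-minimal-graph}, and $\De_F(F(\nu)) = {\rm tr}_g(h_F^2) - {\rm tr}_g(A_Fh^2)F(\nu)$ from \eqref{eq:De_F-msF}. The key extra input is that $W = \langle\nu,e_{n+1}\rangle^{-1}$, so $W$ is (up to the harmless positive factor) the reciprocal of a single component of $\nu$; applying $\De_F$ to this reciprocal and using \eqref{eq:Jacobi-eqn-minimal-graph} should produce a clean formula for $\De_F\log W$ in terms of ${\rm tr}_g(A_Fh^2)$ plus a gradient-squared term coming from the chain rule $\De_F(\phi^{-1}) = -\phi^{-2}\De_F\phi + 2\phi^{-3}g(\na\phi,\na_F\phi)$.

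First I would establish the scalar identity $\De_F\log W = {\rm tr}_g(A_Fh^2) + g(\na\log W,\na_F\log W)$. To see this: write $v = \langle\nu,e_{n+1}\rangle = W^{-1}$, so $\log W = -\log v$. Since $e_{n+1}$ is a fixed ambient vector, $\na_F v = \bar D^2F(\nu)\na v$ has $g$-gradient determined by $\bar D_{\tau_i}\nu$, and $\De_F v = \langle \De_F\nu, e_{n+1}\rangle = -{\rm tr}_g(A_Fh^2)\,v$ by \eqref{eq:Jacobi-eqn-minimal-graph}. Then
\eq{
\De_F\log W = -\De_F\log v = -\frac{\De_F v}{v} + \frac{g(\na v,\na_F v)}{v^2} = {\rm tr}_g(A_Fh^2) + g(\na\log W,\na_F\log W).
}
Next, combining with $W_f = F(\nu)W$, hence $\log W_f = \log F(\nu) + \log W$, I would use \eqref{eq:De_F-msF} rewritten as $\De_F\log F(\nu) = \frac{{\rm tr}_g(h_F^2)}{F(\nu)} - {\rm tr}_g(A_Fh^2) - g(\na\log F(\nu),\na_F\log F(\nu))$ (again via the chain rule $\De_F\log\phi = \De_F\phi/\phi - g(\na\log\phi,\na_F\log\phi)$). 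Adding the two and collecting the quadratic gradient terms $g(\na\log W,\na_F\log W)$, $-g(\na\log F,\na_F\log F)$, and the cross-terms arising from $\na\log W_f = \na\log F + \na\log W$, the ${\rm tr}_g(A_Fh^2)$ contributions cancel and, after moving $2g(\na\log F(\nu),\na_F\log W_f)$ to the left, one is left precisely with $\frac{{\rm tr}_g(h_F^2)}{F(\nu)} + g(\na\log W_f,\na_F\log W_f)$. Both terms on the right are non-negative: ${\rm tr}_g(h_F^2)\ge 0$ since $h_F^2$ is a square of a symmetric operator (conjugate to $A_F^{1/2}hA_F^{1/2}$ via $g_F$), and $g(\na\phi,\na_F\phi) = \langle\na\phi,\bar D^2F(\nu)\na\phi\rangle \ge \lambda_F|\na\phi|^2\ge 0$ by \eqref{defn:lambda-Lambda_F}.

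The main obstacle I anticipate is bookkeeping the cross-terms correctly when expanding the quadratic forms $g(\na(\,\cdot\,),\na_F(\,\cdot\,))$, which is bilinear but whose arguments one must split consistently as $\log F(\nu)$ plus $\log W$; one must verify that exactly the combination $2g(\na\log F(\nu),\na_F\log W_f)$ (and no leftover $g(\na\log F,\na_F\log F)$ term) survives on the left-hand side, which hinges on the algebraic identity $g(\na a,\na_F a) + 2g(\na a,\na_F b) + g(\na(a+b),\na_F(a+b)) - g(\na a,\na_F a) = g(\na(a+b),\na_F(a+b)) + 2g(\na a,\na_F b)$ with $a=\log F(\nu)$, $b=\log W$ — a routine but error-prone rearrangement. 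A secondary point to be careful about is justifying $\De_F(\phi^{-1})$ and $\De_F(\log\phi)$ chain rules with $\na_F = A_F\na$, i.e. that $\De_F(\psi\circ\phi) = \psi'(\phi)\De_F\phi + \psi''(\phi)g(\na\phi,\na_F\phi)$, which follows from ${\rm div}_\Sigma(A_F\na(\psi\circ\phi)) = {\rm div}_\Sigma(\psi'(\phi)A_F\na\phi)$ and the product rule, using $g(\na\phi,A_F\na\phi) = g(\na\phi,\na_F\phi)$. Everything else is direct substitution.
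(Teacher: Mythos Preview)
Your proposal is correct and follows essentially the same approach as the paper: both arguments rest on the two Jacobi-type identities \eqref{eq:Jacobi-eqn-minimal-graph} and \eqref{eq:De_F-msF} together with the chain rule $\De_F(\psi\circ\phi)=\psi'(\phi)\De_F\phi+\psi''(\phi)\,g(\na\phi,\na_F\phi)$. The only cosmetic difference is that you decompose additively via $\log W_f=\log F(\nu)+\log W$ and compute each $\De_F$-term separately, whereas the paper writes $W_f^{-1}=v_0/F(\nu)$ with $v_0=\langle\nu,e_{n+1}\rangle$ and expands $\De_F(W_f^{-1})$ as a quotient before applying $\De_F\log h=-h\,\De_F(h^{-1})+g(\na\log h,\na_F\log h)$; the algebraic content is identical and the ${\rm tr}_g(A_Fh^2)$ cancellation occurs in the same way.
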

\begin{proof}
By elementary computations
\eq{\label{eq:De-log-v}
\De_F\log h
=-h\De_F(h^{-1})
+g\left(\na\log h,\na_F\log h\right),
\quad\forall h\in C^2(\S).
}

Now for ease of computation we put $v_0(x)\coloneqq\left<\nu(x),e_{n+1}\right>$.
By \eqref{eq:W_f(x)}, we can write $W_f=\frac{F(\nu)}{v_0}$, and hence
\eq{\label{eq:De-v^-1}
\De_F(W_f^{-1})
=\De_F\left(\frac{v_0}{F(\nu)}\right)
=&{\rm div}_\S\left(\frac{\na_F v_0}{F(\nu)}-\frac{v_0\na_F\left(F(\nu)\right)}{F^2(\nu)}\right)\\
=&\frac{\De_F v_0}{F(\nu)}-\frac{2g\left(\na v_0,\na_F\left(F(\nu)\right)\right)}{F^2(\nu)}+\frac{2v_0}{F^3(\nu)}
g\left(\na\left(F(\nu)\right),\na_F\left(F(\nu)\right)\right)
-\frac{v_0\De_F\left(F(\nu)\right)}{F^2(\nu)}.
}
Recalling \eqref{eq:Jacobi-eqn-minimal-graph}, \eqref{eq:De_F-msF}, we readily obtain
\eq{\label{eq:De-vartheta-v_0}
\De_F\left(F(\nu)\right)
=-{\rm tr}_g\left(A_Fh^2\right)F(\nu)+{\rm tr}_g(h^2_F),\quad
\De_Fv_0
=-{\rm tr}_g\left(A_Fh^2\right)v_0.
}
Using \eqref{eq:De-log-v}, \eqref{eq:De-v^-1}, taking also the fact that $W_f=\frac{F(\nu)}{v_0}$ and \eqref{eq:De-vartheta-v_0} into account, we get
\eq{
\De_F\log W_f
=&-\frac{\De_F v_0}{v_0}+\frac{2g\left(\na v_0,\na_F\left(F(\nu)\right)\right)}{v_0F(\nu)}
-\frac2{F^2(\nu)}
g\left(\na\left(F(\nu)\right),\na_F\left(F(\nu)\right)\right)\\
&+\frac{\De_F\left(F(\nu)\right)}{F(\nu)}+
g\left(\na\log W_f,\na_F\log W_f\right)\\
=&{\rm tr}_g\left(A_Fh^2\right)(1-1)+\frac{{\rm tr}_g(h^2_F)}{F(\nu)}+\frac{2g\left(\na v_0,\na_F\left(F(\nu)\right)\right)}{v_0F(\nu)}\\
&-\frac2{F^2(\nu)}
g\left(\na\left(F(\nu)\right),\na_F\left(F(\nu)\right)\right)+
g\left(\na\log W_f,\na_F\log W_f\right)\\
=&\frac{{\rm tr}_g(h^2_F)}{F(\nu)}
-2g\left(\na\log F(\nu),\na_F\log W_f\right)+
g\left(\na\log W_f,\na_F\log W_f\right),
}
where we have used in the third equality the following trivial fact resulting from $W_f=\frac{F(\nu)}{v_0}$:
\eq{
\frac{\na v_0}{v_0}
=\frac{W_f}{F(\nu)}\na \left(\frac{F(\nu)}{W_f}\right)
=\frac{W_f}{F(\nu)}\left(\frac{\na\left(F(\nu)\right)}{W_f}-\frac{F(\nu)\na W_f}{W_f^2}\right)
=\na\log F(\nu)-\na\log W_f.
}
The assertion then follows after rearranging.
\end{proof}
Hence on $\S$, if we consider the quantity (which is a weighted $F$-Laplacian)
\eq{
{\rm div}_\S\left(F^2(\nu)\na_F\phi\right)
=F^2(\nu)\left(\De_F\phi+2g\left(\na\log F(\nu),\na_F\phi\right)\right),
\quad\forall\phi\in C^2(\S),
}
then in view of Lemma \ref{Lem:diff-ineq-logW_f}, we have the point-wise differential inequality on $\S$:
\eq{\label{eq:laplace-log-v}
{\rm div}_\S(F^2(\nu)\na_F\log W_f)
\geq F^2(\nu)
g\left(\na\log W_f,\na_F\log W_f\right).
}
Clearly, for the (weight) function $F^2(\nu)$ defined on $\overline{\mbR^n_+}$,
we have:
\eq{\label{eq:psi-range}
F^2(\nu)\in\left[m_F^2,M_F^2\right].
}

The second one is due to the free boundary condition in the anisotropic sense:
\begin{lemma}\label{Lem:bdry-tangential-property}
Let $u$ be a $C^2$-solution on $\mbR^n_+$ such that its graph $\S$ has free boundary in the anisotropic sense in $\mbR^{n+1}_+$.
Then the anisotropic graphical area element $W_f$ satisfies
\eq{\label{eq:na-v-mu=0}
g\left(\na_FW_f,\mu\right)
\equiv0\text{ on }\p\S.
}
\end{lemma}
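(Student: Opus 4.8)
The plan is to avoid computing directly with $u$ and instead exploit the geometric formula $W_f=F(\nu)\left<\nu,e_{n+1}\right>^{-1}$ recorded in \eqref{eq:W-W_f-geometric}. Writing $v_0\coloneqq\left<\nu,e_{n+1}\right>$, so that $W_f=F(\nu)/v_0$, the key device is the auxiliary $\mbR^{n+1}$-valued field along $\S$
\eq{
Z\coloneqq\nu_F-W_fe_{n+1}.
}
First I would observe that $Z$ is everywhere tangent to $\S$: by \eqref{eq:<DF(z),z>=F(z)} one has $\left<\nu_F,\nu\right>=F(\nu)$, hence $\left<Z,\nu\right>=F(\nu)-W_fv_0=0$.

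Next I would express $\na_FW_f$ through $Z$. For $X\in T\S$, using $\bar D_X\nu\perp\nu$ together with the chain-rule identities $X(F(\nu))=\left<\nu_F,\bar D_X\nu\right>$ and $X(v_0)=\left<\bar D_X\nu,e_{n+1}\right>$, a short computation gives $X(W_f)=v_0^{-1}\left<\bar D_X\nu,Z\right>=v_0^{-1}h(X,Z)$; by symmetry of $h$ this yields $\na W_f=v_0^{-1}\bar D_Z\nu$, and applying $\bar D^2F(\nu)$ and the chain rule once more,
\eq{
\na_FW_f=\bar D^2F(\nu)\na W_f=\tfrac1{v_0}\,\bar D^2F(\nu)\bigl[\bar D_Z\nu\bigr]=\tfrac1{v_0}\,\bar D_Z\nu_F.
}
Since all the vectors involved lie in $T\S=\nu^\perp$, it follows that along $\p\S$
\eq{
g\left(\na_FW_f,\mu\right)=\tfrac1{v_0}\left<\bar D_Z\nu_F,\mu\right>.
}

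It remains to see that the right-hand side vanishes on $\p\S$, and this is the only place where the free boundary condition is used. By Lemma \ref{Lem:mu_F-perpendicular} the condition \eqref{defn:anisotropic-free-bdry-Intro} is equivalent to $\mu_F\parallel e_1$, so $\left<\mu_F,e_{n+1}\right>=0$; solving $\mu=F(\nu)^{-1}\left(\mu_F+\left<\nu_F,\mu\right>\nu\right)$ from the definition of $\mu_F$ and pairing with $e_{n+1}$ gives $\left<e_{n+1},\mu\right>=v_0F(\nu)^{-1}\left<\nu_F,\mu\right>$, whence
\eq{
\left<Z,\mu\right>=\left<\nu_F,\mu\right>-W_f\left<e_{n+1},\mu\right>=\left<\nu_F,\mu\right>-\tfrac{F(\nu)}{v_0}\cdot\tfrac{v_0}{F(\nu)}\left<\nu_F,\mu\right>=0.
}
Together with $\left<Z,\nu\right>=0$ this shows $Z\in T(\p\S)$ along $\p\S$. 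Finally, the computation in the proof of Lemma \ref{Lem:principal-direction-mu} shows $\left<\bar D_\tau\nu_F,\mu\right>=0$ for every $\tau\in T(\p\S)$ — i.e. $\mu$ is an anisotropic principal direction, using transversality of $\S$ with $\p\mbR^{n+1}_+$ — and since $\bar D_Z\nu_F$ depends only on the pointwise value of $Z$, applying this with $\tau=Z$ yields $\left<\bar D_Z\nu_F,\mu\right>=0$, which is \eqref{eq:na-v-mu=0}.

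I do not anticipate a serious obstacle: the single scalar identity carrying the free boundary information is $\left<Z,\mu\right>=0$ on $\p\S$, and once it is established the assertion reduces to $\mu$ being an anisotropic principal direction of $\p\S$. The only steps requiring genuine care are the chain-rule identification $\na_FW_f=v_0^{-1}\bar D_Z\nu_F$ and the bookkeeping that $Z$, $\bar D_Z\nu$ and $\na_FW_f$ all remain tangent to $\S$.
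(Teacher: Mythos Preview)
Your proof is correct and takes essentially the same route as the paper: both differentiate the geometric formula $W_f=F(\nu)\langle\nu,e_{n+1}\rangle^{-1}$, reduce via Lemma~\ref{Lem:principal-direction-mu} (that $\mu$ is an anisotropic principal direction), and close with $\langle\mu_F,e_{n+1}\rangle=0$ from Lemma~\ref{Lem:mu_F-perpendicular}. Your device of packaging the two derivative terms into the single tangent vector $Z=\nu_F-W_fe_{n+1}$ and then showing $Z\in T(\partial\Sigma)$ is a clean reformulation of the paper's computation---indeed the paper's final scalar factor $\langle\mu_F,e_{n+1}\rangle$ equals $-v_0\langle Z,\mu\rangle$, so the two vanishing conditions are the same identity.
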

\begin{proof}
We first use \eqref{eq:<DF(z),z>=F(z)} to rewrite \eqref{eq:W_f(x)} as
\eq{\label{eq:W_f}
W_f(x)
=\left<\nu_F(x),\nu(x)\right>\left<\nu,e_{n+1}\right>^{-1}.
}
In view of Lemma \ref{Lem:principal-direction-mu}
and  Lemma \ref{Lem:mu_F-perpendicular},
we have
\eq{
g(\na_FW_f,\mu)
=&\left<A_F[\na(W_f)],\mu\right>
=\left<A_F\left[\na\left(\left<\nu_F(x),\nu(x)\right>\left<\nu,e_{n+1}\right>^{-1}\right)\right],\mu\right>\\
=&0+h_F\left(\nu_F,\mu\right)\left<\nu,e_{n+1}\right>^{-1}-\left<\nu_F,\nu\right>\left<\nu,e_{n+1}\right>^{-2}h_F(\mu,e_{n+1})\\
=&\left<\nu,e_{n+1}\right>^{-2}h_F\left(\mu,\mu\right)\left(\left<\nu,e_{n+1}\right>\left<\nu_F,\mu\right>-\left<\nu_F,\nu\right>\left<\mu,e_{n+1}\right>\right)\\
=&\left<\nu,e_{n+1}\right>^{-2}h_F(\mu,\mu)\left<
{\left<\nu_F,\mu\right>\nu-\left<\nu_F,\nu\right>\mu},e_{n+1}\right>
\\
=&-\left<\nu,e_{n+1}\right>^{-2}h_F(\mu,\mu) \langle{\mu_F},e_{n+1}\rangle=0.
}
\end{proof}

Finally, note that
any $C^1$-function $\phi$ on $\mbR^n$ can be viewed as a function on $\S$ by putting $\phi(x,u(x))=\phi(x)$.
Hence
\eq{
\abs{\na\phi}^2
=\abs{D\phi}^2-W^{-2}\abs{\left<Du,D\phi\right>}^2
\geq\abs{D\phi}^2-W^{-2}\abs{Du}^2\abs{D\phi}^2
=W^{-2}\abs{D\phi}^2.
}
Recalling \eqref{defn:lambda-Lambda_F}, we find
\eq{\label{ineq:bar-nabla-D-relation}
\Lambda_F\abs{D\phi}^2
\geq \Lambda_F\abs{\na\phi}^2
\geq
g(\na\phi,\na_F\phi)
\geq \lambda_F\abs{\na\phi}^2
\geq \lambda_FW^{-2}\abs{D\phi}^2.
}
Similarly, any $C^1$-function $\psi$ on $\mbR^{n+1}$ when restricted to $\S$ satisfies
\eq{\label{ineq:bar-nabla-D-relation-n+1}
\Lambda_F\abs{\bar D\psi}^2
\geq \Lambda_F\abs{\na\psi}^2
\geq
g(\na\psi,\na_F\psi)
\geq \lambda_F\abs{\na\psi}^2.
}

\section{Integral estimates}\label{Sec:3}
The goal of this section is to obtain the integral estimates (Lemmas \ref{Lem:integral-estimate-2}, \ref{Lem:integral-estimate-3}), inspired by Trudinger \cite{Trudinger72}.

\begin{remark}\label{Rem:tilde-F}
\normalfont
    For general $F$, one may have $W_f(x)<1$ on $\overline{\mbR^n_+}$, so $\log W_f(x)<0$. To avoid this, set
    \eq{
    \tilde F(z)
    \coloneqq m^{-1}_FF(z),\quad\forall z\in\mbR^{n+1}\setminus\{0\},
    }
    so that $m_{\tilde F}=1$. With $\tilde f(y)=\tilde F(-y,1)$ we have $D\tilde f=m_F^{-1}Df$, hence any $C^2$ solution $u$ of \eqref{defn:AMSE} with free boundary condition \eqref{defn:anisotropic-free-bdry-Intro} for $f$ also satisfies it for $\tilde f$. Thus all results of Section \ref{Sec:2-ani-minimal-graph} remain valid with $\tilde F$. Moreover,
    \eq{
    W_{\tilde f}(x)
    =\tilde f(Du(x))
    =\tilde F(\nu(x))\sqrt{1+\abs{Du(x)}^2}
    \geq1,\quad\forall x\in\overline{\mbR^n_+},
    }
    so $\log W_{\tilde f}\ge 0$ on $\overline{\mbR^n_+}$.
\end{remark}
Having in mind Remark \ref{Rem:tilde-F}, in all follows (until the end of Section \ref{Sec:5}), we may assume without loss of generality that the uniformly elliptic integrand $F$ satisfies $m_F\geq1$ (so that $\log W_f\geq0$), otherwise we could simply replace $F$ by $\tilde F$.

Now for any $x_0\in\overline{\mbR^n_+}$ fixed, we define a Lipschitz cut-off function $\eta$ on $\mbR^n_+$ with compact support, satisfying $\eta\equiv1$ on $B_{r,+}(x_0)$, $\eta\equiv0$ outside $B_{2r,+}(x_0)$, and $\abs{D\eta}\leq\frac2r$.

\begin{lemma}\label{Lem:integral-estimate-1}
Let $u$ be a $C^2$-solution to \eqref{defn:AMSE} on $\mbR^n_+$ satisfying the free boundary boundary condition in the anisotropic sense \eqref{defn:anisotropic-free-bdry-Intro}.
Then for any $x_0\in\overline{\mbR^n_+}$,
\eq{\label{eq:Ding24-(3.1)}
\int_{B_{2r,+}(x_0)\cap\left\{u>-r\right\}}\abs{D(\log W_f)}\eta
\leq C(n,F)r^{-1}\int_{B_{2r,+}(x_0)\cap\left\{u>-2r\right\}}W_f,
}
where $\eta$ is given above.
\end{lemma}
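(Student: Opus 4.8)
The plan is to test the first variation formula for $\S$ (Proposition \ref{Prop:1st-variation-aniso}) with a carefully chosen vector field built from $\log W_f$, the cut-off $\eta$, and a truncation of $u$, exploiting the two geometric facts established in Section \ref{Sec:2-ani-minimal-graph}: the weighted $F$-subharmonicity of $\log W_f$ (inequality \eqref{eq:laplace-log-v}) and the boundary tangency $g(\na_F W_f,\mu)\equiv0$ on $\p\S$ (Lemma \ref{Lem:bdry-tangential-property}). Concretely, set $\psi\coloneqq\log W_f$, which is non-negative by the normalization $m_F\geq1$. I would integrate the differential inequality \eqref{eq:laplace-log-v} against a test function of the form $\zeta^2$, where $\zeta$ combines $\eta$ (the spatial cut-off supported in $B_{2r,+}(x_0)$) and a Lipschitz truncation $\chi$ of $u$ equal to $1$ on $\{u>-r\}$, vanishing on $\{u\leq -2r\}$, and with $|\na\chi|\lesssim r^{-1}W^{-1}$ on $\S$ (using \eqref{ineq:bar-nabla-D-relation}). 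The point of truncating by $u$ is that the region $\{u>-2r\}\cap B_{2r,+}(x_0)$ is exactly the domain on the right-hand side of \eqref{eq:Ding24-(3.1)}; this is Trudinger's device for localizing the integral estimate along the graph.

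The key steps, in order: (i) Choose the vector field $X$ on $\S$ — essentially $X = F^2(\nu)\,\zeta^2\,\na_F\psi$ recast as a compactly supported $\mathbb{R}^{n+1}$-valued field, or equivalently apply the weighted divergence theorem on $\S$ directly to ${\rm div}_\S(F^2(\nu)\na_F\psi\,\zeta^2)$. Since $\zeta$ vanishes away from $B_{2r,+}(x_0)\cap\{u>-2r\}$, the only boundary contribution is over $\p\S$, and there it is $\int_{\p\S} F^2(\nu)\zeta^2\, g(\na_F\psi,\mu)$, which vanishes by Lemma \ref{Lem:bdry-tangential-property} because $g(\na_F\psi,\mu) = W_f^{-1}g(\na_F W_f,\mu) = 0$. (ii) Expanding the divergence and moving the $\zeta^2\,{\rm div}_\S(F^2(\nu)\na_F\psi)$ term to one side, invoke \eqref{eq:laplace-log-v} to bound it below by $F^2(\nu)\zeta^2\,g(\na\psi,\na_F\psi)\geq \lambda_F m_F^2\,\zeta^2|\na\psi|^2$ (using \eqref{ineq:bar-nabla-D-relation} and \eqref{eq:psi-range}), giving
\eq{
\int_\S \zeta^2|\na\psi|^2
\leq C(n,F)\int_\S |\zeta|\,|\na\zeta|\,|\na\psi|
\leq \tfrac12\int_\S\zeta^2|\na\psi|^2 + C(n,F)\int_\S|\na\zeta|^2,
}
where the second step is Cauchy-Schwarz with absorption. (iii) Absorbing, this yields $\int_\S \zeta^2|\na\psi|^2 \leq C(n,F)\int_\S|\na\zeta|^2$. (iv) Then by Cauchy-Schwarz once more, $\int_\S \zeta|\na\psi| \leq (\int_\S\zeta^2|\na\psi|^2)^{1/2}(\int_{\spt\zeta}1)^{1/2} \leq C(n,F)(\int_\S|\na\zeta|^2)^{1/2}(\mcH^n(\spt\zeta))^{1/2}$; one then bounds both the gradient-of-cutoff integral by $C(n,F)r^{-2}\mcH^n(\spt\zeta \cap \S)$ and rewrites $\mcH^n$-measure on $\S$ as $\int W_f\,\rd x$ over the corresponding region in $\mbR^n_+$, since the area element of the graph is $W = F(\nu)^{-1}W_f$, comparable to $W_f$ by \eqref{ineq:comparable}. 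Finally, convert $\int_\S\zeta|\na\psi|$ back to $\int \eta|D(\log W_f)|$ over $B_{2r,+}(x_0)\cap\{u>-r\}$ using \eqref{ineq:bar-nabla-D-relation} and $\zeta\geq\eta$ there, arriving at \eqref{eq:Ding24-(3.1)}.

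The main obstacle I anticipate is the bookkeeping in step (i)–(ii): one must verify that the weighted divergence theorem on $\S$ with boundary applies to the truncated, merely Lipschitz test function $\zeta^2$ (a standard approximation argument, truncating $\chi$ smoothly), and — more delicately — that the boundary integral genuinely vanishes. The vanishing relies on $\zeta$ being a legitimate test function up to $\p\S$ (it is, since $\eta$ and the truncation of $u$ are both defined on all of $\overline{\mbR^n_+}$), and on the identity $g(\na_F\psi,\mu)=W_f^{-1}g(\na_FW_f,\mu)=0$ from Lemma \ref{Lem:bdry-tangential-property}; no capillary-angle restriction enters because the boundary term is eliminated outright rather than estimated. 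A secondary technical point is keeping track of the constants when passing between $|\na\psi|$ on $\S$ and $|D\log W_f|$ on $\mbR^n_+$, and between $\mcH^n\llcorner\S$ and Lebesgue measure weighted by $W_f$; these are all controlled by \eqref{ineq:comparable}, \eqref{ineq:bar-nabla-D-relation}, and \eqref{eq:psi-range}, so the final constant depends only on $n$ and $F$ as claimed.
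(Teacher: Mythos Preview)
Your proposal is correct and follows essentially the same route as the paper: multiply the weighted subharmonicity inequality \eqref{eq:laplace-log-v} by the square of a cutoff $\zeta=\eta\cdot\chi$ (the paper writes $\xi=\eta(x)\tau(x_{n+1})$, which on $\S$ is the same object as your $\eta\cdot\chi(u)$), integrate by parts with the boundary term killed by Lemma \ref{Lem:bdry-tangential-property}, absorb via Cauchy--Schwarz to get the $L^2$ Caccioppoli estimate, then H\"older down to $L^1$ and convert to $\mbR^n_+$ via the area formula and \eqref{ineq:comparable}. One small slip: your bound $|\na\chi|\lesssim r^{-1}W^{-1}$ should read $|\na\chi|\lesssim r^{-1}$ (since $\chi=\chi(x_{n+1})$ on $\S$ has $|\na\chi|=|\chi'|\,|(e_{n+1})^T|\leq|\chi'|$), but this only helps and the rest goes through unchanged.
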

\begin{proof}
Let $\xi$ be a compactly supported Lipschitz function on $\mbR^{n+1}$.
By \eqref{eq:laplace-log-v}, then using integration by parts in conjunction with Lemma \ref{Lem:bdry-tangential-property}, we find
\eq{
&\int_\S\xi^2
g\left(\na\log W_f,\na_F\log W_f\right)F^2(\nu)
\leq\int_\S\xi^2{\rm div}_\S\left(F^2(\nu)\na_F\log W_f\right)
=-2\int_\S\xi g(\na\xi,\na_F\log W_f)F^2(\nu)\\
\leq&\frac12\int_\S\xi^2
g\left(\na\log W_f,\na_F\log W_f\right)F^2(\nu)
+2\int_\S
g\left(\na\xi,\na_F\xi\right)F^2(\nu),
}
which implies
\eq{
\int_\S
\xi^2g\left(\na\log W_f,\na_F\log W_f\right)F^2(\nu)
\leq4\int_\S
g\left(\na\xi,\na_F\xi\right)F^2(\nu).
}
Choose $\xi(x,x_{n+1})\coloneqq\eta(x)\tau(x_{n+1})$, where $\eta$ is given above and $\tau$ is a cut-off function on $\mbR^1$ with $0\leq\tau\leq1$, $\tau\equiv1$ in $(-r,\sup_{B_{2r,+}(x_0)}u)$, $\tau\equiv0$ outside $(-2r,r+\sup_{B_{2r,+}(x_0)}u)$, and the derivative of $\tau$ satisfies $\abs{\tau'}<\frac4r$. 
By \eqref{ineq:bar-nabla-D-relation-n+1} and area formula, we thus find
\eq{
&m_{F}^2\int_{\S}
\xi^2g\left(\na\log W_f,\na_F\log W_f\right)
\leq\int_{\S}
\xi^2g\left(\na\log W_f,\na_F\log W_f\right)F^2(\nu)\\
\leq& 64\Lambda_{F}M_{F}^2r^{-2}\mcH^n(\S\cap{\rm spt}(\xi))
\leq 64\Lambda_{F}M_{F}^2r^{-2}\int_{B_{2r,+}(x_0)\cap\left\{u\geq-2r\right\}}W.
}
Therefore, by \eqref{ineq:bar-nabla-D-relation} and the H\"older inequality,
\eq{
\int_{B_{2r,+}(x_0)\cap\left\{u>-r\right\}}\abs{D(\log W_f)}\eta
\leq\lambda_{F}^{-\frac12}\int_\S\xi
\sqrt{g\left(\na\log W_f,\na_F\log W_f\right)}
\leq C(n,F)r^{-1}\int_{B_{2r,+}(x_0)\cap\left\{u\geq-2r\right\}}W.
}
Finally, recall that $W\leq m_{F}^{-1}W_f$,
we complete the proof.

\end{proof}

\begin{lemma}\label{Lem:integral-estimate-2}
Let $u$ be a $C^2$-solution to \eqref{defn:AMSE} on $\mbR^n_+$ satisfying the free boundary condition in the anisotropic sense \eqref{defn:anisotropic-free-bdry-Intro}.
Then for any $x_0\in\overline{\mbR^n_+}$,
\eq{\label{ineq:Ding24-(3.7)}
\fint_{B_{r,+}(x_0)\cap\left\{x\in \mathbb{R}^n_+,\abs{u(x)}\leq r\right\}}(\log W_f)W_f
\leq C(n,F)r^{-n}\int_{B_{2r,+}(x_0)\cap\{u\geq-2r\}}W_f.
}

\end{lemma}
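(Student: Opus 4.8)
The plan is to run Trudinger's integral method: from the fact that $\log W_f$ is a (weighted) $F$-subharmonic function on $\S$ satisfying a Neumann-type condition along $\p\S$, extract a Caccioppoli-type inequality with a cut-off matched to the cylinder appearing in \eqref{ineq:Ding24-(3.7)}, and then upgrade it to the claimed $L^1$ bound using the Sobolev inequality on the graph and Lemma \ref{Lem:integral-estimate-1}. Concretely, by Remark \ref{Rem:tilde-F} we may assume $m_F\ge1$, so $w:=\log W_f\ge0$ on $\overline{\mbR^n_+}$; by \eqref{eq:laplace-log-v} we have ${\rm div}_\S\big(F^2(\nu)\na_F w\big)\ge F^2(\nu)\,g(\na w,\na_F w)\ge0$ on $\S$, and since $\na_F w=W_f^{-1}\na_F W_f$, Lemma \ref{Lem:bdry-tangential-property} gives $g(\na_F w,\mu)\equiv0$ along $\p\S$. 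Hence every boundary integral over $\p\S$ produced by integrating by parts over $\S$ against a function depending only on the ambient point will vanish.

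First I would fix exactly the cut-off used in Lemma \ref{Lem:integral-estimate-1}: $\xi(x,x_{n+1})=\eta(x)\tau(x_{n+1})$ with $\eta\equiv1$ on $B_{r,+}(x_0)$, $\spt\eta\subset B_{2r,+}(x_0)$, $|D\eta|\le 2/r$, and $\tau$ a vertical cut-off with $\tau\equiv1$ on $(-r,\sup_{B_{2r,+}(x_0)}u)$, $\spt\tau\subset(-2r,r+\sup_{B_{2r,+}(x_0)}u)$, $|\tau'|\le4/r$; then $\xi\equiv1$ on the part of $\S$ over $B_{r,+}(x_0)\cap\{|u|\le r\}$ (the region on the left of \eqref{ineq:Ding24-(3.7)}) and $\S\cap\spt\xi$ lies over $B_{2r,+}(x_0)\cap\{u\ge-2r\}$ (the region on the right). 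Testing \eqref{eq:laplace-log-v} against $\xi^2$, integrating by parts (the $\p\S$-integral vanishes by Lemma \ref{Lem:bdry-tangential-property}), and using Cauchy--Schwarz and Young one gets the Caccioppoli-type bound $\int_\S\xi^2 g(\na w,\na_F w)F^2(\nu)\,\rd\mcH^n\le 4\int_\S g(\na\xi,\na_F\xi)F^2(\nu)\,\rd\mcH^n\le \frac{C(n,F)}{r^2}\mcH^n(\S\cap\spt\xi)$, and since $\mcH^n(\S\cap\spt\xi)=\int W\,\rd x\le m_F^{-1}\int_{B_{2r,+}(x_0)\cap\{u\ge-2r\}}W_f\,\rd x$ by \eqref{ineq:comparable}, the right-hand side carries only one power of $W_f$. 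I would then apply the Sobolev inequality on the anisotropic minimal graph with free boundary (cf.\ \eqref{ineq:MS-intro}--\eqref{ineq:trace-intro}: it has no boundary term, the trace being absorbed via \eqref{ineq:trace-intro}) to $\varphi=\xi w$, split $|\na(\xi w)|\le\xi|\na w|+w|\na\xi|$, bound $\int_\S\xi|\na w|\,\rd\mcH^n$ by Cauchy--Schwarz against the Caccioppoli estimate (using \eqref{ineq:bar-nabla-D-relation}, so that the area element in $\rd\mcH^n$ cancels one factor $W^{-1}$, exactly as in Lemma \ref{Lem:integral-estimate-1}, producing again only a single power of $W_f$), control the remaining piece $\int_\S w|\na\xi|\,\rd\mcH^n$ — supported where $D\eta\ne0$ or $\tau'\ne0$ — through Lemma \ref{Lem:integral-estimate-1} and a standard iteration/absorption argument (working with dyadic cylinders between $B_{r,+}(x_0)$ and $B_{2r,+}(x_0)$, using that the Sobolev exponent $n/(n-1)>1$ makes the errors summable), and finally pass back to Euclidean integrals via $\rd\mcH^n=W\,\rd x$ together with $m_F W\le W_f\le M_F W$ and $w=\log W_f$ to reach \eqref{ineq:Ding24-(3.7)}.

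The step I expect to be the main obstacle is the accounting of powers of the area element $W$. Since $W$ is comparable to $W_f$ by \eqref{ineq:comparable}, any integral over $\S$ of a quantity already weighted by $W_f$ effectively acquires an extra factor of $W_f$ relative to the corresponding integral over $\mbR^n_+$, so a crude estimate produces $\int W_f^2\,\rd x$ on the right instead of the required $\int W_f\,\rd x$. The cure, already visible in the proof of Lemma \ref{Lem:integral-estimate-1}, is to arrange every integration by parts so that $\na w$ (equivalently, factors depending on $Du$) never appears above the first power inside a $\rd\mcH^n$-integral, to keep all test functions position-dependent so Lemma \ref{Lem:bdry-tangential-property} removes the boundary contributions, and to keep the trace estimate \eqref{ineq:trace-intro} at hand to absorb the codimension-one term in the Sobolev inequality; the anisotropic free boundary condition enters the whole argument only through these two facts.
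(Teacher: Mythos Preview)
Your approach is genuinely different from the paper's, and it has a real gap. The iteration you sketch via Sobolev on $\S$ would need to absorb a term whose coefficient is of order $\mcH^n(\S\cap\spt\xi)^{1/n}/r$ (coming from H\"older after the Michael--Simon step, or equivalently from matching the $L^{2n/(n-1)}$ and $L^2$ norms in Proposition~\ref{Prop:Sobolev-ineq}). But at this point of the argument the graphical area $\mcH^n(\S\cap\spt\xi)\sim\int W_f$ is \emph{not} known to be $\le C r^n$; Lemma~\ref{Lem:integral-estimate-3} only bounds it by $r^n(\mfc_1+\mfc_2 r^{-1}\sup u)$, and $\sup u/r$ is not under control. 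So the absorption/iteration does not close, and your ``errors summable because $n/(n-1)>1$'' step does not go through. A related issue is the $\int_\S\varphi\abs{H}\,\rd\mcH^n$ term in the $L^1$ Michael--Simon inequality: the stability inequality \eqref{ineq:stability-ineq} handles $\int\varphi^2\abs{h}^2$ cleanly, but controlling $\int\xi w\abs{H}$ forces you back to an $L^2$ norm of $\xi w$, which again needs the missing area bound.

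The paper avoids all of this by working entirely on the base domain rather than on $\S$. One multiplies the divergence-form equation ${\rm div}(Df(Du))=0$ by the test function $u_r(\log W_f)\eta$, where $u_r$ is the truncation of $u$ at levels $\pm r$, and integrates by parts; the free boundary condition \eqref{defn:anisotropic-free-bdry-Intro} kills the boundary term on $\p\mbR^n_+$ directly. The term $\left<Df(Du),Du_r\right>(\log W_f)\eta$ equals $\left<Df(Du),Du\right>(\log W_f)\eta$ on $\{\abs{u}\le r\}$, and the one-homogeneity of $F$ (via \eqref{eq:<DF(z),z>=F(z)} and \eqref{ineq:bar-D-F-min-max}) gives the pointwise lower bound $\left<Df(Du),Du\right>\ge m_F M_F^{-1}W_f-M_F$, which produces the weight $W_f$ on the left of \eqref{ineq:Ding24-(3.7)} for free. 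The remaining two terms involve $\left<Df(Du),D(\log W_f)\right>$ and $\left<Df(Du),D\eta\right>$; since $\abs{Df(Du)}\le M_F$ uniformly, the first is handled by Lemma~\ref{Lem:integral-estimate-1} and the second by $\log W_f\le W_f$. No Sobolev inequality, no mean curvature term, no iteration. The moral is that the extra $W_f$ weight you correctly flagged as the obstacle is manufactured algebraically from the equation, not analytically from Sobolev embedding.
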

\begin{proof}
Let $\eta$ be as above. 
We  consider the truncated function
\eq{
u_r
\coloneqq
\begin{cases}
    2r,\quad&\text{for }u\geq r,\\
    u+r,\quad&\text{for }\abs{u}\leq r,\\
    0,\quad&\text{for }u\leq-r.
\end{cases}
}
Multiplying  \eqref{defn:AMSE} by $u_r (\log W_f) \eta$, integrating by parts and using \eqref{defn:anisotropic-free-bdry-Intro} give 
\eq{\label{eq:Ding24-(3.11)}
0
=&\int_{\mbR^n_+}\left<Df(Du),D\left(u_r(\log W_f)\eta\right)\right>\\
=&\int_{\left\{\abs{u}\leq r\right\}}\left<Df(Du),Du\right>(\log W_f)\eta+\int_{\mbR^n_+}u_r\eta\left<Df(Du),D(\log W_f)\right>
+\int_{\mbR^n_+}u_r(\log W_f)\left<Df(Du),D\eta\right>.
}
Since  $Df(Du)=-DF(-Du,1)$ and \eqref{ineq:bar-D-F-min-max}, we have 
\eq{\label{esti:Df(Du)-length}
\abs{Df(Du)}\leq\abs{\bar DF(-Du,1)}
\leq M_{F}.
}
Thus for all $r>0$,
 the last two terms in \eqref{eq:Ding24-(3.11)} can be estimated by
\eq{\label{eq:Ding24-(3.11')}
&\int_{\mbR^n_+}u_r\eta\left<Df(Du),D(\log W_f)\right>
+\int_{\mbR^n_+}u_r(\log W_f)\left<Df(Du),D\eta\right>\\
\geq&-2M_{F}r\int_{\left\{u>-r\right\}}\Abs{D(\log W_f)}\eta-{4M_{F}}\int_{B_{2r,+}(x_0)\cap\left\{u>-r\right\}}(\log W_f).
}
Therefore, by virtue of \eqref{eq:Ding24-(3.11)} and \eqref{eq:Ding24-(3.11')}, we obtain
\eq{\label{equality}
\int_{\left\{\abs{u}\leq r\right\}}\left<Df(Du),Du\right>(\log W_f)\eta
\leq 2M_{F} r\int_{\left\{u>-r\right\}}\Abs{D(\log W_f)}\eta+{4M_{F}}\int_{B_{2r,+}(x_0)\cap\left\{u>-r\right\}}(\log W_f).
}

Since $F$ is 1-homogeneous, we find 
\eq{
F(\nu)
=\left<\bar DF(-Du,1),\nu\right>
=W^{-1}\left<DF(-Du,1),-Du\right>+W^{-1}\left<\bar DF(-Du,1),e_{n+1}\right>.
}
Combining with \eqref{ineq:bar-D-F-min-max} and \eqref{ineq:comparable}, we thus obtain
\eq{\label{esti:<Df(Du),Du>}
M^{-1}_{F}m_{F}W_f
\leq m_{F}W
\leq\left<Df(Du),Du\right>+M_{F}.
}
From this and \eqref{equality} we deduce that
\eq{\label{eq:Ding24-(3.12)}
&\int_{\left\{\abs{u}\leq r\right\}}(\log W_f)W_f\eta
\overset{\eqref{ineq:comparable}}{\leq} M_{F}\int_{\left\{\abs{u}\leq r\right\}}(\log W_f)W\eta
\leq M_{F}\int_{\left\{\abs{u}\leq r\right\}}m^{-1}_{F}\left(\left<Df(Du),Du\right>+M_{F}\right)(\log W_f)\eta\\
\leq&\frac{M^2_{F}}{m_{F}}\left(2r\int_{\left\{u>-r\right\}}\Abs{D(\log W_f)}\eta+5\int_{B_{2r,+}(x_0)\cap\left\{u\geq-r\right\}}(\log W_f)\right).
}
By \eqref{eq:Ding24-(3.1)}, we have (noting that $\log W_f\leq W_f$)
\eq{
\int_{B_{r,+}(x_0)\cap\left\{\abs{u}\leq r\right\}}(\log W_f)W_f
\leq C(n,F)\int_{B_{2r,+}(x_0)\cap\{u\geq-2r\}}W_f,
}
which completes the proof.

\end{proof}

\begin{lemma}\label{Lem:integral-estimate-3}
Let $u$ be a $C^2$-solution to \eqref{defn:AMSE} on $\mbR^n_+$ satisfying the free boundary boundary condition in the anisotropic sense \eqref{defn:anisotropic-free-bdry-Intro}.
There exist positive constants $\mfc_1=\mfc_1(n,F)$ and $\mfc_2=\mfc_2(n,F)$ such that for any $x_0\in\overline{\mbR^n_+}$,
\eq{
r^{-n}\int_{B_{r,+}(x_0)\cap\left\{u>-r\right\}}W_f
\leq\left(\mfc_1+\frac{\mfc_2}r\sup_{B_{2r,+}(x_0)}u\right),\quad\forall r>0.
}
\end{lemma}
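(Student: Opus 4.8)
\emph{Proposal.} The plan is to test the divergence-form equation \eqref{defn:AMSE} against a compactly supported Lipschitz function built from a truncation of $u$ and the cutoff $\eta$ fixed above, using the anisotropic free boundary condition \eqref{defn:anisotropic-free-bdry-Intro} to discard the boundary term on $\p\mbR^n_+$, and then to read off the volume bound from the pointwise inequalities \eqref{esti:Df(Du)-length} and \eqref{esti:<Df(Du),Du>} already exploited in the proof of Lemma \ref{Lem:integral-estimate-2}. We may assume $L:=\sup_{B_{2r,+}(x_0)}u\ge -r$, since otherwise $B_{r,+}(x_0)\cap\{u>-r\}=\es$ and the left-hand side vanishes. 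Set $v:=(u+r)^+$; this is Lipschitz on $\overline{\mbR^n_+}$ with $Dv=Du$ a.e.\ on $\{u>-r\}$ and $Dv=0$ a.e.\ on $\{u\le -r\}$, and on $\spt\eta\subset\overline{B_{2r,+}(x_0)}$ (where $u\le L$ by continuity) it satisfies $0\le v\le L+r$.

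Testing \eqref{defn:AMSE} against $v\eta$ in the first variation identity \eqref{vari_formula1} (which extends to Lipschitz test functions) and using \eqref{defn:anisotropic-free-bdry-Intro} to annihilate the integral over $\p\mbR^n_+$ gives
\[
0=\int_{\mbR^n_+}\eta\,\langle Df(Du),Dv\rangle+\int_{\mbR^n_+}v\,\langle Df(Du),D\eta\rangle .
\]
For the first integral, since $Dv=Du$ on $\{u>-r\}$, inequality \eqref{esti:<Df(Du),Du>} gives $\langle Df(Du),Dv\rangle\ge M_F^{-1}m_FW_f-M_F$ there; combined with $\eta\ge0$, $\eta\equiv1$ on $B_{r,+}(x_0)$, and $W_f\ge0$, this yields
\[
\int_{\mbR^n_+}\eta\,\langle Df(Du),Dv\rangle\ \ge\ M_F^{-1}m_F\int_{B_{r,+}(x_0)\cap\{u>-r\}}W_f\ -\ M_F\,\mcH^n\bigl(B_{2r,+}(x_0)\bigr).
\]
For the second integral, \eqref{esti:Df(Du)-length} gives $|Df(Du)|\le M_F$, while $|D\eta|\le\tfrac2r$ is supported in $B_{2r,+}(x_0)$ and $0\le v\le L+r$ on that set, so
\[
\Bigl|\int_{\mbR^n_+}v\,\langle Df(Du),D\eta\rangle\Bigr|\ \le\ \frac{2M_F(L+r)}{r}\,\mcH^n\bigl(B_{2r,+}(x_0)\bigr).
\]

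Combining the three displays, rearranging, and using $\mcH^n(B_{2r,+}(x_0))\le C(n)r^n$ gives
\[
r^{-n}\int_{B_{r,+}(x_0)\cap\{u>-r\}}W_f\ \le\ C(n,F)\Bigl(1+\frac{L+r}{r}\Bigr)=\mfc_1+\frac{\mfc_2}{r}\sup_{B_{2r,+}(x_0)}u ,
\]
with $\mfc_1=\mfc_1(n,F)$, $\mfc_2=\mfc_2(n,F)$ (recalling $m_F\ge1$ by the normalization preceding Lemma \ref{Lem:integral-estimate-1}, so all constants depend only on $n$ and $F$). I expect the only genuine point — as opposed to bookkeeping — to be the first step: choosing the test function truncated precisely at the level $-r$ to match the region $\{u>-r\}$, together with the observation that testing against a function which need not vanish near $\p\mbR^n_+$ still produces no surviving boundary term, thanks to the variational free boundary condition \eqref{defn:anisotropic-free-bdry-Intro}. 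A small care is needed in the first integral, since $\langle Df(Du),Du\rangle$ is not pointwise nonnegative: the error $-M_F$ must be carried through and absorbed into $\mfc_1 r^n$ via $\mcH^n(B_{2r,+}(x_0))\lesssim r^n$. This estimate is the anisotropic free boundary counterpart of the classical area bound for minimal graphs underlying Trudinger \cite{Trudinger72} and Bombieri--De Giorgi--Miranda.
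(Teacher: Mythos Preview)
Your proposal is correct and follows essentially the same approach as the paper: both test \eqref{defn:AMSE} against $\eta\,(u+r)^+$, use \eqref{defn:anisotropic-free-bdry-Intro} to kill the boundary term, and then extract the $W_f$-integral from $\int_{\{u>-r\}}\eta\,\langle Df(Du),Du\rangle$ via \eqref{esti:<Df(Du),Du>} while bounding the remaining term using $\abs{Df(Du)}\le M_F$ and $\abs{D\eta}\le\tfrac2r$. Your additional remarks (handling the case $L<-r$ and tracking the sign of $\langle Df(Du),Du\rangle$) are tidy but do not depart from the paper's argument.
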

\begin{proof}
For any Lipschitz function $\xi$ with compact support on $\mbR^n$, by \eqref{defn:AMSE} and \eqref{defn:anisotropic-free-bdry-Intro} we have
\eq{
0
=\int_{\mbR^n_+}\left<Df(Du),D\xi\right>.
}
Letting $\xi\coloneqq\eta\max\{u+r,0\}$, where $\eta$ is given above, 
we have
\eq{\label{eq_a1} 
-\int\left<Df(Du),D\eta\right>\max\{u+r,0\}
=\int_{\left\{u\geq-r\right\}}\eta\left<Df(Du),Du\right>.
}
By \eqref{esti:<Df(Du),Du>} and \eqref{eq_a1}, we obtain
\eq{
&m_{F}\int_{B_{r,+}(x_0)\cap\left\{u>-r\right\}}W_f
\leq m_{F}\int_{\{u>-r\}}W_f\eta
\leq M_{F}\int_{\{u>-r\}}\left<Df(Du),Du\right>\eta+M_{F}^2\int_{\{u>-r\}}\eta\\
\leq&M_{F}^2\int_{B_{2r,+}(x_0)\cap\left\{u\geq-r\right\}}1 +\frac2r M_{F}\int_{B_{2r,+}(x_0)\cap\left\{u\geq-r\right\}}(u+r)\abs{Df(Du)}.
}
In view of  \eqref{esti:Df(Du)-length}, we deduce as required
\eq{
\int_{B_{r,+}(x_0)\cap\left\{u>-r\right\}}W_f
\leq M^2_{F}m_{F}^{-1}r^n\left(c_1(n)+\frac{c_2(n)}r\sup_{B_{2r,+}(x_0)}u\right).
}
\end{proof}

\section{Mean value inequality}\label{Sec:4}
\subsection{Area growth estimates}
Let $u$ be a $C^2$-solution to \eqref{defn:AMSE} on $\mbR^n_+$ satisfying the free boundary condition in the anisotropic sense \eqref{defn:anisotropic-free-bdry-Intro}.
Fix any $x_0\in\overline{\mbR^n_+}$,
assume $u(x_0)=0$,
so that $(x_0,0)\in\S$.
With a slight abuse of notation, we do not distinguish $x_0$, $(x_0,0)$, but simply write $x_0\in\S\subset\mbR^{n+1}$.

For any $r>0$, denote by $B^{n+1}_{2r,+}(x_0)$ the truncated ball of radius $2r$ in $\mbR^{n+1}_+$, centered at $x_0$.
We put
\eq{
\Om_+\coloneqq\left\{(x,t)\in B_{2r,+}(x_0)\times\mbR:t>u(x)\right\},\\
\Om_-\coloneqq\left\{(x,t)\in B_{2r,+}(x_0)\times\mbR:t<u(x)\right\}.
}

\begin{lemma}\label{Lem:volume-lower-bound}
There exists a positive constant $\mfc_\star=\mfc_\star(n,F)$ such that for any $r>0$, we have
\eq{
\mcH^{n+1}\left(\Om_+\cap B^{n+1}_r(x_0)\right)
\geq \mfc_\star\mcH^{n+1}(B^{n+1}_{\frac{r}2,+}(x_0)),
} where $\mcH^{n+1}$ is the $(n+1)$-dimensional Hausdorff measure.
\end{lemma}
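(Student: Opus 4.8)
The plan is to prove the equivalent lower bound $\mcH^{n+1}\big(\Om_+\cap B^{n+1}_r(x_0)\big)\geq\mfc(n,F)\,r^{n+1}$, and then conclude since $\mcH^{n+1}\big(B^{n+1}_{r/2,+}(x_0)\big)\leq\mcH^{n+1}\big(B^{n+1}_{r/2}\big)=c_n r^{n+1}$ for a dimensional constant $c_n$. First I would introduce the epigraph
\[
E\coloneqq\big\{(x,t):x\in\mbR^n_+,\ t>u(x)\big\}\subset\mbR^{n+1}_+,
\]
a set of locally finite perimeter whose reduced boundary inside $\mbR^{n+1}_+$ is exactly $\S$. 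Since any point $(x,t)\in B^{n+1}_r(x_0)\cap E$ automatically has $x\in B_{2r}(x_0)\cap\mbR^n_+=B_{2r,+}(x_0)$, one has $\Om_+\cap B^{n+1}_r(x_0)=E\cap B^{n+1}_{r,+}(x_0)$, so it suffices to bound $v(r)\coloneqq\mcH^{n+1}\big(E\cap B^{n+1}_{r,+}(x_0)\big)$ from below; recall also that $(x_0,0)=(x_0,u(x_0))$ lies on $\S$, so by continuity of $u$ we have $v(\rho)>0$ for all $\rho>0$.

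The essential ingredient is that $E$ is a local minimizer of the anisotropic perimeter $\mcA_F(\Gamma)=\int_\Gamma F(\nu)\rd\mcH^n$ in $\mbR^{n+1}_+$ in the \emph{free-boundary sense}: whenever $E'$ is a set of finite perimeter with $E\triangle E'\subset\subset B^{n+1}_\rho(x_0)$ — the difference being allowed to meet $\p\mbR^{n+1}_+$ — one has $\mcA_F\big(\p^*E\cap B^{n+1}_{\rho,+}(x_0)\big)\leq\mcA_F\big(\p^*E'\cap B^{n+1}_{\rho,+}(x_0)\big)$. This is the standard calibration argument: the vector field $Y(x,t)\coloneqq\nu_F(x,u(x))=\big(-Df(Du(x)),F_{n+1}(-Du(x),1)\big)$, extended independently of $t$, satisfies $\bar{\rm div}\,Y=-{\rm div}\big(Df(Du)\big)=0$ by \eqref{defn:AMSE}; it satisfies $\langle Y,w\rangle=\langle\bar DF(\nu),w\rangle\leq F(w)$ for every direction $w$, with equality along $\S$, by \eqref{eq:<DF(z),z>=F(z)} and convexity of $F$; and it satisfies $\langle Y,e_1\rangle\equiv0$ on $\p\mbR^{n+1}_+$ by the free boundary condition \eqref{defn:anisotropic-free-bdry-Intro}. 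Applying the measure-theoretic Gauss--Green formula to $Y$ on $(E\triangle E')\cap\mbR^{n+1}_+$ then makes the contribution along $\p\mbR^{n+1}_+$ drop out, which yields the comparison.

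With this in hand I would run the classical density estimate. By the coarea formula, $v'(\rho)=\mcH^n\big(E\cap\p B^{n+1}_\rho(x_0)\cap\mbR^{n+1}_+\big)$ for a.e.\ $\rho\in(0,r)$. Comparing $E$ with the competitor $E\setminus B^{n+1}_\rho(x_0)$ and invoking the structure theory of sets of finite perimeter gives, for a.e.\ $\rho$,
\[
m_F\,\mcH^n\big(\S\cap B^{n+1}_{\rho,+}(x_0)\big)\leq\mcA_F\big(\S\cap B^{n+1}_{\rho,+}(x_0)\big)\leq M_F\,\mcH^n\big(E\cap\p B^{n+1}_\rho(x_0)\cap\mbR^{n+1}_+\big)=M_F\,v'(\rho).
\]
Since the relative perimeter of $E\cap B^{n+1}_{\rho,+}(x_0)$ in $\mbR^{n+1}_+$ is at most $\mcH^n\big(\S\cap B^{n+1}_{\rho,+}(x_0)\big)+v'(\rho)$, the relative isoperimetric inequality in the half-space $\mbR^{n+1}_+$ (valid with a purely dimensional constant, e.g.\ by even reflection across $\p\mbR^{n+1}_+$) gives
\[
v(\rho)^{\frac{n}{n+1}}\leq C(n)\Big(\tfrac{M_F}{m_F}+1\Big)v'(\rho)\qquad\text{for a.e.\ }\rho\in(0,r).
\]
As $v(\rho)>0$ for $\rho>0$ and $v(0^+)=0$, rewriting this as $\frac{\rd}{\rd\rho}\big(v(\rho)^{1/(n+1)}\big)\geq c(n,F)>0$ and integrating over $(0,r)$ gives $v(r)\geq c(n,F)^{n+1}r^{n+1}$, which is the claim.

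The step I expect to be the main obstacle is making the free-boundary minimality fully rigorous: checking that the calibration argument genuinely applies to competitors touching $\p\mbR^{n+1}_+$, together with the measure-theoretic bookkeeping for $\p^*\big(E\cap B^{n+1}_\rho(x_0)\big)$ on the sphere $\p B^{n+1}_\rho(x_0)$ and along $\p\mbR^{n+1}_+$, so that the comparison inequality and the isoperimetric step dovetail cleanly; the remaining ODE argument is routine.
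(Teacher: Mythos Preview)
Your proof is correct and follows essentially the same density-estimate argument as the paper: use free-boundary minimality of the epigraph to bound the graph area in a ball by the spherical slice, feed this into a relative isoperimetric inequality in $\mbR^{n+1}_+$ to get a differential inequality for $v(\rho)$, and integrate. The only cosmetic differences are that the paper asserts the minimality without writing out the calibration you sketch, and invokes an \emph{anisotropic} relative isoperimetric inequality in the half-space (citing \cite{CRS16}) where you use the Euclidean one via even reflection.
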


\begin{proof}
For any $0<s<r$, we put $\Om_{s}\coloneqq\Om_+\cap\p_{rel} B^{n+1}_{s,+}(x_0)$, where for a set $E\subset\mbR^{n+1}_+$, $\p_{rel}E$ denotes its relative boundary in $\mbR^{n+1}_+$, i.e., $\p_{rel}E=\overline{\p E\cap\mbR^{n+1}_+}$.
Clearly, we have
\eq{
\p_{rel}\Om_{s}
=\p_{rel}\Om_+\cap\p B^{n+1}_s(x_0)
=\p_{rel}\left(\p_{rel}\Om_+\cap B^{n+1}_s(x_0)\right),
}
i.e., $\Omega_s$ and $\p_{rel}\Om_+\cap B^{n+1}_s(x_0)$ have a common relative boundary in $\mbR^{n+1}_+$.
Notice also that $\p_{rel}\Om_+\cap B^{n+1}_s(x_0)$ is a minimizer of the anisotropic area \eqref{defn:anisotropic-area-hypersurface},
we thus find
\eq{
M_F\mcH^n(\Om_s)
\geq\int_{\Om_s}F(\nu)\rd\mcH^n
\geq\int_{\p_{rel}\Om_+\cap B^{n+1}_s(x_0)}F(\nu)\rd\mcH^n
\geq m_F\mcH^n(\p_{rel}\Om_+\cap B^{n+1}_s(x_0)),
}
that is,
\eq{\label{upper bound of ball}
\mcH^n(\p_{rel}\Om_+\cap B^{n+1}_s(x_0))
\leq M_F m_F^{-1}\mcH^n(\Om_s).
}
By the anisotropic relative isoperimetric inequality in $\mbR^{n+1}_+$ (c.f., \cite[Theorem 1.3]{CRS16}), we have
\eq{
\mcH^{n+1}(\Om_+\cap B^{n+1}_{s}(x_0))^\frac{n}{n+1}
\leq C(n,F)P_{F}(\Om_+\cap B^{n+1}_{s}(x_0);\mbR^{n+1}_+)
\leq C(n,F)\mcH^n\left(\p_{rel}(\Om_+\cap B^{n+1}_{s}(x_0))\right),
}
here $P_{F}(E;\mbR^{n+1}_+)$ denotes the anisotropic perimeter for a measurable set $E$ relative to $\mbR^{n+1}_+$, defined as
\eq{
P_{F}(E;\mbR^{n+1}_+)
\coloneqq\int_{\p^\ast E\cap\mbR^{n+1}_+}F(\nu_E)\rd\mcH^n,
}
where $\p^\ast E$ is the reduced boundary of $E$ and $\nu_E$ is the measure-theoretic outer unit normal to $E$.

To proceed,
since by \eqref{upper bound of ball},
\eq{
\mcH^n\left(\p_{rel}(\Om_+\cap B^{n+1}_{s}(x_0))\right)
=\mcH^n(\Om_s)+\mcH^n(\p_{rel}\Om_+\cap B^{n+1}_s(x_0))
\leq\left(1+M_F m_F^{-1}\right)\mcH^n(\Om_s),
}
we get
\eq{
\mcH^{n+1}(\Om_+\cap B^{n+1}_s(x_0))^\frac{n}{n+1}
\leq C(n,F)\mcH^n(\Om_s).
}
By the co-area formula this implies
\eq{
\left(\int^s_0\mcH^n(\Om_t)\rd t\right)^\frac{n}{n+1}
\leq C(n,F)\mcH^n(\Om_s).
}
Clearly,
 $\mcH^n(\Omega_s)>0$ for each $0<s<r$, since $0\in\S$ (recall that we assume $u(0)=0$).
Therefore, \eq{
\frac{\p}{\p s}\left(\int^s_0\mcH^n(\Om_t)\rd t\right)^\frac1{n+1}
\geq C(n,F),
\quad s\in(0,r).
}
Integrating over $(\frac{r}2,r)$ yields
\eq{
\left(\int^r_0\mcH^n(\Om_t)\rd t\right)^\frac1{n+1}
\geq C(n,F)r
\geq C(n,F)\left(\mcH^{n+1}(B^{n+1}_{\frac{r}2,+}(x_0))\right)^\frac1{n+1}.
}
Using the co-area formula again, we deduce as required that
\eq{
\mcH^{n+1}\left(\Om_+\cap B^{n+1}_r(x_0)\right)
=\int^r_0\mcH^n(\Om_s)\rd s
\geq \underbrace{C(n,F)}_{\coloneqq\mfc_\star}\mcH^{n+1}(B^{n+1}_{\frac{r}2,+}(x_0)).
}
\end{proof}
Similarly, for $\Om_-$ we also have
\eq{
\mcH^{n+1}\left(\Om_-\cap B^{n+1}_r(x_0)\right)
\geq \mfc_\star\mcH^{n+1}(B^{n+1}_{\frac{r}2,+}(x_0)),
\quad\forall r>0.
}
By the relative isoperimetric inequality in truncated balls (c.f., \cite[(2.51)]{DePM15}, either applying to $\Om_+$ or $\Om_-$), and note that $\mcH^{n+1}(B^{n+1}_{\frac{r}2,+}(x_0))\geq\mcH^{n+1}(B^{n+1}_{\frac{r}2,+}(0))=C(n)r^{n+1}$, we thus find
\eq{
\mcH^n(\S\cap B^{n+1}_r(x_0))
\geq C(n)(\mfc_\star\mcH^{n+1}(B^{n+1}_{\frac{r}2,+}(x_0)))^{\frac{n}{n+1}}
\geq\mfC_\star r^n,
}
for some $\mfC_\star=\mfC_\star(n,F)>0$. 
On the other hand, 
by \eqref{upper bound of ball}, we obtain
\eq{
\mcH^n(\S\cap B^{n+1}_r(x_0))
\leq \mfC^\star r^n,
}
for some $\mfC^\star=\mfC^\star(n,F)>0$.

For any $r>0$ fixed, we denote by $\mcB_r(x_0)=\S\cap B^{n+1}_r(x_0)$ the \textit{graph ball} of radius $r$.
Combining the above estimates, we obtain:
\begin{proposition}\label{Prop:graph-ball-EVG}
Let $u$ be a $C^2$-solution to \eqref{defn:AMSE} on $\mbR^n_+$ satisfying the free boundary condition in the anisotropic sense \eqref{defn:anisotropic-free-bdry-Intro}, and suppose that $u(x_0)=0$.
There exist positive constants $\mfC^\star=\mfC^\star(n,F)$, $\mfC_\star=\mfC_\star(n,F)$, such that
\eq{\label{esti:graph-ball}
\mfC_\star r^n
\leq\mcH^n(\mcB_r(x_0))
\leq\mfC^\star r^n,\quad\forall r>0.
}
\end{proposition}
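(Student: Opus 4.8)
\emph{Plan.} Both inequalities in \eqref{esti:graph-ball} are to be extracted from the estimates assembled above, so the argument is essentially a packaging step. For the upper bound I would run the construction preceding Lemma~\ref{Lem:volume-lower-bound} at scale $2r$ in place of $r$, so that $\Om_+$ and $\Om_-$ become the epigraph and subgraph of $u$ over $B_{2r,+}(x_0)\times\mbR$; then \eqref{upper bound of ball} holds up to $s=r$, and since the relative boundary $\p_{rel}\Om_+$ ignores $\p\mbR^{n+1}_+$ while the vertical walls of $\Om_+$ sit at radius $2r$, one checks that $\mcB_r(x_0)=\p_{rel}\Om_+\cap B^{n+1}_r(x_0)$. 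Bounding the spherical cap $\Om_r$ by $\mcH^n(\Om_r)\le\mcH^n(\p B^{n+1}_r(x_0))=C(n)r^n$, the inequality \eqref{upper bound of ball} gives $\mcH^n(\mcB_r(x_0))\le M_Fm_F^{-1}C(n)r^n$, which is the claimed upper bound.

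\emph{Lower bound.} Here I would first apply Lemma~\ref{Lem:volume-lower-bound} to $\Om_+$ and its obvious analogue to $\Om_-$ (the subgraph is cut out by the same minimizing graph, so the proof of Lemma~\ref{Lem:volume-lower-bound} applies verbatim), and combine these with $\mcH^{n+1}(B^{n+1}_{r/2,+}(x_0))\ge\mcH^{n+1}(B^{n+1}_{r/2,+}(0))=C(n)r^{n+1}$ to obtain $\mcH^{n+1}(\Om_+\cap B^{n+1}_r(x_0))\ge C(n,F)r^{n+1}$ and $\mcH^{n+1}(\Om_-\cap B^{n+1}_r(x_0))\ge C(n,F)r^{n+1}$. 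Then I would invoke the relative isoperimetric inequality in the truncated ball $B^{n+1}_{r,+}(x_0)$ (\cite[(2.51)]{DePM15}) for the set $E\coloneqq\Om_+\cap B^{n+1}_r(x_0)$: its relative perimeter in $B^{n+1}_{r,+}(x_0)$ equals $\mcH^n(\mcB_r(x_0))$, while both $\mcH^{n+1}(E)$ and $\mcH^{n+1}(B^{n+1}_{r,+}(x_0)\setminus E)=\mcH^{n+1}(\Om_-\cap B^{n+1}_r(x_0))$ are bounded below by a fixed multiple of $r^{n+1}$; this yields $\mcH^n(\mcB_r(x_0))\ge C(n)\min\{\mcH^{n+1}(E),\mcH^{n+1}(B^{n+1}_{r,+}(x_0)\setminus E)\}^{\frac{n}{n+1}}\ge\mfC_\star r^n$.

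\emph{Main point.} The substantive input --- that $\Sigma$, being the relative boundary of both the epigraph and the subgraph, is an anisotropic area minimizer at every scale (which, since $u$ is entire on $\mbR^n_+$, poses no difficulty at large $r$) --- is already built into Lemma~\ref{Lem:volume-lower-bound} and \eqref{upper bound of ball} through the calibration argument, so no new analysis is required. The care needed is purely measure-theoretic bookkeeping: one must confirm that $\p_{rel}\Om_\pm\cap B^{n+1}_r(x_0)$ and the relative perimeter of $\Om_+\cap B^{n+1}_r(x_0)$ in $B^{n+1}_{r,+}(x_0)$ both reduce exactly to $\mcH^n(\mcB_r(x_0))$ --- i.e.\ that no boundary mass hides on $\{x_1=0\}$ or on $\p B^{n+1}_r(x_0)$ (up to the $(n-1)$-dimensional, hence $\mcH^n$-null, set $\p\Sigma\cap B^{n+1}_r(x_0)$) --- and to note that the hypothesis $u(x_0)=0$ enters only through Lemma~\ref{Lem:volume-lower-bound}, where $x_0\in\Sigma$ forces $\mcH^n(\Om_s)>0$ for all $0<s<r$.
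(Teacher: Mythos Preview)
Your proposal is correct and follows essentially the same route as the paper: the upper bound comes directly from \eqref{upper bound of ball} together with the trivial estimate $\mcH^n(\Om_r)\le\mcH^n(\p B^{n+1}_r(x_0))=C(n)r^n$, and the lower bound comes from applying Lemma~\ref{Lem:volume-lower-bound} to both $\Om_+$ and $\Om_-$, then feeding the resulting volume lower bounds into the relative isoperimetric inequality \cite[(2.51)]{DePM15} in the truncated ball. The paper's own argument is precisely this, stated more tersely; your additional remarks on the measure-theoretic bookkeeping (identifying $\p_{rel}\Om_+\cap B^{n+1}_r(x_0)$ and the relative perimeter with $\mcH^n(\mcB_r(x_0))$) are correct and make explicit what the paper leaves implicit.
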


\subsection{Sobolev inequality}

To carry out the iteration argument, the following ($L^2$-) Sobolev inequality on anisotropic minimal graph with free boundary in the half-space is crucial:
\begin{proposition}\label{Prop:Sobolev-ineq}
Let $u$ be a $C^2$-solution to \eqref{defn:AMSE} on $\mbR^n_+$ satisfying the free boundary condition in the anisotropic sense \eqref{defn:anisotropic-free-bdry-Intro}.
There exists a positive constant $C_{n,F}$ (depending only on $n,F$), such that for any non-negative compactly supported Lipschitz function $\varphi$ on $\S$, and any fixed constant $r>0$, there holds
\eq{\label{ineq:Sobolev}
\left(\int_\S{\varphi}^\frac{2n}{n-1}\rd\mcH^n\right)^\frac{n-1}n
\leq C_{n,F}\left(\frac1r\int_\S\varphi^2\rd\mcH^n+r\int_\S\abs{\na\varphi}^2\rd\mcH^n\right).
}
\end{proposition}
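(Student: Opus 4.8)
The plan is to derive the $L^2$-Sobolev inequality \eqref{ineq:Sobolev} from the Michael–Simon inequality \eqref{ineq:MS-intro} by a Hölder-plus-absorption argument, using the two structural facts already prepared in the excerpt: the trace estimate \eqref{ineq:trace-intro} (coming from testing $X=-\varphi e_1$ in the first variation formula together with the lower bound $\langle\mu_F,-e_1\rangle\ge m_F$) and the stability inequality \eqref{ineq:stability-ineq} bounding $\int_\S\varphi^2\abs h^2$ by $C(F)\int_\S\abs{\na\varphi}^2$. The key point, as emphasized in the introduction, is that both of these let us kill or absorb the boundary term and the mean-curvature term that would otherwise appear; since $\S$ is anisotropic minimal, $H_F\equiv0$, but the \emph{classical} mean curvature $H$ need not vanish, which is exactly why we need the stability inequality to control $\abs H\le\sqrt n\,\abs h$.

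\medskip
\textbf{Step 1: Apply Michael–Simon to $\varphi^2$.} Plug $\varphi^2$ (a nonnegative compactly supported Lipschitz function on $\S$) into \eqref{ineq:MS-intro} to get
\eq{
\left(\int_\S\varphi^\frac{2n}{n-1}\rd\mcH^n\right)^\frac{n-1}n
\leq C_{MS}\left(\int_{\p\S}\varphi^2\rd\mcH^{n-1}+2\int_\S\varphi\abs{\na\varphi}\rd\mcH^n+\int_\S\varphi^2\abs H\rd\mcH^n\right).
}
\textbf{Step 2: Control the three terms on the right.} For the boundary term, apply the trace estimate \eqref{ineq:trace-intro} to $\varphi^2$: $\int_{\p\S}\varphi^2\le C(F)\int_\S\abs{\na(\varphi^2)}=2C(F)\int_\S\varphi\abs{\na\varphi}$. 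For the mean-curvature term, use $\abs H\le\sqrt n\,\abs h$ and Cauchy–Schwarz, $\int_\S\varphi^2\abs H\le\sqrt n\left(\int_\S\varphi^2\abs h^2\right)^{1/2}\left(\int_\S\varphi^2\right)^{1/2}$, then invoke the stability inequality \eqref{ineq:stability-ineq} to bound $\int_\S\varphi^2\abs h^2\le C(F)\int_\S\abs{\na\varphi}^2$; thus $\int_\S\varphi^2\abs H\le C(n,F)\left(\int_\S\abs{\na\varphi}^2\right)^{1/2}\left(\int_\S\varphi^2\right)^{1/2}$. The middle term $\int_\S\varphi\abs{\na\varphi}$ is handled the same way by Cauchy–Schwarz. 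So altogether the right-hand side is bounded by $C(n,F)\left(\int_\S\abs{\na\varphi}^2\right)^{1/2}\left(\int_\S\varphi^2\right)^{1/2}$.

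\medskip
\textbf{Step 3: Young's inequality with the scaling parameter $r$.} Apply Young's inequality $ab\le\frac12(r^{-1}a^2+r\,b^2)$ with $a=\left(\int_\S\varphi^2\right)^{1/2}$, $b=\left(\int_\S\abs{\na\varphi}^2\right)^{1/2}$ to turn the product into $\frac{1}{2r}\int_\S\varphi^2+\frac r2\int_\S\abs{\na\varphi}^2$, which gives exactly \eqref{ineq:Sobolev} with $C_{n,F}=\tfrac12 C(n,F)$. (The freedom to insert $r$ is automatic because the final inequality is invariant under $r\mapsto\lambda r$ up to rescaling both sides, so no extra work is needed here.) The main obstacle in this argument is really Step 2's first point — one must be sure that the trace estimate \eqref{ineq:trace-intro}, which the excerpt derives from the variational structure of the anisotropic \emph{free boundary} problem via $\langle\mu_F,-e_1\rangle\ge m_F>0$, is genuinely available; this is where the anisotropic free boundary hypothesis (as opposed to the naive Neumann condition \eqref{old_free}) is indispensable, and it is the reason the proof does not generalize to that setting. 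Everything else is routine once \eqref{ineq:trace-intro}, \eqref{ineq:stability-ineq}, and \eqref{ineq:MS-intro} are in hand.
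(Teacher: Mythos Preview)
Your proof is correct and follows essentially the same route as the paper: Michael--Simon applied to $\varphi^2$, the trace estimate \eqref{ineq:trace-intro} to absorb the boundary term, the stability inequality \eqref{ineq:stability-ineq} together with $\abs{H}\le\sqrt{n}\,\abs{h}$ to handle the mean-curvature term, and finally Cauchy--Schwarz/Young with the parameter $r$. The only cosmetic difference is that the paper first applies Michael--Simon and the trace estimate to $\varphi$ and then substitutes $\varphi\mapsto\varphi^2$, whereas you substitute first; the two orderings are equivalent.
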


To establish it we first prove some useful facts.
\begin{lemma}\label{Lem:mu_F-e_1-lower-bound}
Let $u$ be a $C^2$-solution to \eqref{defn:AMSE} on $\mbR^n_+$ satisfying the free boundary condition in the anisotropic sense \eqref{defn:anisotropic-free-bdry-Intro}.
Then there holds
\eq{
\left<\mu_F,-e_1\right>\geq m_F,\quad\text{ on }\p\S.
}
\end{lemma}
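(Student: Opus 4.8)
The plan is to compute $\langle \mu_F, -e_1\rangle$ directly from the relations already assembled in the excerpt and then apply the free boundary condition. First I would recall the defining formula $\mu_F = \langle\nu_F,\nu\rangle\mu - \langle\nu_F,\mu\rangle\nu$ together with the decomposition \eqref{eq:bar-nu-relation}, namely $-e_1 = \langle\mu,-e_1\rangle\mu + \langle\nu,-e_1\rangle\nu$. Pairing these gives
\[
\langle\mu_F,-e_1\rangle = \langle\nu_F,\nu\rangle\langle\mu,-e_1\rangle - \langle\nu_F,\mu\rangle\langle\nu,-e_1\rangle,
\]
which is exactly the identity $\langle\mu_F,-e_1\rangle = \langle\nu_F,\bar\nu\rangle$ recorded in \eqref{eq:mu_F-relation} (one recognizes the right-hand side as $\langle\nu_F,\bar\nu\rangle$ after substituting $\bar\nu = -\langle\nu,-e_1\rangle\mu + \langle\mu,-e_1\rangle\nu$). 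So the task reduces to bounding $\langle\nu_F,\bar\nu\rangle$ from below by $m_F$.

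Next I would use the free boundary condition in the anisotropic sense, $\langle\nu_F,e_1\rangle = 0$ on $\partial\S$, equivalently $\langle\nu_F,-e_1\rangle=0$. Since $\{\bar\nu,-e_1\}$ is an orthonormal basis of the normal space of $\partial\S$ in $\mbR^{n+1}$, and $\nu_F = \bar DF(\nu)$ is a vector whose component along $e_1$ vanishes, $\nu_F$ lies in the hyperplane $e_1^\perp$. Now $\bar\nu$ also lies in $e_1^\perp$ (it is the outer unit co-normal to $\partial\S$ inside $\partial\mbR^{n+1}_+ = e_1^\perp$), and moreover $\nu_F$ lies in the normal space of $\partial\S$ which within $e_1^\perp$ is one-dimensional and spanned precisely by $\bar\nu$. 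Hence $\nu_F = \langle\nu_F,\bar\nu\rangle\,\bar\nu$, so $|\langle\nu_F,\bar\nu\rangle| = |\nu_F| = |\bar DF(\nu)| \geq m_F$ by \eqref{ineq:bar-D-F-min-max}.

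The remaining point is the sign: I must check that $\langle\nu_F,\bar\nu\rangle$ is positive rather than negative, so that the inequality $\langle\mu_F,-e_1\rangle \geq m_F$ holds (not merely $|\langle\mu_F,-e_1\rangle|\geq m_F$). This follows from the orientation convention fixed just before \eqref{eq:mu-relation}: $\{\nu,\mu\}$ and $\{\bar\nu,-e_1\}$ induce the same orientation on the normal bundle of $\partial\S$. Using $\langle\nu_F,\nu\rangle = F(\nu) > 0$ and writing $\nu_F$ in the $\{\nu,\mu\}$ frame, one checks that the sign of $\langle\nu_F,\bar\nu\rangle$ agrees with that of $F(\nu)$, hence is positive; concretely, decomposing $\nu_F = F(\nu)\nu + \langle\nu_F,\mu\rangle\mu$ and substituting $\nu = \langle\mu,-e_1\rangle\bar\nu + \langle\nu,-e_1\rangle(-e_1)$, $\mu = -\langle\nu,-e_1\rangle\bar\nu + \langle\mu,-e_1\rangle(-e_1)$, the $(-e_1)$-component of $\nu_F$ must vanish (free boundary condition), which pins down $\langle\nu_F,\mu\rangle$ in terms of $F(\nu)$ and forces the $\bar\nu$-component to equal $F(\nu)(\langle\mu,-e_1\rangle^2+\langle\nu,-e_1\rangle^2)/\langle\mu,-e_1\rangle$ up to the sign bookkeeping, which is positive.

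The main obstacle is purely the sign/orientation bookkeeping in this last step: the magnitude bound $|\langle\nu_F,\bar\nu\rangle|\geq m_F$ is immediate, but one must be careful that the orientation conventions in \eqref{eq:mu-relation}–\eqref{eq:bar-nu-relation} and the definition of $\mu_F$ are threaded consistently so the lower bound comes out with the correct sign. Everything else is a one-line substitution using identities already proved in Section~\ref{Sec:2}.
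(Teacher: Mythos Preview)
Your main argument (the middle paragraph) has a genuine gap. You assert that $\nu_F$ lies in the normal space of $\partial\Sigma$, so that $\nu_F=\langle\nu_F,\bar\nu\rangle\,\bar\nu$ and hence $|\langle\nu_F,\bar\nu\rangle|=|\nu_F|\geq m_F$. But $\nu_F=\bar DF(\nu)$ in general has a nonzero component tangent to $\partial\Sigma$; the paper even says so explicitly right after introducing $\mu_F$. The free boundary condition kills only the $e_1$-component of $\nu_F$, not its $T(\partial\Sigma)$-component. With the tangential piece present one only gets $|\langle\nu_F,\bar\nu\rangle|\leq|\nu_F|$, the wrong direction for your purposes. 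So the bound $\geq m_F$ does not follow from $|\bar DF(\nu)|\geq m_F$.

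Interestingly, your final paragraph---which you present as a mere sign check---actually contains a valid path to the full inequality. The decomposition $\nu_F=F(\nu)\nu+\langle\nu_F,\mu\rangle\mu$ is wrong as written (the $T(\partial\Sigma)$-piece is missing), but since $\bar\nu\perp T(\partial\Sigma)$, pairing with $\bar\nu$ is unaffected by that omission. Your algebra then gives $\langle\nu_F,\bar\nu\rangle=F(\nu)\big(\langle\mu,-e_1\rangle^2+\langle\nu,-e_1\rangle^2\big)/\langle\mu,-e_1\rangle=F(\nu)/\langle\mu,-e_1\rangle$. Because $\mu$ is the \emph{outer} conormal and $\Sigma\subset\{x_1>0\}$, one has $0<\langle\mu,-e_1\rangle\leq1$, whence $\langle\nu_F,\bar\nu\rangle\geq F(\nu)\geq m_F$. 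So the result does follow---but from this computation, not from the $|\nu_F|$ argument you lead with.

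For comparison, the paper bypasses the $\{\nu,\mu,\bar\nu,e_1\}$ frame and works in graph coordinates: writing $\bar\nu=(0,-D^\ast u,1)/\sqrt{1+|D^\ast u|^2}$ and using Euler's identity $F(\nu)=\langle\nu_F,\nu\rangle$ together with $\langle\nu_F,e_1\rangle=0$, one reads off
\[
\langle\nu_F,\bar\nu\rangle \;=\; F(\nu)\,\frac{\sqrt{1+|Du|^2}}{\sqrt{1+|D^\ast u|^2}}\;\geq\;F(\nu)\;\geq\;m_F.
\]
This is the same identity as yours once one computes $\langle\mu,-e_1\rangle=\sqrt{1+|D^\ast u|^2}/\sqrt{1+|Du|^2}$; the paper's coordinate approach just makes the inequality $\langle\mu,-e_1\rangle\leq1$ transparent.
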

\begin{proof}
We denote by $\mbR^{n-1}$ the $(n-1)$-dimensional Euclidean space spanned by $\{x_2,\ldots,x_n\}$, and denote by $D^\ast$ the Euclidean gradient on $\mbR^{n-1}$.
Clearly $\p\S$ can be viewed as the graph of $u$ over $\mbR^{n-1}$, it follows that $\bar\nu$ (the co-normal to $\p\S$ in $\p\mbR^{n+1}_+$) has the following expression:
\eq{
\bar\nu(x)
=\frac{\left(0,-D^\ast u(x),1\right)}{\sqrt{1+\abs{D^\ast u(x)}^2}}.
}
In view of \eqref{eq:mu_F-relation} we just have to estimate $\left<\nu_F,\bar\nu\right>$.
To this end, we use \eqref{eq:<DF(z),z>=F(z)} to compute
\eq{
F(\nu)
=\left<\bar DF(\nu),\nu\right>
=&\left<\left(F_1(\nu),\ldots,F_{n+1}(\nu)\right),\frac{\left(-u_1,-D^\ast u,1\right)}{\sqrt{1+\abs{Du}^2}}\right>\\
\overset{\eqref{defn:anisotropic-free-bdry-Intro}}{=}&\left<\bar DF(\nu),\frac{\left(0,-D^\ast u,1\right)}{\sqrt{1+\abs{Du}^2}}\right>
=\left<\nu_F,\bar\nu\right>\frac{\sqrt{1+\abs{D^\ast u}^2}}{\sqrt{1+\abs{Du}^2}},
}
and hence
\eq{
\left<\nu_F,\bar\nu\right>
=\frac{\sqrt{1+\abs{Du}^2}}{\sqrt{1+\abs{D^\ast u}^2}}F(\nu)
\geq m_F.
}
We complete the proof.

\end{proof}

With this Lemma, we can establish a useful trace estimate:
\begin{lemma}\label{Lem:trace}
Let $u$ be a $C^2$-solution to \eqref{defn:AMSE} on $\mbR^n_+$ satisfying the free boundary condition in the anisotropic sense \eqref{defn:anisotropic-free-bdry-Intro}.
There exists a positive constant $C$, depending only on $F$, such that
for any non-negative compactly supported Lipschitz function $\varphi$ on $\S$, there holds
\eq{\label{ineq:trace-estimate}
\int_{\p\S}\varphi\rd\mcH^{n-1}
\leq C\int_\S\abs{\na\varphi}\rd\mcH^n.
}
\end{lemma}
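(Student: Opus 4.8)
The plan is to deduce \eqref{ineq:trace-estimate} from the anisotropic first variation formula (Proposition \ref{Prop:1st-variation-aniso}) by choosing the test vector field $X = -\varphi\, e_1$, exactly as outlined in the introduction. Since $\S$ is an anisotropic minimal graph with free boundary, $H_F \equiv 0$ on $\S$, so the bulk term $\int_\S H_F\langle X,\nu\rangle\,\rd\mcH^n$ drops out. With $X = -\varphi e_1$ one has $\bar D X = -e_1 \otimes \bar D\varphi$, hence $\bar{\rm div}\,X = -\langle \bar D\varphi, e_1\rangle$ and $\langle \nu, \bar D_{\bar DF(\nu)}X\rangle = -\langle \nu, e_1\rangle\langle \bar DF(\nu), \bar D\varphi\rangle = -\langle \nu,e_1\rangle\langle\nu_F,\bar D\varphi\rangle$. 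Therefore the left-hand side of \eqref{eq:1st-variation-anisotropic} becomes an integral over $\S$ of $F(\nu)$ times a quantity bounded (in absolute value) by a constant depending only on $F$ times $\abs{\bar D\varphi}$; only the tangential part $\na\varphi$ survives since $\varphi$ is defined on $\S$, and by \eqref{ineq:bar-nabla-D-relation-n+1} this is controlled by $C(F)\abs{\na\varphi}$. Meanwhile the boundary term is $\int_{\p\S}\langle X,\mu_F\rangle\,\rd\mcH^{n-1} = \int_{\p\S}\varphi\,\langle\mu_F,-e_1\rangle\,\rd\mcH^{n-1}$.

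The second ingredient is Lemma \ref{Lem:mu_F-e_1-lower-bound}, which gives the pointwise lower bound $\langle\mu_F,-e_1\rangle \geq m_F > 0$ along $\p\S$. Combining, since $\varphi\geq 0$,
\eq{
m_F\int_{\p\S}\varphi\,\rd\mcH^{n-1}
\leq \int_{\p\S}\varphi\,\langle\mu_F,-e_1\rangle\,\rd\mcH^{n-1}
= \int_\S F(\nu)\,{\rm div}_{\S,F}(-\varphi e_1)\,\rd\mcH^n
\leq C(F)\int_\S\abs{\na\varphi}\,\rd\mcH^n,
}
which is the claimed estimate after dividing by $m_F$ (recall we have normalized $m_F\geq 1$, but in any case $m_F$ depends only on $F$). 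One should note that Proposition \ref{Prop:1st-variation-aniso} is stated for $X\in C^1_c(\mbR^{n+1};\mbR^{n+1})$ whereas $\varphi$ is only Lipschitz and compactly supported \emph{on $\S$}; a standard approximation argument (mollify $\varphi$, extend to a neighborhood in $\mbR^{n+1}$, and pass to the limit using dominated convergence, the area growth bound \eqref{esti:graph-ball}, and the fact that all integrands are at worst $L^1$) handles this technical point.

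I do not expect a genuine obstacle here: the identity $H_F\equiv 0$ kills the interior term, the sign of $\langle\mu_F,-e_1\rangle$ is the content of the previous lemma, and the estimate $\abs{{\rm div}_{\S,F}X}\leq C(F)\abs{\na\varphi}$ for $X=-\varphi e_1$ is a direct computation using \eqref{ineq:bar-D-F-min-max} and \eqref{ineq:bar-nabla-D-relation-n+1}. The only place requiring mild care is the regularity/approximation step needed to justify testing the first variation formula against a field built from a merely Lipschitz function on $\S$; this is routine but should be mentioned for completeness.
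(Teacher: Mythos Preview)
Your proposal is correct and follows essentially the same approach as the paper's own proof: test the anisotropic first variation formula with $X=-\varphi e_1$, use $H_F\equiv 0$ to kill the interior term, bound the $F$-divergence integrand by $C(F)\abs{\na\varphi}$, and invoke Lemma~\ref{Lem:mu_F-e_1-lower-bound} for the lower bound $\langle\mu_F,-e_1\rangle\geq m_F$ on the boundary. The only cosmetic difference is that the paper makes the passage from $\bar D\varphi$ to $\na\varphi$ explicit by choosing an extension of $\varphi$ to $\mbR^{n+1}$ with $\langle\bar D\varphi,\nu\rangle=0$ on $\S$, whereas you argue (correctly) that the $F$-divergence only sees the tangential part; both arguments lead to the same bound, and your reference to \eqref{ineq:bar-nabla-D-relation-n+1} is unnecessary once one has $\bar D\varphi=\na\varphi$ on $\S$.
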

\begin{proof}
First we consider non-negative $\varphi\in C^1(\S)$ with compact support, and extend it to a $C^1$-function on $\mbR^{n+1}$, still denoted by $\varphi$, such that $\left<\bar D\varphi,\nu\right>=0$ on $\S$.

Letting $X=-\varphi e_1$ in \eqref{eq:1st-variation-anisotropic}, we obtain
\eq{\label{sobo1}
\int_\S F(\nu)\left(\bar{\rm div}X-\frac{\left<\nu,\bar D_{\bar DF(\nu)}X\right>}{F(\nu)}\right)\rd\mcH^n
=\int_{\p\S}\varphi\left<\mu_F,-e_1\right>\rd\mcH^{n-1}.
}
By direct computations, we  get
\eq{
F(\nu)\left(\bar{\rm div}X-\frac{\left<\nu,\bar D_{\bar DF(\nu)}X\right>}{F(\nu)}\right)
=&-F(\nu)\left<\bar D\varphi,e_1\right>+\left<\nu,e_1\right>\left<\bar DF(\nu),\bar D\varphi\right>\\
=&-F(\nu)\left<\na\varphi,e_1\right>+\left<\nu,e_1\right>\left<\bar DF(\nu),\na\varphi\right>
\leq C(F)\abs{\na\varphi}.
}
Together with Lemma \ref{Lem:mu_F-e_1-lower-bound} and \eqref{sobo1},  we have thus shown
\eq{
\int_{\p\S}\varphi\rd\mcH^{n-1}
\leq C(F)\int_\S\abs{\na\varphi}\rd\mcH^n.
}
For non-negative $\varphi$ which is Lipschitz on $\S$ with compact support, the  estimate holds by approximation.

\end{proof}

\begin{proof}[Proof of Proposition \ref{Prop:Sobolev-ineq}]
By the classical Michael-Simon inequality on $\S$, and the Cauchy-Schwarz inequality:
\eq{
\left(\int_\S{\varphi}^\frac{n}{n-1}\rd\mcH^n\right)^\frac{n-1}n
\leq C_{MS}\left(\int_{\p\S}\varphi\rd\mcH^{n-1}+\int_\S\abs{\na \varphi}\rd\mcH^n+\int_\S \varphi\abs{H}\rd\mcH^n\right),
}
where $C_{MS}>0$ is a constant depending only on $n$.
Using the trace estimate \eqref{ineq:trace-estimate}, we obtain
\eq{
\left(\int_\S{\varphi}^\frac{n}{n-1}\rd\mcH^n\right)^\frac{n-1}n
\leq C(n,F)\left(\int_\S\abs{\na \varphi}\rd\mcH^n+\int_\S \varphi\abs{H}\rd\mcH^n\right)
}
for some constant $C(n,F)>0$.
Replacing $\varphi$ by $\varphi^2$, then using 
the Cauchy-Schwarz inequality, we find (with $r>0$ fixed)
\eq{
\left(\int_\S{\varphi}^\frac{2n}{n-1}\rd\mcH^n\right)^\frac{n-1}n
\leq C(n,F)\left(\int_\S \varphi\abs{\na\varphi}\rd\mcH^n+\frac1r\int_\S\varphi^2\rd\mcH^n+r\int_\S\varphi^2\abs{H}^2\rd\mcH^n\right).
}
Note that the last integral can be estimated by the stability inequality \eqref{ineq:stability-ineq} 
as follows:
\eq{
\int_\S\varphi^2\abs{H}^2\rd\mcH^n
\leq n\int_\S\varphi^2\abs{h}^2\rd\mcH^n
\leq C(n,F)\int_\S\abs{\na\varphi}^2\rd\mcH^n.
}
Hence the assertion follows from the Cauchy-Schwarz inequality.

\end{proof}

\subsection{Iteration argument}
For a measurable function $\phi$ on $\S$ and any $k>0$, we put
\eq{\label{defn:norm-f_k,r}
\norm{\phi}_{k,r,x_0}
\coloneqq\left(\fint_{\mcB_r(x_0)}\abs{\phi}^k\right)^\frac1k
=\left(\frac{\int_{\mcB_r(x_0)}\abs{\phi}^k}{\mcH^n(\mcB_r(x_0))}\right)^\frac1k.
}

We emphasize again that, in view of Remark \ref{Rem:tilde-F}, we may assume without loss of generality that the uniformly elliptic integrand $F$ satisfies $m_F\geq1$ (so that $\log W_f\geq0$), otherwise we could simply replace $F$ by $\tilde F$.

\begin{proposition}\label{Prop:Mean-value-ineq}
Let $u$ be a $C^2$-solution to \eqref{defn:AMSE} on $\mbR^n_+$ satisfying the free boundary condition in the anisotropic sense \eqref{defn:anisotropic-free-bdry-Intro}.
Let $x_0\in\overline{\mbR^n_+}$ and suppose that $u(x_0)=0$.
For any $k>0$, there exists a positive constant $\mfC_F=\mfC_F(n,k,F)$, such that for any $r>0$,
\eq{\label{ineq:Mean-value-ineq}
\norm{\log W_f}_{\infty,\frac{r}2,x_0}
\leq\mfC_F\norm{\log W_f}_{k,r,x_0}.
}
\end{proposition}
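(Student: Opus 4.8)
The plan is to run a Moser iteration for the weighted $F$-subharmonic function $\log W_f$ on the graph ball $\mcB_r(x_0)$, using the three structural inputs assembled so far: the differential inequality \eqref{eq:laplace-log-v}, the vanishing of the co-normal derivative on $\p\S$ from Lemma \ref{Lem:bdry-tangential-property}, and the (boundary-term-free) Sobolev inequality of Proposition \ref{Prop:Sobolev-ineq}, together with the uniform area bounds \eqref{esti:graph-ball}. Write $w=\log W_f\ge0$ (using the normalization $m_F\ge1$ from Remark \ref{Rem:tilde-F}). The first step is the Caccioppoli-type estimate: for $p\ge1$ and a cutoff $\varphi$ supported in $\mcB_{r}(x_0)$, test the inequality $\div_\S(F^2(\nu)\na_F w)\ge F^2(\nu)g(\na w,\na_F w)$ against $\varphi^2 w^{p}$ (replacing $w$ by $w+\ep$ and letting $\ep\to0$ if one needs to avoid the degeneracy at $w=0$, or simply using $\bar w = w$ since $w\ge 0$). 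Integrating by parts, the boundary term $\int_{\p\S}F^2(\nu)\varphi^2 w^p\, g(\na_F w,\mu)\rd\mcH^{n-1}$ vanishes by \eqref{eq:na-v-mu=0}, and using \eqref{eq:psi-range} ($F^2(\nu)\in[m_F^2,M_F^2]$) together with \eqref{defn:lambda-Lambda_F} to compare $g(\na\cdot,\na_F\cdot)$ with $\abs{\na\cdot}^2$, Young's inequality absorbs the cross term and yields
\eq{
\int_\S\varphi^2\abs{\na(w^{\frac{p+1}2})}^2\rd\mcH^n
\leq C(n,F)(p+1)^2\int_\S\abs{\na\varphi}^2 w^{p+1}\rd\mcH^n.
}

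The second step feeds this into the Sobolev inequality of Proposition \ref{Prop:Sobolev-ineq} applied to $\psi=\varphi w^{(p+1)/2}$, which (crucially) has no boundary term, to obtain a reverse-Hölder inequality: with $\chi=\frac{n}{n-1}>1$, for concentric graph balls $\mcB_{\rho}(x_0)\subset\mcB_{\rho'}(x_0)\subset\mcB_r(x_0)$ and $q=p+1\ge1$,
\eq{
\norm{w}_{\chi q,\rho,x_0}
\leq\left(\frac{C(n,F)\, q^2\, r^2}{(\rho'-\rho)^2}\cdot\frac{\mcH^n(\mcB_{\rho'}(x_0))}{\mcH^n(\mcB_\rho(x_0))}\right)^{\frac1q}\norm{w}_{q,\rho',x_0},
}
where one chooses the free parameter in the Sobolev inequality to be of order $r$ and uses the area bounds \eqref{esti:graph-ball} to control the ratio $\mcH^n(\mcB_{\rho'})/\mcH^n(\mcB_\rho)$ by a dimensional constant. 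Iterating over the radii $\rho_j=\frac r2(1+2^{-j})$ and exponents $q_j=k\chi^j$ starting from $q_0=k$ (first dealing with the range $k<1$ by a preliminary step or by noting the iteration works for any starting exponent), the exponents $\sum_j \chi^{-j}\log(Cq_j^2 4^j)$ converge, and we arrive at
\eq{
\norm{w}_{\infty,\frac r2,x_0}
\leq\mfC_F(n,k,F)\,\norm{w}_{k,r,x_0},
}
which is \eqref{ineq:Mean-value-ineq}.

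The one point demanding care is that all constants in the Caccioppoli and Sobolev steps must be \emph{scale-invariant} — independent of $r$ — so that the geometric series in the iteration sums to a bound depending only on $n$, $k$, $F$. This is exactly why Proposition \ref{Prop:Sobolev-ineq} is stated with the free parameter $r$: choosing it to match the radius of the current annulus makes the inequality homogeneous of the correct degree, and the area bounds \eqref{esti:graph-ball} of Proposition \ref{Prop:graph-ball-EVG} (which are themselves scale-invariant) convert averages on different balls into each other with controlled constants. I expect the main obstacle to be the bookkeeping that the product $\prod_j(\cdots)^{1/q_j}$ of constants stays bounded — i.e., verifying $\sum_j \chi^{-j}(\log q_j + \log(\rho_{j-1}-\rho_j)^{-2})<\infty$ — together with handling the starting exponent $k$ (especially $0<k<1$), which is routine but is where the dependence $\mfC_F=\mfC_F(n,k,F)$ genuinely enters. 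The vanishing boundary term, which would otherwise be the serious difficulty in the free-boundary setting, has already been disposed of by Lemma \ref{Lem:bdry-tangential-property}, so the remaining argument is the classical De Giorgi–Nash–Moser iteration adapted to the weighted operator $\div_\S(F^2(\nu)\na_F\,\cdot\,)$.
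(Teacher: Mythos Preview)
Your proposal is correct and follows essentially the same approach as the paper: both run the De Giorgi--Nash--Moser iteration by testing \eqref{eq:laplace-log-v} against $\xi^2(\log W_f)^{2l-1}$, using Lemma \ref{Lem:bdry-tangential-property} to kill the boundary term, then feeding the resulting Caccioppoli inequality into the Sobolev inequality of Proposition \ref{Prop:Sobolev-ineq} and the area bounds \eqref{esti:graph-ball}. Two minor remarks: (i) in the paper the Caccioppoli constant actually comes out \emph{independent} of the exponent $l$ (the factor $\tfrac{4l}{3l-2}\le 4$), so your $(p+1)^2$ is pessimistic, though harmless since you correctly note $\sum_j\chi^{-j}\log q_j<\infty$; (ii) for $0<k<2$ the paper does not start the iteration at $q_0=k$ but instead first proves the case $k=2$ and then uses the interpolation $\norm{w}_{2,r}\le\norm{w}_{k,r}^{k/2}\norm{w}_{\infty,r}^{1-k/2}$ together with a second iteration in the radii --- your parenthetical ``preliminary step'' is indeed needed, since the Caccioppoli step requires $p\ge1$.
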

\begin{proof}
We follow the classical De Giorgi-Nash-Moser iteration argument.

For any $r>s>0$, consider a cut-off function $\xi$ supported on $B^{n+1}_{r+s,+}(x_0)$, with $\xi\equiv1$ on $B^{n+1}_{r+\frac{s}2,+}(x_0)$ 
, and $\abs{\bar D\xi}\leq\frac{4}s$  on $B^{n+1}_{r+s,+}(x_0)\setminus B^{n+1}_{r+\frac{s}2,+}(x_0)$.

By \eqref{eq:laplace-log-v} we have
${\rm div}_\S(F^2(\nu)\na_F\log W_f)\geq0$.
For any $l\geq1$ we compute using integration by parts and \eqref{eq:na-v-mu=0}:
\eq{
0
\geq&-\int_\S(\log W_f)^{2l-1}\xi^2{\rm div}_\S(F^2(\nu)\na_F\log W_f)\\
=&(2l-1)\int(\log W_f)^{2l-2}\xi^2
g\left(\na\log W_f,\na_F\log W_f\right)F^2(\nu)+2\int(\log W_f)^{2l-1}\xi g\left(\na\xi,\na_F\log W_f\right)F^2(\nu)\\
\geq&(2l-1)\int(\log W_f)^{2l-2}\xi^2
g\left(\na\log W_f,\na_F\log W_f\right)F^2(\nu)\\
&-\frac{l}2\int(\log W_f)^{2l-2}\xi^2
g\left(\na\log W_f,\na_F\log W_f\right)F^2(\nu)
-\frac2l\int(\log W_f)^{2l}
g\left(\na\xi,\na_F\xi\right)F^2(\nu)\\
=&\left(\frac32l-1\right)\int(\log W_{ f})^{2l-2}\xi^2
g\left(\na\log W_f,\na_F\log W_f\right)F^2(\nu)
-\frac2l\int(\log W_f)^{2l}
g\left(\na\xi,\na_F\xi\right)F^2(\nu),
}
which yields
\eq{\label{ineq:Ding24-(4.5)}
\lambda_{F}m_{F}^2\int\abs{\na(\log W_f)^l}^2\xi^2
\leq&m_{F}^2\int
g\left(\na(\log W_f)^l,\na_F(\log W_f)^l\right)\xi^2\\
\leq&\int
g\left(\na(\log W_f)^l,\na_F(\log W_f)^l\right)\xi^2F^2(\nu)\\
\leq&\frac{4l}{3l-2}\int(\log W_f)^{2l}
g\left(\na\xi,\na_F\xi\right)F^2(\nu)
\leq4\Lambda_{F}M_{F}^2\int(\log W_f)^{2l}\abs{\bar D\xi}^2,
}
where we have used \eqref{ineq:bar-nabla-D-relation-n+1} to switch from $\abs{\na(\log W_{f})^l}^2$ to 
$g\left(\na(\log W_f)^l,\na_F(\log W_f)^l\right)$
and from 
$g\left(\na\xi,\na_F\xi\right)$ to $\abs{\bar D\xi}^2$.

Letting $\varphi=(\log W_f)^l\xi$ in the ($L^2$-) Sobolev inequality \eqref{ineq:Sobolev}, we deduce (recalling the definition of $\xi$)
\eq{
\left(\int_{\mcB_r(x_0)}(\log W_f)^{2l\frac{n}{n-1}}\right)^\frac{n-1}n
\leq&C(n,F)\left(\frac1r\int_{\mcB_{r+s}(x_0)}(\log W_f)^{2l}+r\int\abs{\na(\log W_f)^l}^2\xi^2+r\int(\log W_f)^{2l}\abs{\na\xi}^2\right)\\
\overset{\eqref{ineq:Ding24-(4.5)}}{\leq}&C(n,F)\left(\frac1r\int_{\mcB_{r+s}}(\log W_f)^{2l}+\frac{r}{s^2}\int_{\mcB_{r+s}}(\log W_f)^{2l}\right).
}
By \eqref{esti:graph-ball},
it then follows that (noting that $s<r<r+s<2r$)
\eq{
\norm{(\log W_f)^{2l}}_{\frac{n}{n-1},r,x_0}
\leq&C(n,F)\left(r^{-n}\int_{\mcB_{r+s}(x_0)}(\log W_f)^{2l}+\frac{r^2}{s^2}r^{-n}\int_{\mcB_{r+s}(x_0)}(\log W_f)^{2l}\right)\\
\leq&\underbrace{C(n,F)}_{\eqqcolon\mfc_F}\frac{r^2}{s^2}\norm{(\log W_f)^{2l}}_{1,r+s,x_0}.
}
In other words,
\eq{\label{ineq:Ding21-(4-6)}
\norm{\log W_f}_{\frac{2nl}{n-1},r,x_0}
\leq\mfc_F^\frac1{2l}r^\frac1ls^{-\frac1l}\norm{\log W_f}_{2l,r+s,x_0}.
}
This is the starting step of the classical De Giorgi-Nash-Moser iteration, see e.g., \cite{GT01,HL11},
and one can now obtain this proposition by a standard argument. 
More precisely, one first shows that
\eq{\label{ineq:Ding21-(4-3)}
\norm{\log W_f}_{\infty,\de r,x_0}
\leq C(n,k,F)(1-\de)^{-\frac{2n}k}\norm{\log W_f}_{k,r,x_0}
}
for any $k\geq2$ and $\de\in(0,1)$.
This is done by replacing $l,s,r$ in \eqref{ineq:Ding21-(4-6)} respectively by
\eq{
l_i
=\frac{k}2\left(\frac{n}{n-1}\right)^i,\quad
s_i=2^{-(1+i)}(1-\de)r,\quad 
r_i=r-\sum_{j=0}^is_j
=\de r+s_i\leq r,\quad\forall i\in\mbN\cup\{-1\}
}
and then iterating.

To conclude the proof, it is thus left to prove \eqref{ineq:Mean-value-ineq} for $k\in(0,2)$.
By \eqref{ineq:Ding21-(4-3)},
\eq{\label{ineq:Ding21-(4-12)}
\norm{\log W_f}_{\infty,\de r,x_0}
\leq C(n,F)(1-\de)^{-n}\norm{\log W_f}_{2,r,x_0}.
}
Replacing $r,\de$ in \eqref{ineq:Ding21-(4-12)} by
\eq{
r_0=\de r,\quad
r_i=\de r+\sum^i_{j=1}2^{-j}(1-\de)r
=r-2^{-i}(1-\de)r,\quad
\de_i=\frac{r_{i-1}}{r_i},\quad\forall i\in\mbN_+,
}
and noticing that for all $i\geq1$, $r_i\in(\frac12(1+\de)r,r)$,  hence by Proposition \ref{Prop:graph-ball-EVG}
\eq{
\mcH^n(\mcB_r(x_0))
\leq C^\star r^n
=C(n,F)(\frac{r}2)^n
\leq C(n,F)\mcH^n(\mcB_{r_i}(x_0)),\quad\forall i\in\mbN_+.
}
Also observe that
\eq{
1-\de_i
=\frac{r_i-r_{i-1}}{r_i}
=\frac{2^{-i}(1-\de)r}{r-2^{-i}(1-\de)r}
\geq2^{-i}(1-\de).
}
Hence,
using \eqref{ineq:Ding21-(4-12)}, for $i\geq1$ we find (note that $\de_ir_i=r_{i-1}$)
\eq{
\norm{\log W_f}_{\infty,r_{i-1},x_0}
\leq&C(n,F)(1-\de_i)^{-n}\norm{\log W_f}_{2,r_i,x_0}
\leq C(n,F)(1-\de_i)^{-n}\norm{\log W_f}^\frac{k}2_{k,r_i,x_0}\norm{\log W_f}^{1-\frac{k}2}_{\infty,r_i,x_0}\\
\leq&C(n,F)2^{in}(1-\de)^{-n}\norm{\log W_f}^{\frac{k}2}_{k,r,x_0}\norm{\log W_f}^{1-\frac{k}2}_{\infty,r_i,x_0}.
}
Iterating and letting $i\ra\infty$, we obtain for any $k\in(0,2)$ and $\de\in(0,1)$
\eq{
\norm{\log W_f}_{\infty,\de r,x_0}
\leq C(n,k,F)(1-\de)^{-\frac{2n}k}\norm{\log W_f}_{k,r,x_0},
}
which completes the proof.
\end{proof}

\section{Proof of the main results}\label{Sec:5}

\begin{proof}[Proof of Theorem \ref{Thm:gradient-estimate}]
We prove the required estimate at any $x_0\in\overline{\mbR^n_+}$.
After translation, we may always assume that  $u(x_0)=0$.
We continue to use the notations in Section \ref{Sec:4} (in particular, thanks to Remark \ref{Rem:tilde-F} we may assume WLOG $m_F\geq1$, so that $\log W_f\geq0$), and recall that the area elements $W,W_f$ are comparable (see \eqref{ineq:comparable}).

By Lemmas \ref{Lem:integral-estimate-2}, \ref{Lem:integral-estimate-3}, and area formula, we obtain (noting that the projection of $\mcB_r(x_0)=\S\cap B^{n+1}_r(x_0)$ onto $\mbR^n_+$ lies in $\left(B_{r,+}(x_0)\cap\{\abs{u}\leq r\}\right)$)
\eq{
\frac{m_{F}\int_{\mcB_r(x_0)}\log W_f}{\mcH^n(B_{r,+}(x_0))}
&\leq\fint_{B_{r,+}(x_0)\cap\left\{\abs{u}\leq r\right\}}(\log W_f)W_f\\
&\leq C(n,F)r^{-n}\int_{B_{2r,+}(x_0)\cap\{u\geq-2r\}}W_f
\leq C(n,F)\left(\mfc_1+\frac{\mfc_2}{2r}\sup_{B_{4r,+}(x_0)}u\right).
}
Now by the mean value inequality (Proposition \ref{Prop:Mean-value-ineq} with $k=1$ therein), in conjunction with \eqref{esti:graph-ball}, we get
\eq{
\log W_f(x_0)
\leq\left(\tilde{\mathscr{C}_1}+\frac{\tilde{\mathscr{C}_2}}r\sup_{B_{r,+}(x_0)}u\right),
}
for some positive constants $\tilde{\mathscr{C}_1}=\tilde{\mathscr{C}_1}(n,F)$, $\tilde{\mathscr{C}_2}=\tilde{\mathscr{C}_2}(n,F)$.
In view of \eqref{ineq:comparable},
the assertion readily follows.
\end{proof}

\begin{proof}[Proof of Theorem \ref{Thm:Liouville}]
Define the function on $\mbR^n_-=\{x\in\mbR^n:x_1<0\}$:
\eq{
\tilde u(x)
\coloneqq-u(-x).
}
We have
\eq{
D\tilde u(x)
=Du(-x),\quad\forall x\in\mbR^n_-,
}
and it follows from \eqref{defn:AMSE}, \eqref{defn:anisotropic-free-bdry-Intro} that the function $\tilde u$ solves
\eq{
{\rm div}(Df(D\tilde u(x)))=0\text{ on }\mbR^n_-,
}
and satisfies the boundary condition
\eq{
\left<Df(D\tilde u(x),-e_1)\right>=0,\quad\text{on }\p\mbR^n_-=\p\mbR^n_+.
}
Moreover, by \eqref{condi:linear-growth-negative-part} we clearly have
\eq{
\tilde u(x)<\beta(1+\abs{x}),\quad\forall x\in\mbR^n_-.
}
Applying Theorem \ref{Thm:gradient-estimate} to $\tilde u$ we find for any fixed $y\in\overline{\mbR^n_+}$
\eq{
\abs{Du(y)}
=\abs{D\tilde u(-y)}
\leq e^{\mathscr{C}_1+\frac{\mathscr{C}_2}r\left(\sup_{B_{r,-}(-y)}\tilde u-\tilde u(-y)\right)}
\leq e^{\mathscr{C}_1+{\mathscr{C}_2\beta}(1+\frac{1+\abs{y}}r)+{\mathscr{C}_2}\frac{u(y)}r},\quad\forall r>0.
}
Letting $r\ra\infty$, this yields for the fixed $y\in\overline{\mbR^n_+}$
\eq{
\abs{Du(y)}
\leq C(n,\beta,F).
}
Namely, we obtain the uniform gradient bound:
\eq{\label{esti:uniform-gradient-bounds}
\sup_{\overline{\mbR^n_+}}\abs{Du}
\leq C(n,\beta,F).
}
{From \eqref{esti:uniform-gradient-bounds} it is more or less standard to show that $u$ must be affine. For the convenience of the reader we provide the argument.}
First, after translation we may suppose that $u(0)=0$.
For any $R>1$, denote the rescaled function $u_R(x)\coloneqq\frac{u(Rx)}{R}, \forall x\in{\mbR^n_+}$.
    We clearly have
    \eq{
    Du_R(x)=Du(Rx),\quad\forall x\in B_{1,+}(0),
    }
    and it follows from \eqref{defn:AMSE}, \eqref{defn:anisotropic-free-bdry-Intro} that $u_R$ solves
    \eq{
    {\rm div}\left(Df(Du_R(x))\right)=0
\quad\text{on }B_{1,+}(0),
    }
    with boundary condition
    \eq{
    \left<Df(Du_R(x)),e_1\right>=0,\quad\text{on }\p B_{1,+}(0)\cap\p\mbR^n_+.
    }
    By \eqref{esti:uniform-gradient-bounds}, $\abs{Du_R}\leq C(n,\beta,F)$ in $\overline{B_{1,+}(0)}$.
    Moreover, by $u(0)=0$ and the gradient bound it is easy to see that
    \eq{
    \abs{u_R}(x)\leq C(n,\beta,F),\quad\forall x\in B_{1,+}(0).
    }

 Thanks to Proposition \ref{fF}, we observe that 
    \begin{itemize}
        \item $\abs{D^2f(Du_R)}_{C^0(B_1)}$ is bounded from above by some positive constant depending only on $n,F$. 
    \end{itemize}
   
    \begin{itemize}
        \item The eigenvalues of $D^2f(Du_R)$ in $B_1$ has positive lower bound depending only on $n,\beta,F$;
        \item The boundary condition \eqref{defn:anisotropic-free-bdry-Intro} is an oblique boundary condition, since 
        \eq{
        e_1D^2f(Du_R(x))e_1^T
        \geq C(n,\beta,F)>0.
        }
    \end{itemize}
    With these three properties,
    by standard H\"older estimates of quasilinear elliptic equations with oblique boundary conditions \cite{LT86}
    (in particular, Theorem 4.1 therein together with its proof), we obtain $$\|u_R\|_{C^{1,\alpha}(B_{\frac12,+}(0))}\le C,$$ where $C$ is a positive constant independent of $R$.
In particular, this yields $$\Abs{D u_R(x)-D u_R(0)}\le C\abs{x}^{\alpha}$$ for any $x\in B_{\frac12,+}(0)$ and thus, for any $y\in B_{\frac{R}2,+}(0)$,
\eq{
\Abs{D u(y)-D u(0)}
\le C\frac{\abs{y}^{\alpha}}{R^{\alpha}}.
}
For any fixed $y$, letting $R\rightarrow \infty$ we obtain $\Abs{D u(y)-D u(0)}=0$, and thus $u$ must be affine.
\end{proof}

\bibliography{BibTemplate.bib}
\bibliographystyle{amsplain}
\end{document}